\documentclass{amsart}

\usepackage{amssymb,amsfonts,amsmath,nccmath,amsthm}
\usepackage[all,arc]{xy}
\usepackage{tikz-cd}
\usepackage{tikz}
\usetikzlibrary{arrows,decorations}

\usepackage
[pdfauthor={Catherine Hsu, Alice Pozzi, Preston Wake, and Carl Wang-Erickson},
pdftitle={Eisenstein congruences in tame families and the class group of a metabelian extension},
bookmarks=false]
{hyperref}

\hypersetup{
  colorlinks   = true, 
  urlcolor     = blue, 
  linkcolor    = blue, 
  citecolor   = blue 
}

\usepackage{enumitem}
\usepackage{mathrsfs}
\usepackage{multicol}
\usepackage{xcolor,colortbl}
\usepackage{graphics}
\usepackage{color}

\setlist[enumerate]{leftmargin=2.5em}
\setlist[itemize]{leftmargin=2.5em}

\newcommand\define{\newcommand}

\define\rk{\mathrm{rk}}

\define\isoto{\xrightarrow{\sim}}

\newcommand{\dia}[1]{{\langle #1 \rangle}}

\newcommand{\ttmat}[4]{\left( \begin{array}{cc}
#1 & #2 \\
#3 & #4
\end{array}
\right)}

\newcommand{\Z}{\mathbb{Z}}
\newcommand{\Q}{\mathbb{Q}}

\newcommand{\F}{\mathbb{F}}

\newcommand{\m}{\mathfrak{m}}

\newcommand{\varep}{\varepsilon}

\newcommand{\Gal}{\mathrm{Gal}}

\newcommand{\Fr}{\mathrm{Fr}}

\newcommand{\red}{\mathrm{red}}

\define\Cl{{\mathrm{Cl}}}
\define\GL{{\mathrm{GL}}}

\define\SL{{\mathrm{SL}}}

\define{\Fitt}{\mathrm{Fitt}}
\define{\Ann}{\mathrm{Ann}}

\newtheorem{thm}{Theorem}

\newtheorem{prop}[thm]{Proposition}
\newtheorem{lem}[thm]{Lemma}

\newtheorem{conj}[thm]{Conjecture}

\theoremstyle{definition}
\newtheorem{defn}[thm]{Definition}

\newtheorem{eg}[thm]{Example}

\theoremstyle{remark}
\newtheorem{rem}[thm]{Remark}

\newcommand{\lra}{\longrightarrow}

\newcommand{\rinj}{\hookrightarrow}

\newcommand{\rsurj}{\twoheadrightarrow}

\DeclareMathOperator{\ad}{ad}

\newcommand{\bT}{\mathbb{T}}

\newcommand{\Db}{{\bar D}}

\newcommand{\Tr}{\mathrm{Tr}}

\newcommand{\sm}[4]{\ensuremath{\big(\begin{smallmatrix}#1 & #2 \\ #3 & #4\end{smallmatrix}\big)}}

\newcommand{\up}[1]{^{(#1)}}

\newcommand{\oQ}{\overline{\Q}}
\newcommand{\cS}{\mathcal{S}}

\newcommand{\ep}{\epsilon}

\newcommand{\ov}[1]{\overline{#1}}

\numberwithin{thm}{section}

\author{Catherine Hsu}
\address{Department of Mathematics and Statistics, Swarthmore College, Swarthmore, PA, 19081}
\email{chsu2@swarthmore.edu}

\author{Alice Pozzi}
\address{School of Mathematics, University of Bristol, Bristol, United Kingdom BS8 1TW}
\email{alice.pozzi@bristol.ac.uk}

\author{Preston Wake}
\address{Department of Mathematics, Michigan State University, East Lansing, MI 48824,USA}
\email{wakepres@msu.edu}

\author{Carl Wang-Erickson}
\address{Department of Mathematics, University of Pittsburgh, Pittsburgh, PA 15260, USA}
\email{carl.wang-erickson@pitt.edu}

\title[Eisenstein congruences and class groups]{Eisenstein congruences in tame families and the class group of a metabelian extension}

\begin{document}

\begin{abstract}
Ribet's 1976 proof of the converse to Herbrand's theorem created a new paradigm in algebraic number theory by illustrating that unramified abelian Galois extensions of number fields can be constructed using Galois representations associated to Eisenstein congruences, i.e., cuspidal eigenforms that are congruent to Eisenstein series. In recent years, further progress has been made in investigating the quantity of such Eisenstein congruences, showing that when there are many such congruences, additional and finer information can be gleaned about the arithmetic structure of the related extensions. In this vein, we use new results on the existence of Eisenstein congruences in tame families to explain a computational observation about splitting behavior of primes in a certain metabelian number field. 
\end{abstract}

\maketitle

\tableofcontents

\section{Introduction}
\label{sec: intro}

Eisenstein congruences played an integral role in both Ribet's converse to Herbrand's theorem \cite{ribet1976} as well as Mazur's celebrated torsion theorem \cite{mazur1978}: in the former, Ribet utilized Galois representations associated to Eisenstein congruences that arise from a prime dividing an $L$-value, and in the latter, Mazur studied Eisenstein congruences that arise from a prime dividing an Euler factor.
Ribet's technique for producing Galois cocyles using Eisenstein congruences has been fruitfully reinterpreted using the language of Galois deformation theory \cite{mazur1989}, notably by Bella\"iche and Chenevier \cite{BC2009}. Calegari and Emerton \cite{CE2005} applied Galois deformation theory to Mazur's original setting to give a partial answer to a question of Mazur about the number of Eisenstein congruences, showing that this number has deep arithmetic significance related to class numbers of certain number fields. A series of works by various subsets of the present authors have pushed this direction further, giving a complete answer to Mazur's question \cite{WWE3}, generalizing some of Mazur's results to certain squarefree levels \cite{WWE5}, analyzing a particularly pathological squarefree case in depth \cite{part1,part2}, and generalizing these results to include modular forms with nontrivial character \cite{HPWE}. In this paper, we use the abundance of Eisenstein congruences established in \cite{HPWE} together with techniques of \cite{part1} to explain a class-field-theoretic phenomenon that was observed computationally in \cite{part2}.

\subsection{Class fields and the rank of Mazur's Eisenstein ideal}
\label{subsec: intro Mazur}
Let $p, \ell_0 \ge 5$ be primes with $\ell_0 \equiv 1 \pmod{p}$. For a squarefree integer $n$, let $\bT_{\Gamma_0(n)}^\varep$ denote the $\Z_p$-Hecke algebra acting on weight-two modular forms of level $\Gamma_0(n)$ localized at the Eisenstein maximal ideal containing $p$ and $(\mathbf{w}_\ell+1)$, for each prime $\ell \mid n$, where $\mathbf{w}_\ell$  denotes the Atkin--Lehner involution at $\ell$. Also, let $\bT_{\Gamma_0(\ell_0)}^{\circ,\varep}$ be the maximal quotient acting faithfully on cusp forms.\footnote{This notation matches \cite{WWE5} in which $\varep$ denotes a choice of Atkin--Lehner signature; we primarily consider $\varep=(-1,-1)$ in this paper.} Mazur's theorem \cite{mazur1978} implies $\bT_{\Gamma_0(\ell_0)}^{\circ,\varep}$ is nonzero and free of finite rank as a $\Z_p$-module, and he asked if there is any arithmetic significance to its rank. Calegari and Emerton \cite{CE2005} proved that, if $\rk_{\Z_p}\bT_{\Gamma_0(\ell_0)}^{\circ,\varep} >1$, then $\dim_{\F_p} \Cl(\Q(\sqrt[p]{\ell_0}))[p] >1$. This dimension is at least one by genus theory:
letting $\zeta_{\ell_0}\up p$ denote a primitive element for the unique degree $p$ subextension of $\Q(\zeta_{\ell_0})/\Q$, one can show that $\Q(\sqrt[p]{\ell_0},\zeta_{\ell_0}\up p)/\Q(\sqrt[p]{\ell_0})$ is unramified. So, the condition on the class group can be interpreted as asking 
if there is any other unramified $p$-extension.

One of the main results of \cite{WWE3} can be interpreted as a more precise relation between the rank $\bT_{\Gamma_0(\ell_0)}^{\circ,\varep}$ and another class group.   Let $L/\Q(\zeta_p)$ be the unique Galois $p$-extension on which $\Gal(\Q(\zeta_p)/\Q)$ acts by the inverse $\omega^{-1}$ of the mod-$p$ cyclotomic character $\omega$, and in which $1-\zeta_p$ splits completely and only primes above $\ell_0$ ramify (this extension exists and is unique by \cite[Lem.\ 3.9]{CE2005}). Then, $L(\zeta_{\ell_0}\up p)/\Q(\zeta_{\ell_0}\up p,\zeta_p)$ is an unramified degree-$p$ Galois extension, and hence defines a nonzero element of $\Cl(\Q(\zeta_{\ell_0}\up p,\zeta_p))[p]_{\omega^{-1}}$. In \cite{WWE3}, it is proven that $\rk_{\Z_p}\bT_{\Gamma_0(\ell_0)}^{\circ,\varep} >1$ if and only if 
$\dim_{\F_p}\Cl(\Q(\zeta_{\ell_0}\up p,\zeta_p))[p]_{\omega^{-1}} >1.$

Both \cite{CE2005} and \cite{WWE3} work with Galois deformation rings as an intermediary. That is, a deformation ring 
is defined and proven to be isomorphic to $\bT_{\Gamma_0(\ell_0)}^\varep$. The two papers take different approaches to the definition of such a deformation ring, but the resulting analysis is similar in spirit; in the discussion below, we adopt the choice of $R_{\Gamma_0(\ell_0)}^\varep$ as in \cite{WWE3}. In accordance with Mazur's description of the tangent space of deformation rings in \cite{mazur1989}, homomorphisms $R_{\Gamma_0(\ell_0)}^\varep \to \F_p[\ep]/(\ep^2)$, referred to as tangent vectors, are related to $1$-cocycles in Galois cohomology. In both \cite{CE2005} and \cite{WWE3}, it is shown that the tangent space of $R_{\Gamma_0(\ell_0)}^\varep$ is one-dimensional (this is related to Mazur's theorem on the local principality of the Eisenstein ideal), and hence $\rk_{\Z_p}\bT_{\Gamma_0(\ell_0)}^{\circ,\varep} >1$ is equivalent to the existence of deformation of this tangent vector to homomorphism to $R_{\Gamma_0(\ell_0)}^\varep \rsurj \F_p[\ep]/(\ep^3)$. In \cite{CE2005}, explicit deformation theory is used to construct a new unramified $p$-extension of $\Q(\sqrt[p]{\ell_0})$ using such a deformation. In \cite{WWE3}, a strategy is developed wherein, if $R$ is a deformation ring, $\phi: R \rsurj \F_p[\ep]/(\ep^2)$ is a tangent vector, and $S$ is an Artinian local $\F_p$-algebra, then it is shown that there are obstructions to deforming $\phi$ to a homomorphism $R \to S$, and that these obstructions are described by cup products (and, more generally, Massey products) in Galois cohomology related to $1$-cocycles determined by $\phi$. (This strategy further developed and articulated in \cite{CarlA-inf}.) In particular, the existence of a surjective homomorphism $R_{\Gamma_0(\ell_0)}^\varep \rsurj \F_p[\ep]/(\ep^3)$ implies the vanishing of certain cup products, and it is shown that these cup products are related to the class groups $\Cl(\Q(\sqrt[p]{\ell_0}))[p]$ and $\Cl(\Q(\zeta_{\ell_0}\up p,\zeta_p))[p]_{\omega^{-1}}$. 

In this paper, we exploit the existence of a large supply of Eisenstein congruences to deduce finer properties of certain class groups, using surjections $R \rsurj S$ and vanishing of Massey products as intermediate steps. 
To produce sufficiently many Eisenstein congruences, we  leverage the fact that such congruences deform in tame---that is, group-ring-valued---families. This input can be obtained by studying congruences either at level $\Gamma_1(n)$, as in \cite{HPWE}, or $\Gamma_0(n^2)$, as in \cite{LPW} for a squarefree $n$. The two settings are related by twisting, as explained in \cite{LMP}.

\subsection{Adding a bad prime}
\label{subsec: intro add bad prime}
Henceforth, we assume that $\rk_{\Z_p}\bT_{\Gamma_0(\ell_0)}^{\circ,\varep} =1$ and consider what happens when a second prime is added to the level. Let $\ell_1$ be a prime with $\ell_1 \not \equiv \pm 1 \pmod{p}$ such that $\ell_1$ is a $p$th power modulo $\ell_0$, which means that $\ell_1$ is a `bad prime' in the sense of Mazur \cite{mazur1978}, i.e., $T_{\ell_1}-(\ell_1+1)$ does not generate the Eisenstein ideal of $\bT_{\Gamma_0(\ell_0)}^{\circ,\varep}$. With $N=\ell_0\ell_1$, a result of Ribet, recorded by Yoo in \cite{yoo2019}, guarantees that there is a newform of level $\Gamma_0(N)$ that is congruent to the Eisenstein series of level $\Gamma_0(N)$ characterized by having Atkin--Lehner signature $\varep=(-1,-1)$. This case is particularly interesting from the point of view of \cite{WWE5}, in that the associated Hecke algebra $\bT_{\Gamma_0(N)}^\varep$ may not be Gorenstein (in fact, we expect that it never is---in every computed example, it is not). 

In \cite{WWE5}, for squarefree levels $n$, a deformation ring $R_{\Gamma_0(n)}^\varep$ and a surjective $\Z_p$-algebra homomorphism $R_{\Gamma_0(n)}^\varep\rsurj \bT_{\Gamma_0(n)}^\varep$ are constructed. For many levels $n$, Wiles's numerical criterion is used to show that $R_{\Gamma_0(n)}^\varep = \bT_{\Gamma_0(n)}^\varep$ and that both rings are Gorenstein; when $\bT_{\Gamma_0(N)}^\varep$ is not Gorenstein, this technique cannot work for $n=N$.

Nonetheless, in \cite{part1}, a new strategy for proving $R_{\Gamma_0(N)}^\varep =\bT_{\Gamma_0(N)}^\varep$ is developed using upper bounds on the rank of $R_{\Gamma_0(N)}^\varep$. The rank of $\bT_{\Gamma_0(N)}^\varep$ is known to be at least $3$ (accounting for the newform, the cusp form of level $\ell_0$, and the Eisenstein series). Therefore, if one can show the rank of $R_{\Gamma_0(N)}^\varep$ is at most $3$, then the surjective map $R_{\Gamma_0(N)}^\varep \rsurj \bT_{\Gamma_0(N)}^\varep$ must be an isomorphism. 

The main result of \cite{part1,part2} gives an upper bound on the rank of $R_{\Gamma_0(N)}^\varep$ in terms of class-field-theoretic conditions. Let $M=L(\sqrt[p]{\ell_1})$, and let $G=\Gal(M/\Q) \simeq (\F_p \times \F_p) \rtimes \F_p^\times$. For a finite set $S$ of primes of $M$, let $\Cl_S(M)$ be the $S$-class group of $M$ and let $\Cl_S(M)_{p,G}$ denote the largest quotient on which $p$ and $G$ act trivially. By class field theory, $\Cl_S(M)_{p,G}$ equals the Galois group of the maximal unramified elementary-abelian $p$-extension of $M$ that is Galois over $\Q$ and in which all primes in $S$ split completely. Let $T_0$ be the set of primes dividing $\ell_0$ and let $T$ be the set of primes dividing $N$, so that there is a natural surjective map $\Cl_{T_0}(M) \to \Cl_{T}(M)$. In \cite{part1,part2}, particularly \cite[Thm.\ 1.1.1]{part1} and \cite[Prop.\ 6.1.2]{part2}, it is shown that if $\dim_{\F_p} R_{\Gamma_0(N)}^\varep/pR_{\Gamma_0(N)}^\varep>3$, then 
\begin{equation}
\label{eq: a1 at ell1 CFT}
    \Cl_{T_0}(M)_{p,G} \to \Cl_{T}(M)_{p,G} \quad \text{ is injective}.
\end{equation}
In other words, if $M'/M$ denotes the maximal unramified elementary-abelian $p$-extension of $M$ that is Galois over $\Q$ and has all primes in $T_0$ split completely, then \eqref{eq: a1 at ell1 CFT} says that all primes in $T$ split completely in $M'$. This result is in line with the general strategy relating the size of deformation rings to the vanishing of Massey product because, as is shown in \cite{part2}, \eqref{eq: a1 at ell1 CFT} is equivalent to the vanishing of a certain triple Massey product in Galois cohomology.

In fact, in \cite{part1}, an invariant $\alpha^2+\beta \in \F_p$ is defined, and it is shown that  $\dim_{\F_p} R_{\Gamma_0(N)}^\varep/pR_{\Gamma_0(N)}^\varep >3$ if and only if both \eqref{eq: a1 at ell1 CFT} holds and $\alpha^2+\beta$ is zero. The definition of the invariant $\alpha^2+\beta$ is not important for this paper, except to say that it is derived using the ``unramified-or-Steinberg at $\ell_0$" property of Galois representations. In terms of modular forms, this means that it is intimately related to having level $\Gamma_0(N)$ (as opposed to some deeper level). On the other hand, \eqref{eq: a1 at ell1 CFT} is derived just from the existence of a certain Galois representation, without any particular constraint on the level. In \cite[\S5.3]{part2}, examples are computed showing that $\alpha^2+\beta$ is not always zero (and the computations suggest that it typically is not). However, \eqref{eq: a1 at ell1 CFT} is shown to hold in every example. The goal of this paper is to prove that \eqref{eq: a1 at ell1 CFT} always holds, assuming a conjecture of \cite{HPWE} about modular forms of deeper level. Since \eqref{eq: a1 at ell1 CFT} is known if $\rk_{\Z_p} \bT_{\Gamma_0(N)}^{\circ,\varep} >2$ by the results of \cite{part1,part2}, we assume for the remainder of the paper that 
\begin{equation}
\label{eq: min rk}
\rk_{\Z_p} \bT_{\Gamma_0(N)}^{\circ,\varep} =2.
\end{equation}

\subsection{Relaxing the condition at $\ell_0$}
\label{subsec: intro relax ell0}
In this paper, we work with a new deformation ring $R$. It is similar to $R_{\Gamma_0(N)}^\varep$, but the condition at $\ell_0$ is relaxed, so that it allows for Galois representations associated to cusp forms of level $\Gamma_0(N\ell_0)$ (or, by twisting, forms of level $\Gamma_1(N)$), and there is a surjective map $R \to R_{\Gamma_0(N)}^\varep$. Let $J^\red \subset R$ be the ideal such that $R/J^\red$ represents the deformations that remain reducible.
In \cite[\S4.3]{part1}, a distinguished tangent vector $R_{\Gamma_0(N)}^\varep \rsurj \F_p[\epsilon]/(\epsilon^2)$ is defined that is irreducible, in the sense that it does not send $J^\red$ to zero, and whose corresponding Galois representation cuts out $M'/\Q$; this defines an irreducible tangent vector $\phi: R \rsurj \F_p[\ep]/(\ep^2)$ of $R$ by precomposition.

The strategy is to show that, for an Artinian local ring $S$, the existence of a surjection $R \rsurj S$ deforming $\phi$ implies the vanishing of a certain triple Massey product. For our purposes, we define $S$ to be a ring of the form 
\begin{equation}
\label{eq: define S}
S = \frac{\F_p[y,z]}{(y^3,y^2-\mu z^2,yz)}
\end{equation}
for some $\mu \in \F_p^\times$
(we suppress dependence on $\mu$ from the notation); note that the $\F_p$-dimension of $S$ is $4$.
Let $\phi_S : S \to \F_p[\ep]/(\ep^2)$ be the tangent vector given by $\phi_S(y) =\ep$, $\phi_S(z) = 0$. Crucially,  as $y^2 \ne 0$ in $S$, the existence of a map $\pi: R \rsurj S$ such that $\phi_S \circ \pi =\phi$ is related to vanishing of triple Massey products of the cocycles used to define $\phi$ , as follows from the general strategy of \cite{WWE3,CarlA-inf}.

\begin{thm}
    \label{thm: main}
    Let $S$ be as in \eqref{eq: define S} and assume that $\rk_{\Z_p} \bT_{\Gamma_0(\ell_0)}^\circ=1$. Suppose that there is a surjection $\pi:R \rsurj S$ such that $\phi_S \circ \pi$ factors through $R_{\Gamma_0(N)}^\varep$ and $\pi(J^\red)=yS$. 
    Then the map $\Cl_{T_0}(M)_{p,G} \to \Cl_{T}(M)_{p,G}$ is injective. 
\end{thm}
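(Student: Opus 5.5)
We pull back the universal pseudodeformation of $R$ along $\pi$ to a pseudorepresentation (Cayley--Hamilton algebra) over $S$, write it in generalized matrix algebra form, and read off a Massey-product relation. This relation is then translated into the splitting statement \eqref{eq: a1 at ell1 CFT} using the dictionary of \cite{part2}.

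\emph{Step 1: the GMA over $S$.} Pushing the universal Cayley--Hamilton algebra of $R$ forward along $\pi$ gives a generalized matrix algebra $E_S=\sm{S}{B}{C}{S}$ with diagonal characters $\psi_1,\psi_2$ lifting $1$ and $\omega^{-1}$ (up to twist). The reducibility ideal of $E_S$ is the image of $BC$ in $S$. By hypothesis this image is $\pi(J^\red)=yS=\F_p y\oplus\F_p y^2$. Because $\phi_S\circ\pi=\phi$ is the distinguished irreducible tangent vector of \cite[\S4.3]{part1}, we can choose generators $b\in B$ and $c\in C$ with $b c=y$. Reducing modulo $(y)$, we have $S/yS=\F_p[z]/(z^2)$ and $E_S$ becomes reducible, so the diagonal characters deform over $\F_p[z]/(z^2)$. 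This gives cocycles: the $z$-coefficients of $\psi_1\psi_2^{-1}$ yield a class $a$. Since the condition at $\ell_0$ is relaxed, $a$ may be ramified at $\ell_0$ (a tame family direction), but it is controlled at $\ell_1$. Modulo $\m_S^2$ we also obtain classes $b_1$ (the Kummer class of $\ell_1$ and $\zeta_{\ell_0}\up p$) and $c_1$ (the class cutting out $L$), with $b_1\cup c_1=0$.

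\emph{Step 2: the Massey relation.} We expand the multiplication in $E_S$ to second order. The key input is the relation $y^2=\mu z^2\neq 0$ together with $yz=0$. Writing the off-diagonal entries to second order, $b=b_1+b_2$ and so on, the GMA identities in degree $2$ give a defining system for a triple Massey product $\langle b_1,c_1,b_1\rangle$ (or $\langle c_1,b_1,c_1\rangle$), with the $z$-direction diagonal class $a$ serving as a correction term. The identity $yz=0$ kills the cross terms between $a$ and $b_1,c_1$. The identity $y^2=\mu z^2$ forces the obstruction class to equal $\mu\cdot a\cup a$-type terms, and these vanish in the relevant cohomology group. Hence the Massey product vanishes for a defining system that is unramified-or-Steinberg at $\ell_1$ and unrestricted at $\ell_0$. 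The requirement that $\phi_S\circ\pi$ factor through $R^\varep_{\Gamma_0(N)}$ pins down the first-order data at $\ell_0$. The hypothesis $\rk\bT_{\Gamma_0(\ell_0)}^\circ=1$ then guarantees that this first-order data is exactly the distinguished one. It also implies that the reducible direction $z$ does not come from level $\ell_0$, which makes the decomposition into $y$- and $z$-parts canonical.

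\emph{Step 3: translation to class groups.} By \cite{part2}, \eqref{eq: a1 at ell1 CFT} is equivalent to the vanishing of a specific triple Massey product with local conditions. The main obstacle is checking that the local conditions obtained in Step 2 match those of \cite{part2}. The relaxed condition at $\ell_0$ might permit extra ramification, and one must show that this ramification does not change the Massey product class. My first approach is a local computation at $\ell_0$. Since $\ell_0\equiv 1\pmod p$, the tame inertia contributions have the form $\omega^{-1}$-twisted Kummer classes. Their cup products with $b_1$ restricted to $\ell_0$ vanish because $\ell_1$ is a $p$th power modulo $\ell_0$, which is exactly the bad-prime hypothesis. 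Therefore the Massey product computed from $E_S$ agrees with the one in \cite[Prop.\ 6.1.2]{part2}, and its vanishing gives injectivity of $\Cl_{T_0}(M)_{p,G}\to\Cl_T(M)_{p,G}$.
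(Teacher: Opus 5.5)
Your outer framework is the paper's: push the universal Cayley--Hamilton GMA forward along $\pi$, read a triple Massey relation off the homomorphism condition, and conclude with Proposition~\ref{prop: a1 vanishing equivalences}. However, Step~2 is the entire content of the theorem, and it is not an argument; the mechanism you describe is also not the one that operates. The class you must kill is $[a\up1\smile b\up1+b\up1\smile d\up1]\in H^2(G_{\Q,\cS},\F_p(1))$, where $a\up1,d\up1$ are the first-order diagonal cochains of the \emph{irreducible} direction $\phi=\phi_S\circ\pi$ supplied by Lemma~\ref{lem:rho1}. Concretely, you need a cochain $b\up2\colon G_{\Q,\cS}\to\F_p(1)$ with $-db\up2=a\up1\smile b\up1+b\up1\smile d\up1$. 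Your $z$-direction diagonal class plays no role in this obstruction, and there is no ``$\mu\, a\cup a$'' term to cancel. In the correct quotient, $S$ acts on the off-diagonal entries through $\phi_S$, which kills $z$, so $y^2=\mu z^2$ never feeds a $z$-class into the $B$-coordinate. Nor can you simply ``expand the off-diagonal entries to second order'' in $E\otimes_R S$. The module $B\otimes_R S$ is not cyclic: it needs both $b_0$ and $b_1$, dual to the Kummer classes of $\ell_0$ and $\ell_1$, and your single $b$ with $bc=y$ ignores $b_0$. Its relations over $S$ are also unknown, so there is no a priori well-defined $\F_p(1)$-valued second-order coefficient along $b_1$.

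The missing idea is to map $E\otimes_R S$ to an explicit GMA $E_S=\sm{S}{B_S}{C_S}{S}$ with $B_S=\F_p\oplus\F_p[\ep_1]$ and $C_S=\F_p[\ep_1]$, on which $S$ acts through $\phi_S$. The map is $s_0b_0+s_1b_1\mapsto(s_0 \bmod \m_S,\phi_S(s_1))$ and $sc\mapsto\phi_S(s)$, and you must check it is well defined. That check is where the hypotheses on $S$ and $\pi$ are actually used:
\begin{itemize}
\item You need $\Ann_S(y)=(z,y^2)=\ker\phi_S$, which is exactly where $y^2\neq0$ and $yz=0$ enter. If $y^2$ were $0$, then $y\in\Ann_S(y)$ while $\phi_S(y)\neq0$.
\item You need $\pi(b_0c)\in y\m_S=\F_p y^2$, which follows from Proposition~\ref{prop:B and C}\ref{part: Jred gens} together with $\pi(J^\red)=yS$.
\item You need $y\m_S^2=0$, so that the $b_0$-term drops out when a relation $s_0b_0+s_1b_1=0$ is multiplied by $c$.
\end{itemize}
For the map to be multiplicative, the cross-diagonal pairing must be $(u,v)\otimes x\mapsto v_0x_0y+(\eta ux_0+v_0x_1+v_1x_0)y^2$ with $\pi(b_0c)=\eta y^2$. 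Once $\rho_S$ lands in $E_S^\times$, the $\ep$-coefficient of the $\F_p[\ep_1]$-component of its $B$-coordinate is the desired $b\up2$.

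Finally, your Step~3 concern is unnecessary. Proposition~\ref{prop: a1 vanishing equivalences}\ref{part: a1 prop 5} is a \emph{global} criterion valid for every $a\up1$ satisfying (i)--(ii) of Lemma~\ref{lem: a1 setup}, so no local matching at $\ell_0$ is needed; the vanishing $b\up1|_{\ell_0}=0$ is already used inside that proposition. The local input that matters is condition (ii) at $p$. The hypothesis $\rk_{\Z_p}\bT^\circ_{\Gamma_0(\ell_0)}=1$ is used to get $a_0\cup c\up1\neq0$ and $b\up0\cup c\up1\neq0$. It does not force $a\up1=\mathbf{a}\up1$; in general $a\up1=\mathbf{a}\up1+\gamma a_0$.
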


See \S\ref{sec: main proof} for the proof of the theorem. An analogue of this theorem with $R$ replaced by $R_{\Gamma_0(N)}^\varep$ is proven in \cite{part1}, and the proof here is essentially the same, noting that the $\ell_0$-local condition defining $R_{\Gamma_0(N)}^\varep$ is not used in the proof. The advantage of Theorem \ref{thm: main} is that, while $R_{\Gamma_0(N)}^\varep/pR_{\Gamma_0(N)}^\varep$ often has $\F_p$-dimension $3$, and hence cannot surject onto $S$, the ring $R$ is much bigger and we can hope to find the map $\pi$ unconditionally. One reason why $R$ is  bigger than $R_{\Gamma_0(N)}^\varep$ is that Eisenstein congruences deform in tame families, so the Hecke algebra $\bT^\circ_{\Gamma_1(N)}$, acting on the Eisenstein localization of the cusp forms of level $\Gamma_1(N)$, is bigger than $\bT^{\circ,\varep}_{\Gamma_0(N)}$. (Note that when the superscript $\varep$ in  Hecke algebra notation is omitted, it means the Hecke algebra contains $U_\ell$ Hecke operators in place of the Atkin--Lehner involutions---see \S\ref{subsec: Hecke U vs w} for a precise definition of $\bT_{\Gamma_j(N)}^\circ$ $(j=0,1)$ and discussion of the choices of ramified Hecke operators.)

\subsection{Tame families and the structure of $\bT^\circ_{\Gamma_1(N)}$}
\label{subsec: intro tame family}
We now explain why $\bT^\circ_{\Gamma_1(N)}$ is large enough to admit a surjection to $S$, so that we expect the map $\pi$ of Theorem \ref{thm: main} can be constructed as a composite of $R \rsurj \bT_{\Gamma_1(N)}^\circ$ and $\bT_{\Gamma_1(N)}^\circ \rsurj S$.

Let $\Delta$ be the $p$-Sylow subgroup of $(\Z/N\Z)^\times$, and let $\Lambda=\Z_p[\Delta]$; fix a choice of generator of $\Delta$, which gives a generator $z \in \Lambda$ of its augmentation ideal.
Then, the Eisenstein-localized Hecke algebra $\bT_{\Gamma_1(N)}$ of level $\Gamma_1(N)$ is a $\Lambda$-algebra via diamond operators, and $\bT_{\Gamma_1(N)}^\circ$ is a free $\Lambda$-module with $\bT_{\Gamma_1(N)}^\circ \otimes_\Lambda \Z_p \cong \bT_{\Gamma_0(N)}^{\circ}$ by \cite[Thm.\ 3.2.2(4)]{HPWE}---see \S \ref{subsec: R=T conj} for details. Since, by assumption,  $\rk_{\Z_p} \bT_{\Gamma_0(N)}^{\circ} =2$ and thus $\rk_\Lambda \bT_{\Gamma_1(N)}^{\circ} =2$, there is a presentation
\begin{equation}
\label{eq: T1Ncirc form}
\bT_{\Gamma_1(N)}^\circ \cong \frac{\Lambda[y]}{(y^2+\lambda_1 y +\lambda_0)}
\end{equation}
for some $\lambda_0, \lambda_1 \in \m_\Lambda$.
Moreover, we show that the image of $J^\red$ in $\bT_{\Gamma_1(N)}^\circ/p\bT_{\Gamma_1(N)}^\circ$ is principal and that we can choose to map $y$ to a lift of a generator of $J^\red \cdot (\bT_{\Gamma_1(N)}^\circ/p\bT_{\Gamma_1(N)}^\circ)$, so that $\bT^\circ_{\Gamma_1(N)}/J^\red \bT^\circ_{\Gamma_1(N)} \cong \Lambda/(\lambda_0)$. Quotienting the presentation by $(p,yz,z^3)$ gives a surjective map
\begin{equation}
\label{eq: T1Ncirc surj}
\bT_{\Gamma_1(N)}^\circ \twoheadrightarrow\frac{\F_p[y,z]}{(z^3,y^2+\mu_1z+\mu_2z^2,yz)},
\end{equation} 
where $\lambda_0 \equiv \sum \mu_i z^i \in \F_p[z]/(z^3)$, and where $J^\red$ maps to the ideal generated by $y$. The structure of this quotient ring depends on the $\F_p$-dimension of the ring $\bar\bT_{\Gamma_1(N)}^{\circ,\red} := \bT_{\Gamma_1(N)}^{\circ}/(p,J^\red) \cong \Lambda/(p,\lambda_0)$.

The following cases could arise: 
\begin{enumerate}[label=(\roman*)]
\item \label{part: cases i}
If $\dim_{\F_p}\bar\bT_{\Gamma_1(N)}^{\circ,\red}=1$ (equivalently $\mu_1 \ne 0$), then the quotient ring in \eqref{eq: T1Ncirc surj} is $\F_p[y]/(y^3)$, and the techniques of \cite{part1} show that $\Cl_{T_0}(M)_{p,G} \to \Cl_{T}(M)_{p,G}$ is injective.
\item \label{part: cases ii}
If $\dim_{\F_p}\bar\bT_{\Gamma_1(N)}^{\circ,\red}=2$ (equivalently $\mu_1 = 0$ and $\mu_2 \ne 0$), then then the quotient ring in \eqref{eq: T1Ncirc surj} is $S$ (with $\mu=\mu_2$), so Theorem \ref{thm: main} shows that $\Cl_{T_0}(M)_{p,G} \to \Cl_{T}(M)_{p,G}$ is injective.
\item \label{part: cases iii}
If $\dim_{\F_p}\bar\bT_{\Gamma_1(N)}^{\circ,\red}>2$ (equivalently $\mu_1=\mu_2=0$), then $y^2$ is zero in the quotient ring in \eqref{eq: T1Ncirc surj}, so there is no surjection $\bT_{\Gamma_1(N)}^\circ  \rsurj S$.
\end{enumerate}

However, we expect that case \ref{part: cases ii} always occurs, 
and, in particular, that the fatal case \ref{part: cases iii} never occurs. 
We have two types of evidence for this. The first is theoretical: we sketch in Theorem \ref{thm: conjs imply result} that \ref{part: cases ii} follows from an ``$R=\bT$'' conjecture in \cite{HPWE}. A more detailed treatment of this argument, together with a proof of the ``$R=\bT$'' conjecture, are planned for a follow-up paper \cite{APAW} to \cite{HPWE}. The second type of evidence is computational, which we now explain.

\subsection{Computed examples}
\label{subsec: computation} We conclude this section by revisiting some of the specific cases studied in \cite{part2}. Indeed, with $p=5$ and $\ell_0=11$, there are four cases---$\ell_1=23,43,67,197$---that exhibit all of the interesting arithmetic properties appearing computationally in the results of \cite{part2}. For $\ell_1=43, 197$, we have $\rk_{\Z_p} \bT_{\Gamma_0(N)}^{\circ,\varep} >2$, so Eq. (\ref{eq: a1 at ell1 CFT}) is known.

For the remaining two cases $\ell_1=23,67$, we give a presentation below for the mod $p$ cuspidal Hecke algebra $\mathbb{T}^\circ_{\Gamma_1(N)}/p\mathbb{T}^\circ_{\Gamma_1(N)}$. Here $\mathbb{T}^\circ_{\Gamma_1(N)}$ is localization of the $\mathbb{Z}_p$-Hecke algebra acting on weight $2$ modular forms of level $\Gamma_1(N)$ at the Eisenstein maximal ideal containing $5$, $\langle d\rangle -1$, $U_{\ell_0}-1$, and $U_{\ell_1}-1$.  In each case, we obtain a presentation that gives the desired surjection in \eqref{eq: T1Ncirc surj}. All computations were completed using the ``Congruence subgroup $\Gamma_H(N)$" package within the SageMath \cite{SAGE} software system. 

\begin{eg} Let $\ell_1=23$, so $N=11\cdot 23$. We compute that $T_3$ and $\langle 3\rangle$
generate the localized cuspidal Hecke algebra $\mathbb{T}^\circ_{\Gamma_1(N)}$. In particular, as expected by the $\Lambda$-freeness result in \cite{HPWE} (see Lemma \ref{lem: freeness}), $\mathbb{T}^\circ_{\Gamma_1(N)}/5\mathbb{T}^\circ_{\Gamma_1(N)}$ has $\mathbb{F}_5$-dimension 10, and we compute an isomorphism
\[
\begin{split}
\frac{\mathbb{F}_5[y,z]}{(z^5,y^2-2z^2)}&\xrightarrow{\;\;\sim\;\;}\mathbb{T}^\circ_{\Gamma_1(N)}/5\mathbb{T}^\circ_{\Gamma_1(N)},\\
y&\;\longmapsto T_3\cdot\left(\langle 3\rangle^2-2\langle 3\rangle+2\right)^2 +2\langle 3\rangle^4+2\langle 3\rangle^3+\langle 3\rangle^2+2\langle 3\rangle+4,\\
z&\;\longmapsto \langle 3\rangle-1.
\end{split}
\]
\end{eg}

\begin{eg} Let $\ell_1=67$, so $N=11\cdot 67$. We again find that $T_3$ and $\langle 3\rangle$ generate the localized cuspidal Hecke algebra $\mathbb{T}^\circ_{\Gamma_1(N)}$ and that $\mathbb{T}^\circ_{\Gamma_1(N)}/5\mathbb{T}^\circ_{\Gamma_1(N)}$ has $\mathbb{F}_p$-dimension 10. We compute an isomorphism
\[
\begin{split}
\frac{\mathbb{F}_5[y,z]}{(z^5,y^2-z^2)}&\xrightarrow{\;\;\sim\;\;}\mathbb{T}^\circ_{\Gamma_1(N)}/5\mathbb{T}^\circ_{\Gamma_1(N)},\\
y&\;\longmapsto T_3\cdot \left(4\langle 3\rangle^4+4\langle 3\rangle^3+3\langle 3\rangle^2+\langle 3\rangle-1\right) \\
& \qquad \quad  +3\langle 3\rangle^4+2\langle 3\rangle^3+\langle 3\rangle^2+3\langle 3\rangle+2,\\
z&\;\longmapsto \langle 3 \rangle -1.\\
\end{split}
\]
\end{eg}

\subsection{Running assumptions}
\label{subsec: conventions}
For convenience, we concisely recall the running assumptions about the distinct prime numbers $p$, $\ell_0$, and $\ell_1$, since these are crucial for the validity of our arguments. We let $p,\ell_0 \geq 5$ be primes such that $\ell_0 \equiv 1 \pmod{p}$. As declared in \S\ref{subsec: intro add bad prime}, the additional running assumptions are that $\rk_{\Z_p} \bT_{\Gamma_0(\ell_0)}^{\circ,\varep} = 1$, that $\ell_1 \not\equiv \pm 1 \pmod{p}$, and that $\rk_{\Z_p} \bT_{\Gamma_0(N)}^{\circ,\varep} = 2$ (which requires $\ell_1$ to be a $p$th power modulo $\ell_0$).

\section{Algebraic number theory, Galois representations, and modular forms}
\label{sec: setup}

Our approach to investigating the main question in \S\ref{subsec: intro add bad prime} regarding an ideal class group of the number field $M$ involves a variety of tools, including the theory of cup products, Massey products of higher order, Galois (pseudo)representations, and modular forms. In this section, we set up the algebraic structures conveying these influences, beginning in \S \ref{subsec: intro to defs and Massey products} with a discussion of the overall philosophy, following \cite{WWE3, CarlA-inf}.

\subsection{Abstract-algebraic tools related to Ribet's lemma and deformation theory}
\label{subsec: intro to defs and Massey products}

This paper's main goal of establishing \eqref{eq: a1 at ell1 CFT} can be reframed in terms of understanding the quotient groups of $G = \Gal(\oQ/\Q)$. In this section, we give an overview of a method for studying this problem for a general group $G$, with a particular focus on studying its solvable quotients.

Let $G$ be a group, $F$ be a field, and $\rho : G \to \GL_d(F)$ be a homomorphism. One method for studying quotients of $G$ is to investigate deformations of $\rho$ to rings $F[\ep_i] := F[\ep]/(\ep^{i+1})$ for $i \geq 1$, i.e., $i$th-order deformations of $\rho$. For instance, the mere existence of a non-trivial deformation to $F[\ep_1]$---that is, a homomorphism $\rho_1 : G \to \GL_d(F[\ep_1])$ that reduces modulo $\ep$ to $\rho$, but does not arise from base change of $\rho$ along $F \to F[\ep_1]$---implies that there exists some non-zero map $\ker\rho \to M_{d}(F) \cong \ep M_d(F)$. Moreover, this map must be $G$-equivariant by $\rho$ in the sense that it is a 1-cocycle $e$ valued in $\ad \rho \cong M_d(F)$. Expressed coordinate-wise in a natural $F$-basis for $\rho_1$, we have 
\[
\rho_1 = \begin{pmatrix}
    \rho&\rho e\\
    & \rho
\end{pmatrix}.
\]

As noted in the context of Galois representations by Mazur \cite[\S1.6, Remark]{mazur1989}, the existence of a second-order deformation $\rho_2 : G \to \GL_d(F[\ep_2])$ such that $\rho_1 \simeq \rho_2$ (mod $\ep^2$) is equivalent to the vanishing of the cup product $e \cup e$ in cohomology $H^2(G, \ad\rho)$, where the ``cup product'' is the composition of the general cup product of cohomology with the multiplication map (thinking $\ad \rho \simeq M_d(F)$)
\[
H^1(G,\ad\rho) \otimes_F H^1(G,\ad\rho) \buildrel\cup\over\lra H^2(G, \ad\rho \otimes_F \ad\rho) \lra H^2(G, \ad\rho). 
\]
Again, considered coordinate-wise over $F$, $\rho_2$ has matrix expression
\[
\rho_2 = \begin{pmatrix}
        \rho & \rho e & \rho e_2 \\ 
        & \rho & \rho e \\
        & & \rho
    \end{pmatrix} : G \to \GL_{3d}(F),
\]
where  $-\!de_2 = e \smile e$ if and only if $\rho_2$ is a homomorphism, demonstrating that $\rho_2$ exists if and only if $e \cup e$ vanishes in $H^2(G,\ad\rho)$. Thus, the vanishing of the cup product is equivalent to the existence of a metabelian quotient of $\ker \rho$ realized by the upper-triangular coordinates of $\rho_2$. 

This phenomenon continues ad infinitum, as discussed in \cite{CarlA-inf}, and is a natural way to encounter Massey products of higher order. The existence of $\rho_3 : G \to \GL_d(F[\ep_3])$ lifting $\rho_2$ over the change of scalar ring $F[\ep_3] \rsurj F[\ep_2]$ is equivalent to the vanishing of the particular element of the (multi-valued, in general) Massey triple product $\langle e,e,e\rangle \in H^2(G,\ad\rho)$ given by $e \smile e_2 + e_2 \smile e$. (In this context, $e_2$ is an instance of a \textit{defining system} for a particular element of $\langle e,e,e\rangle$.) The proof of this equivalence may be drawn from the fact that 
\[
\rho_3 = \begin{pmatrix}
        \rho & \rho e & \rho e_2 & \rho e_3\\ 
        & \rho & \rho e & \rho e_2\\
        & & \rho & \rho e \\
        & & & \rho
    \end{pmatrix} : G \to \GL_{4d}(F), 
\]
where  $-\!de_3 = e \smile e_2 + e_2 + e$ if and only if $\rho_3$ is a homomorphism. 

While the above process links solvable quotients of $\ker \rho$ to representation theory, this paper, and its antecedent in Ribet's lemma \cite[\S2]{ribet1976}, add another wrinkle to the above picture; this wrinkle is \emph{lattice dependence}, although from the perspective of Ribet's lemma, this ``wrinkle'' is an asset. The representations $\tau : G \to \GL_2(\Q_p)$ that we consider in this paper (for $G = \Gal(\oQ/\Q)$) are \emph{residually reducible}. Twisting if needed, denote its residual semi-simplfication by $\chi \oplus 1 : G \to \F_p^\times \times \F_p^\times \subset \GL_2(\F_p)$. If $\tau$ is irreducible, there are multiple isomorphism classes of $G$-stable $\Z_p$-lattices $V$ in the representation space $\Q_p^{\oplus 2}$ of $\tau$. Ribet's lemma says that both directions of non-trivial extensions 
\[
\ttmat{\chi}{b}{0}{1} \qquad \text{and} \qquad \ttmat{\chi}{0}{\chi c}{1} : G \to \GL_2(\F_p)
\]
occur as $\bar V := V/pV$, as $V$ varies over $G$-stable $\Z_p$-lattices in $\rho$. Non-triviality of the extension is the equivalent to the 1-cocycle $b \in Z^1(G,\chi)$ (or $c \in Z^1(G,\chi^{-1})$) defining a non-zero cohomology class. Ribet applied this lemma to construct order $p$ quotients of $\ker \chi$. 

There are several approaches for combining the intricacies and power of lattice-dependence with the higher-order lifting process. For example, one method introduced by Skinner--Wiles \cite{SW1997} is to ``fix a preferred lattice,'' which works well when the chosen lattice selects a cocycle $b$ spans $H^1(G,\chi)$. Our approach is to apply the theory of (2-by-2) \emph{generalized matrix algebras} (GMAs) and the accompanying theory of \emph{pseudorepresentations}. A GMA is an $A$-algebra of the form $E = \sm{A}{B}{C}{A}$ where $A$ is a commutative ring and $B,C$ are $A$-modules. All of the coordinate-wise operations needed to define standard matrix multiplication are present already, except that the additional cross-diagonal multiplication map $m : B \otimes_A C \to A$ is needed. In particular, there is the usual matrix formula for the trace and determinant functions $E \to A$, and the resulting characteristic polynomials satisfy the Cayley--Hamilton theorem. 

A prime example of the additional flexibility gained by using GMAs within Ribet's method---even when the extension groups $H^1(G,\chi)$ and $H^1(G,\chi^{-1})$ have $\F_p$-dimension no more than $1$---is given by the construction of a GMA-valued $\rho_1$ below. It is valued in a GMA $E_1$, where for $i \geq 0$, the $\F_p[\ep_i]$-GMA $E_i$ discussed in \cite[\S4.1]{part1} is defined as
\begin{equation}
    \label{eq: define E_i}
    \F_p[\ep_i] := \F_p[\ep]/(\ep^{i+1}), \qquad E_i := \ttmat{\F_p[\ep_i]}{\F_p[\ep_{i-1}]}{\F_p[\ep_{i-1}]}{\F_p[\ep_i]} 
\end{equation}
whose cross-diagonal coordinate multiplication law is given by
\[
\F_p[\ep_{i-1}] \times \F_p[\ep_{i-1}] \to \F_p[\ep_i], \qquad (b,c) \mapsto \ep bc. 
\]
Let $b\up1 \in Z^1(G,\chi)$ (resp.\ $c\up1 \in Z^1(G,\chi^{-1})$) be a 1-cocycle whose cohomology class spans $H^1(G,\chi)$ (resp.\ $H^1(G,\chi^{1})$). Given the existence of some irreducible $\tau$ with residual semi-simplification $\rho = \chi \oplus 1$, Ribet's lemma provides that the classes of $b\up1$ and $c\up1$ are non-trivial and the associated cohomology groups have $\F_p$-dimension $1$, due to its construction of indecomposable representations
\[
\rho_b =\ttmat{\chi}{b\up1}{0}{1} \qquad \text{or} \qquad \rho_c = \ttmat{\chi}{0}{\chi c\up1}{1} : G \to \GL_2(\F_p)
\]

Now, to understand solvable quotients of $G$ beyond those constructed by Ribet's lemma, the classical approach to deforming representations would have us first pick one of $\rho_b$ and $\rho_c$ and then consider their lifts to $\GL_2(\F_p[\ep_i])$. For instance, deforming $\rho_b$ to first order amounts to finding 
\[
\rho_{b,1} = \ttmat{\omega(1 + \ep a\up1)}{b\up1 + \ep b_1}{\ep \omega c\up1}{1 + \ep d\up1} : 
G \to \GL_2(\F_p[\ep_1])
\]
(the $C$-coordinate must be filled by $c\up1$ up to natural isomorphism, provided that it is non-trivial) such that
\begin{equation}
    \label{eq: rho_b lift conditions}
\begin{split}
    \text{(i) }  -\!da\up1 = b\up1 \smile c\up1,  \quad 
    \text{(ii) } -\!dd\up1 = c\up1 \smile b\up1, \\ 
    \text{and (iii) }  -\!db_1 = a_1 \smile b\up1 + b\up1 \smile d\up1. 
\end{split}
\end{equation}
In other words, the cup products of (i) and (ii) and the element of the triple Massey product $\langle b\up1, c\up1,b\up1\rangle$ of (iii) must vanish in cohomology. 

Next, we demonstrate the flexibility of GMA-valued deformations. Non-trivially deforming $\chi \oplus 1 : G \to E_0^\times \subset \GL_2(\F_p)$ from $E_0 = \F_p \times \F_p$ to $\rho_1 : G_{\Q,\cS} \to E_1^\times$ over the natural projection $E_1 \rsurj E_0$ demands producing coordinate functions 
\begin{equation}
    \label{eq: abstract rho1}
    \rho_1 = \ttmat{\omega(1 + \ep a\up1)}{b\up1}{\omega c\up1}{1 + \ep d\up1}
\end{equation}
such that (i) and (ii) of \eqref{eq: rho_b lift conditions} holds. However, there is no need for (iii) of \eqref{eq: rho_b lift conditions} to hold as a result of the ``smaller'' $B$-coordinate module of $E_1$ (compared to $M_2(\F_p[\ep_1])$. 

To see the payoff of this simplicity---the lack of imposing (iii)---achieved by using $E_1^\times$ as a home for a deformation of $\omega \oplus 1$ in place of using $\GL_2(\F_p[\ep_1])$ to deform $\rho_b$, we need the theory of pseudorepresentations. 

\begin{defn}
    \label{def: PsR}
    A ($2$-dimensional) \emph{pseudorepresentation} of $G$ valued in a ring $A$, written $D : G \to A$, amounts to the data of a trace and determinant function $\Tr_D, {\det}_D : G \to A$ satisfying restrictions satisfied by the composition of a homomorphism $G \to \GL_2(A)$ with the usual matrix trace and determinant. We use the theory of pseudorepresentations and \textit{Cayley--Hamilton algebras} set up by Chenevier \cite{chen2014}.     
\end{defn}
While an $A$-valued pseudorepresentation of $G$ need not arise from an $A$-linear action of $G$ on a rank-two projective $A$-module in general, every pseudorepresentation of $G$ over $A$ \emph{does} arise as a composition of a representation valued in a Cayley--Hamilton $A$-algebra; in the setting of this paper, GMAs provide all needed Cayley--Hamilton algebras. 

In brief, as we explain more in \S\ref{subsec: attach Gal reps to eigenforms}, the Hecke algebras in this paper may not support a rank-two Galois representation, but they do support a pseudorepresentation. Thus, congruences among Hecke eigenforms give rise to deformations of Galois pseudorepresentations, thence deformations of GMA representations, thence the construction of number fields. 

\subsection{Algebraic number theory and Galois cohomology}
\label{subsec:ANT}

In this section, we relate the arithmetic condition of interest in this paper, expressed in \eqref{eq: a1 at ell1 CFT}, to cup and Massey products in Galois cohomology. 

For a group $G$, let $C^\bullet(G,-)$ be the complex of homogeneous cochains and $Z^\bullet(G,-)$ be the cocycles. Denote the cup product on $C^\bullet(G,-)$ by $\smile$ and the induced cup product on cohomology by $\cup$. For a cocycle $x \in Z^i(G,-)$, we let $[x] \in H^i(G,-)$ denote its cohomology class and write $x \cup y$ for the cohomology class $[x \smile y]=[x]\cup [y]$.

Let $\cS = \{\infty,p,\ell_0,\ell_1\}$, and, for a number field $K$, let $G_{K,\cS}$ be Galois group of the maximal extension of $K$ that is unramified outside the places of $K$ lying over those in $\cS$. Fix inertia and decomposition subgroups $I_q \subset G_q \rinj G_{\Q,\cS}$ for primes $q = p, \ell_0, \ell_1$, and let $\vert_{q}:C^\bullet(G_{\Q,\cS},-)\to C^\bullet (G_q,-)$ denote the restriction map. Fix a primitive $p$th root of unity $\zeta_p \in \ov\Q$; for each integer $i$, this defines an isomorphism $\mu_p^{\otimes i} \isoto \F_p$ of $\F_p$-vector spaces. We write $\F_p(i)$ the $G_{\Q,\cS}$-module $\F_p$ with this action. Also, write $\omega : G_{\Q,\cS} \to \F_p^\times$ for the action on $\F_p(1)$, the mod-$p$ cyclotomic character. 

\begin{lem}[{\cite[\S3.1]{part1}}]
There is a nonzero cocycle $c\up1 : G_{\Q,S} \to \F_p(-1)$ such that $c\up1|_{\ell_1}=0$ and $[c\up1|_{p}]=0$, and this cocycle is unique up to scalar. 
\end{lem}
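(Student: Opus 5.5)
The plan is to first reduce the statement about cocycles to a statement about cohomology classes, and then compute the relevant Selmer group with the Greenberg--Wiles formula. The main obstacle is the second step, namely showing that the Selmer space of classes is exactly one-dimensional.

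\emph{Step 1: reduction to classes.} Write $\omega^{-1}$ for the action on $\F_p(-1)$. Since $\ell_1 \not\equiv \pm1 \pmod p$, the character $\omega^{-1}|_{G_{\ell_1}}$ is nontrivial. Three consequences follow.
\begin{itemize}
\item $H^0(G_{\ell_1},\F_p(-1))=0$.
\item $H^2(G_{\ell_1},\F_p(-1))$ is dual to $H^0(G_{\ell_1},\F_p(2))=0$, because $\ell_1^2\not\equiv 1\pmod p$. By the local Euler characteristic formula (with $\ell_1\neq p$), this gives $H^1(G_{\ell_1},\F_p(-1))=0$.
\item A global coboundary $dx$ restricts to $0$ on $G_{\ell_1}$ only if $x=0$.
\end{itemize}
Hence every global class $\xi\in H^1(G_{\Q,\cS},\F_p(-1))$ has $\xi|_{\ell_1}=0$ automatically. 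Choose a representative $c$ of $\xi$. Then $c|_{\ell_1}=dx$ for a unique $x\in\F_p(-1)$, and $c-dx$ is the unique representative of $\xi$ vanishing on $G_{\ell_1}$. So the map sending a cocycle to its class is an isomorphism
\[
\{c : c|_{\ell_1}=0,\ [c|_p]=0\}\isoto \ker\bigl(H^1(G_{\Q,\cS},\F_p(-1))\to H^1(G_p,\F_p(-1))\bigr)=:\mathrm{Sel}.
\]
It therefore suffices to show $\dim_{\F_p}\mathrm{Sel}=1$.

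\emph{Step 2: Greenberg--Wiles.} View $\mathrm{Sel}$ as a Selmer group with the following local conditions: $L_\infty=0$, $L_p=0$, $L_{\ell_1}=0$, and $L_{\ell_0}=H^1(G_{\ell_0},\F_p(-1))$ (the full local group). The local contributions $\dim L_v-h^0(G_v,\F_p(-1))$ are as follows.
\begin{itemize}
\item At $\infty$: the character $\omega^{-1}$ is odd, so $h^0=0$, and the contribution is $0$.
\item At $p$: $h^0(G_p,\F_p(-1))=0$, so the contribution is $0$.
\item At $\ell_1$: both terms vanish, so the contribution is $0$.
\item At $\ell_0$: since $\ell_0\equiv1\pmod p$, the module is trivial as a $G_{\ell_0}$-module. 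Then $h^1=2$ and $h^0=1$, so the contribution is $+1$.
\end{itemize}
The global $h^0$ terms for $\F_p(-1)$ and $\F_p(2)$ both vanish. Hence
\[
\dim\mathrm{Sel}=1+\dim\mathrm{Sel}^*,
\]
where $\mathrm{Sel}^*\subset H^1(G_{\Q,\cS},\F_p(2))$ is the dual Selmer group. It is cut out by the dual conditions: no condition at $p$, trivial at $\ell_0$, and trivial at $\ell_1$ (the last is automatic, since the local $H^1$ vanishes there as well).

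\emph{Step 3: $\mathrm{Sel}^*=0$.} I expect this to be the main obstacle. Restrict to $G_{\Q(\zeta_p),\cS}$; this map is injective on cohomology, since the degree $p-1$ is prime to $p$. By Kummer theory, $\mathrm{Sel}^*$ is identified with the $\omega^{-1}$-eigenspace of a subgroup of $\Q(\zeta_p)^\times/(\Q(\zeta_p)^{\times})^p$. To see this, note that $\F_p(2)=\mu_p\otimes\omega$, so $\operatorname{Hom}(-,\F_p(2))$-classes correspond to Kummer generators on which $\Gal(\Q(\zeta_p)/\Q)$ acts through $\omega^{-1}$.

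The local conditions say that the resulting Kummer extensions are unramified at $\ell_1$ and split at $\ell_0$. Hence the valuation of a generator $x$ at every prime away from $p$ is divisible by $p$, so that $(x)=I^p\cdot(\text{$p$-part})$. By Herbrand's theorem, $\Cl(\Q(\zeta_p))[p]_{\omega^{-1}}=0$, since $\omega^{-1}=\omega^{1-2}$ and $B_2=1/6$ is a $p$-adic unit. Therefore $x$ may be taken to be a $p$-unit. The $p$-units modulo $p$th powers decompose into the trivial eigenspace (coming from $\zeta_p$ and $1-\zeta_p$) together with even-character eigenspaces (the units). The character $\omega^{-1}$ is odd and nontrivial, so none of these eigenspaces is $\omega^{-1}$. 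Thus $\mathrm{Sel}^*=0$.

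Putting the steps together gives $\dim\mathrm{Sel}=1$, and the required cocycle $c\up1$ exists and is unique up to scalar. As a consistency check, the nonzero class in $\mathrm{Sel}$ corresponds to the extension $L$ of \cite[Lem.\ 3.9]{CE2005}, which one could cite directly instead of carrying out Steps 2 and 3.
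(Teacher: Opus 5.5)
Your argument is correct, and it does more than the paper does. The paper gives no proof of this lemma; it only cites \cite[\S3.1]{part1}, and, for the field $L$ cut out by $c\up1$, \cite[Lem.\ 3.9]{CE2005}.

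Step 1 is the right normalization. Because $\ell_1\not\equiv\pm1 \pmod p$, all of $H^0$, $H^1$, $H^2$ of $G_{\ell_1}$ with coefficients in $\F_p(-1)$ vanish. So every global class restricts to zero at $\ell_1$ and has exactly one representative vanishing on $G_{\ell_1}$. This is precisely what upgrades ``unique class'' to ``unique cocycle up to scalar.''

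Steps 2--3 then recompute that the classes trivial at $p$ form a line, using Greenberg--Wiles. The only arithmetic inputs are $\Cl(\Q(\zeta_p))[p]_{\omega^{-1}}=0$ (Herbrand, since $p\nmid B_2$) and the eigenspace structure of the $p$-units of $\Q(\zeta_p)$ modulo $p$th powers. The direct class-field-theoretic proof takes the $\omega^{-1}$-part of the ray class group of $\mathcal{O}_{\Q(\zeta_p)}[1/p]$ of conductor $\ell_0$. It uses the same two inputs, and your count is its Kummer-dual form; the ``$+1$'' at $\ell_0$ corresponds to the one-dimensional $\omega^{-1}$-part of $(\mathcal{O}_{\Q(\zeta_p)}/\ell_0)^\times/p$. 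What your route buys is that it shows exactly where $p\ge 5$, $\ell_0\equiv 1$, and $\ell_1\not\equiv\pm1$ enter. Citing buys only brevity.

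There is one slip in Step 3 that you should fix: $\zeta_p$ does not lie in the trivial eigenspace. In $\mathcal{O}_{\Q(\zeta_p)}[1/p]^\times/p$ the eigenspaces are as follows:
\begin{itemize}
\item $\mu_p$ spans the $\omega$-eigenspace;
\item the trivial eigenspace comes from the valuation at $(1-\zeta_p)$, and is spanned for instance by the image of $p$;
\item the real units contribute each nontrivial even character once.
\end{itemize}
Since $\omega$ is itself odd, the reason ``$\omega^{-1}$ is odd and nontrivial'' does not exclude it. You also need $\omega^{-1}\neq\omega$, which holds because $p\ge 5$; for $p=3$ this step would fail. With that clause added, the $\omega^{-1}$-eigenspace of the $p$-units is zero and Step 3 is complete.
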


The field $L/\Q(\zeta_p)$ from the introduction (\S\ref{subsec: intro Mazur}) is the splitting field of this cocycle $c\up1$. Since $c\up1 \vert_p$ is a coboundary, there is a constant $x_{c\up1} \in \F_p$ such that $x_{c\up1}=(\omega(\tau)-1)^{-1}c\up1(\tau)$ for every $\tau \in G_p$ with $\omega(\tau) \ne 1$. 

Let $\log_{\ell_0}: (\Z/\ell_0\Z)^\times \rsurj \F_p$ be a choice of discrete logarithm and let $a_0: G_{\Q,\cS} \rsurj \F_p$ be the composition $G_{\Q,\cS} \to \Gal(\Q(\zeta_{\ell_0})/\Q) \xrightarrow{\log_{\ell_0}} \F_p$. Let $a_p:  G_{\Q,\cS} \to \F_p$ be the composition $G_{\Q,\cS} \to \Gal(\Q_1/\Q) \simeq \F_p$, where $\Q_1 \subset \Q(\zeta_{p^2})$ is the degree-$p$ subextension. Then, by the Kronecker--Weber theorem, $a_0$ and $a_p$ span $Z^1(G_{\Q,\cS},\F_p)$. One of the main results of \cite{WWE3} is that $\bT_{\Gamma_1(\ell_0)}^\circ \simeq \Z_p$ if and only if $a_0 \cup c\up1 \ne 0$ if and only if $b\up1 \cup c\up1 \ne 0$. 

As in \S\ref{subsec: intro add bad prime}, define $M=L(\sqrt[p]{\ell_1})$ and let $G=\Gal(M/\Q)$. Let $b\up1 : G_{\Q,S} \to \F_p(1)$ be a Kummer cocycle of $\ell_1$. There is an isomorphism $\Gal(M/\Q(\zeta_p)) \isoto \F_p(1) \times \F_p(-1)$ given by $(b\up1,c\up1)$. By \cite[Lem.\ 3.2.1]{part1}, the assumption that $\ell_1$ is a $p$th power modulo $\ell_0$ implies that $b\up1 \cup c\up1=0$. Hence there exists a 1-cochain $a: G_{\Q,\cS}\to \F_p$ such that $-da=b\up1 \smile c\up1$, and this cochain is unique up to adding an element of $Z^1(G_{\Q,\cS},\F_p)$. We are interested in such $a$ because of their role in GMA-representations, as in \eqref{eq: abstract rho1}. The following lemma selects a particularly good choice of $a$.

\begin{lem}
    \label{lem: a1 setup} Assume that $a_0 \cup c\up1 \ne 0$. Then there exists a unique $1$-cochain $\mathbf{a}\up1 : G_{\Q,S} \to \F_p$ satisfying the following three properties:
    \begin{enumerate}[label=(\roman*)]
        \item\label{part: lem a1 part i} $-d \mathbf{a}\up1 = b\up1 \smile c\up1$,
        \item\label{part: lem a1 part ii} $(\mathbf{a}\up1+b\up1 \smile x_{c\up1})|_{I_p}=0$, and
        \item\label{part: lem a1 part iii} $\mathbf{a}\up1\vert_{\ell_0}$ is a multiple of $c\up1\vert_{\ell_0}$ in the $2$-dimensional $\F_p$-vector space $Z^1(G_{\ell_0},\F_p)$.
    \end{enumerate}  
    Moreover, the splitting field $M'/M$ of $\mathbf{a}\up1|_{G_{M,\cS}}$ is the maximal unramified elementary-abelian $p$-extension of $M$ that is Galois over $\Q$ and has all primes of $M$ over $\ell_0$ splitting completely. In other words, $\Gal(M'/M)$ is identified with $\Cl_{T_0}(M)_{p,G}$ by class field theory.
\end{lem}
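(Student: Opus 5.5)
The cochain $\mathbf{a}\up1$ is pinned down by a dimension count on the affine space of solutions to (i). The second statement then comes from reading off the ramification and splitting behaviour of the extension cut out by $\mathbf{a}\up1$, using class field theory.

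\emph{Existence and uniqueness.} Since $b\up1\cup c\up1=0$ by \cite[Lem.\ 3.2.1]{part1}, the solutions of (i) form a torsor under $Z^1(G_{\Q,\cS},\F_p)=\F_p a_0\oplus\F_p a_p$. I will fix one solution $a$ and adjust it by $x a_0+y a_p$.

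For (ii), I first check that $(a+b\up1\smile x_{c\up1})|_{I_p}$ is a homomorphism $I_p\to\F_p$. On $G_p$ we have $c\up1|_p=d x_{c\up1}$, so
\[
d(a+b\up1\smile x_{c\up1})|_p=-b\up1\smile c\up1+b\up1\smile dx_{c\up1}=0
\]
up to sign conventions. Hence $(a+b\up1\smile x_{c\up1})|_{p}$ is a cocycle in $Z^1(G_p,\F_p)$. Its restriction to $I_p$ is a homomorphism $I_p\to\F_p$, and it is $G_p$-invariant, so it factors through the tame-wild abelianization. This gives a homomorphism killed by the unramified quotient, i.e.\ a multiple of $a_p|_{I_p}$. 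Now $a_0$ is unramified at $p$ while $a_p|_{I_p}\neq 0$, so condition (ii) determines $y$ uniquely.

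For (iii), $Z^1(G_{\ell_0},\F_p)$ is 2-dimensional, and $c\up1|_{\ell_0}$ (trivial action, since $\ell_0\equiv1\pmod p$) is a nonzero vector in it. Modulo $\F_p c\up1|_{\ell_0}$, the space is one-dimensional. I will show that $a_0|_{\ell_0}$ is not a multiple of $c\up1|_{\ell_0}$. I expect this to be exactly the content of the hypothesis $a_0\cup c\up1\neq0$: if $a_0|_{\ell_0}\in\F_p c\up1|_{\ell_0}$, then the local cup product would vanish, and by Poitou--Tate the global class $a_0\cup c\up1$ would be detected locally at $\ell_0$ only. This step needs care, and is the main obstacle. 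One also has to note that changing $x$ moves $a|_{\ell_0}$ only along $a_0|_{\ell_0}$, since $a_p$ is unramified at $\ell_0$ and in fact $a_p|_{\ell_0}$ is a multiple of the unramified class. Granting this, (iii) fixes $x$ uniquely.

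\emph{The field $M'$.} On $G_{M,\cS}$ both $b\up1$ and $c\up1$ vanish, so $\mathbf{a}\up1|_{G_{M,\cS}}$ is a homomorphism. It is $G$-equivariant with trivial action, because conjugation changes it by terms involving $b\up1,c\up1$. So its splitting field $M'$ is an elementary-abelian $p$-extension, Galois over $\Q$, with $\Gal(M'/M)$ a quotient on which $G$ acts trivially.

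I then check three local properties. It is unramified at $p$ by (ii), since $b\up1$ dies on $G_M$. It is split at primes over $\ell_0$ by (iii), since $c\up1$ dies on $G_M$. Unramifiedness at $\ell_1$ follows because the only ramification at $\ell_1$ comes from the tame inertia. There the relevant cohomology forces $\mathbf{a}\up1|_{I_{\ell_1}}$ to factor through $b\up1$, given that $c\up1|_{\ell_1}=0$ and $\ell_1\not\equiv 1\pmod p$.

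For maximality, take any such extension of $M$ that is Galois over $\Q$, unramified, split at $T_0$, and has trivial $G$-action. It corresponds to an extension class in $H^1$ that lifts $\sm{\omega}{b\up1}{}{1}$-type data. Counting via the Greenberg--Wiles formula shows that $\Cl_{T_0}(M)_{p,G}$ is at most one-dimensional, and it is spanned by the class of $\mathbf{a}\up1$. Identifying $\Gal(M'/M)$ with $\Cl_{T_0}(M)_{p,G}$ then follows by class field theory.
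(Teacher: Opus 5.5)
\paragraph{Existence and uniqueness.} This part follows the paper's sketch, which defers to \cite[Lem.\ 4.2.1]{part1}. Solutions of (i) form a torsor under $\F_p a_0\oplus\F_p a_p$. Condition (ii) fixes the $a_p$-coefficient: $(a+b\up1\smile x_{c\up1})|_p$ lies in $\mathrm{Hom}(G_p,\F_p)$, whose restrictions to $I_p$ are exactly the multiples of $a_p|_{I_p}\neq0$, while $a_0|_{I_p}=0$. Condition (iii) then fixes the $a_0$-coefficient, because $a_0|_{\ell_0}\notin\F_p\,c\up1|_{\ell_0}$.

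The step you flag as the main obstacle closes as you guessed:
\begin{itemize}
\item $H^2(G_v,\F_p(-1))=0$ for $v=p,\ell_1,\infty$, since $\omega^2\neq1$ on $G_p$ and $\ell_1^2\not\equiv1\pmod p$.
\item The kernel of localization on $H^2(G_{\Q,\cS},\F_p(-1))$ is Poitou--Tate dual to the corresponding kernel on $H^1(G_{\Q,\cS},\F_p(2))$. The latter vanishes because the $\omega^2$-part of $\Cl(\Q(\zeta_p))[p]$ is trivial: the $\omega^{-1}$-part is trivial by Herbrand, as $p\nmid B_2$, and reflection bounds the $\omega^2$-part by it.
\item Hence restriction to $\ell_0$ is injective on $H^2(G_{\Q,\cS},\F_p(-1))$. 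Since the cup product on the $2$-dimensional space $Z^1(G_{\ell_0},\F_p)$ is alternating and nondegenerate, $a_0\cup c\up1\neq0$ is equivalent to $a_0|_{\ell_0}$ and $c\up1|_{\ell_0}$ being independent.
\end{itemize}
Your check that $M'$ is Galois over $\Q$ with trivial $G$-action, unramified, and split at $T_0$ is fine. At $\ell_1$ the reason is simply that $c\up1|_{\ell_1}=0$ makes $\mathbf a\up1|_{\ell_1}$ a homomorphism $G_{\ell_1}\to\F_p$, and every such homomorphism is unramified because $\ell_1\not\equiv1\pmod p$.

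\paragraph{The gap: maximality.} The genuine gap is maximality, which is the substantive half of the last statement. ``Counting via the Greenberg--Wiles formula'' is not an argument here. You never specify a $G_{\Q,\cS}$-module or local conditions, and $\mathrm{Hom}_G(\Cl_{T_0}(M),\F_p)$ is not a priori a Selmer group over $\Q$. Also, the relevant structure is not an extension of $\sm{\omega}{b\up1}{}{1}$, since both $b\up1$ and $c\up1$ enter.

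The missing idea is the descent from $M$ to $\Q$ that the paper calls ``a translation of the uniqueness of $\mathbf a\up1$'' (\cite[Prop.\ 6.1.2]{part2}). Apply inflation--restriction to $G_{M,\cS}\triangleleft G_{\Q,\cS}$ with quotient $G$:
\begin{itemize}
\item Since $G^{\mathrm{ab}}\cong\F_p^\times$, we have $H^1(G,\F_p)=0$.
\item $H^2(G,\F_p)=H^2(\F_p(1)\times\F_p(-1),\F_p)^{\F_p^\times}$ is the line spanned by $[b\up1\smile c\up1]$. This class is the transgression of $\mathbf a\up1|_{G_{M,\cS}}$ and dies in $H^2(G_{\Q,\cS},\F_p)$.
\item Hence every $G$-invariant homomorphism $G_{M,\cS}\to\F_p$ has the form $(\lambda\mathbf a\up1+xa_0+ya_p)|_{G_{M,\cS}}$.
\end{itemize}
You then need two nonvanishing facts.
\begin{itemize}
\item $a_p$ is nonzero on $I_p\cap G_{M,\cS}$. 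Locally $M$ is $\Q_p(\zeta_p,\ell_1^{1/p})$ because $L/\Q(\zeta_p)$ splits above $p$. The Kummer class of $\ell_1$ lies in a different $\Gal(\Q_p(\zeta_p)/\Q_p)$-eigenspace from those of $\zeta_p$ and of the unramified class. So unramifiedness at $p$ forces $y=0$.
\item $a_0$ is nonzero on $G_{\ell_0}\cap G_{M,\cS}=\ker(c\up1|_{\ell_0})$. This is again the independence of $a_0|_{\ell_0}$ and $c\up1|_{\ell_0}$, so splitting at $T_0$ forces $x=0$.
\end{itemize}
Only with these does $\mathrm{Hom}(\Cl_{T_0}(M)_{p,G},\F_p)=\F_p\cdot\mathbf a\up1|_{G_{M,\cS}}$ follow, i.e.\ the identification $\Gal(M'/M)\cong\Cl_{T_0}(M)_{p,G}$.
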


The existence and uniqueness of the cochain $\mathbf{a}\up1$ is proven in \cite[Lem.\ 4.2.1]{part1}. The idea of the proof is that, since $b\up1 \cup c\up1=0$, there is a cochain satisfying \ref{part: lem a1 part i}, and it is unique up to adding multiplies of $a_p$ and $a_0$; the correct multiple of $a_p$ to add is determined by \ref{part: lem a1 part ii} and the correct multiple of $a_0$ to add is determined by \ref{part: lem a1 part iii}. The fact that $a_0 \cup c\up1$ is nonzero is used crucially to ensure that $a_0|_{\ell_0}$ and $c\up1|_{\ell_0}$ are not on the same line in $Z^1(G_{\ell_0},\F_p)$, so that is possible to achieve \ref{part: lem a1 part iii} by adding a multiple of $a_0$. The final statement is proven in \cite[Prop.\ 6.1.2]{part2}, and is essentially a translation of the uniqueness of $\mathbf{a}\up1$ in terms of algebraic number theory.

This lemma allows us to restate \eqref{eq: a1 at ell1 CFT} in terms of the cochain $\mathbf{a}\up1$. It is clear from the proof of the lemma that a cochain $a\up1$ satisfying properties \ref{part: lem a1 part i} and \ref{part: lem a1 part ii} is of the form $a\up1=\mathbf{a}\up1+\gamma a_0$ for some constant $\gamma \in \F_p$. It will be convenient for later applications to state the condition in terms of cochains of the form $a\up1$.

\begin{prop}
    \label{prop: a1 vanishing equivalences}
    Assume that $a_0 \cup c\up1 \ne 0$. Let $a\up1:G_{\Q,\cS} \to \F_p$ be a cochain satisfying properties (i) and (ii) of Lemma \ref{lem: a1 setup}, and let $d\up1=b\up1c\up1-a\up1$. The following conditions are equivalent:
    \begin{enumerate}[label=(\roman*)]
        \item\label{part: a1 prop 1} $a\up1\vert_{\ell_1} = 0$,
        \item\label{part: a1 prop 2} $\mathbf{a}\up1\vert_{\ell_1}=0$,
        \item\label{part: a1 prop 3} Every prime of $M$ lying over $\ell_1$ splits completely in $M'/M$,
        \item\label{part: a1 prop 4} The map $\Cl_{T_0}(M)_{p,G} \to \Cl_{T}(M)_{p,G}$ is injective,
        \item\label{part: a1 prop 5} The 2-cocycle $a\up1 \smile b\up1 + b\up1 \smile d\up1$ vanishes in $H^2(G_{\Q,\cS}, \F_p(1))$,
        \item\label{part: a1 prop 6} For some non-zero $b \in H^1(G_{\ell_1},\F_p(1))$, $a\up1\vert_{\ell_1} \cup b =0$.
    \end{enumerate}
\end{prop}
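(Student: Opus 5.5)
We prove the equivalences in a cycle organized around the local behavior of $\mathbf{a}\up1$ at $\ell_1$, using only local facts at $\ell_1$ together with Lemma \ref{lem: a1 setup}.

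\emph{(i)$\Leftrightarrow$(ii).} Write $a\up1=\mathbf{a}\up1+\gamma a_0$. Since $\ell_1$ is a $p$th power modulo $\ell_0$, Frobenius at $\ell_1$ acts trivially on the degree-$p$ subextension of $\Q(\zeta_{\ell_0})$. Also $\ell_1$ is unramified in $\Q(\zeta_{\ell_0})$. Hence $a_0|_{\ell_1}=0$, and so $a\up1|_{\ell_1}=\mathbf{a}\up1|_{\ell_1}$.

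\emph{(ii)$\Leftrightarrow$(iii)$\Leftrightarrow$(iv).} Because $c\up1|_{\ell_1}=0$, the $2$-cochain $b\up1\smile c\up1$ vanishes on $G_{\ell_1}$. Thus $\mathbf{a}\up1|_{\ell_1}$ is a homomorphism $G_{\ell_1}\to\F_p$. Its restriction to $G_{\ell_1}\cap G_{M,\cS}$ is the local Artin map of $M'/M$ at the chosen prime over $\ell_1$. The primes of $M$ over $\ell_1$ split completely in $M'$ exactly when this restriction vanishes, and Galois-equivariance handles all the conjugate primes. To get (ii) itself, we must show that vanishing on $G_{\ell_1}\cap G_{M}$ forces vanishing on all of $G_{\ell_1}$. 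Here $b\up1|_{\ell_1}$ is a ramified Kummer class, and $\ell_1\not\equiv 1 \pmod p$ gives $\omega|_{\ell_1}\neq 1$. The image of $G_{\ell_1}$ in $G$ is then nonabelian of order $p(p-1)$-ish, and a homomorphism to $\F_p$ from such a group kills its $p$-part.

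The step I expect to be most delicate is showing that $\mathrm{Hom}(G_{\ell_1},\F_p)\to\mathrm{Hom}(G_{\ell_1}\cap G_M,\F_p)$ is injective. This should follow from the $G_{\ell_1}/(G_{\ell_1}\cap G_M)$-invariance of $\mathbf{a}\up1$ and the fact that $\ell_1$ is tamely ramified in $\Q(\sqrt[p]{\ell_1})$. The equivalence (iii)$\Leftrightarrow$(iv) is then class field theory. By Lemma \ref{lem: a1 setup}, $\Gal(M'/M)=\Cl_{T_0}(M)_{p,G}$, and the kernel of the map in (iv) is generated by the Frobenius classes of the primes over $\ell_1$.

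\emph{(i)$\Leftrightarrow$(vi).} $H^1(G_{\ell_1},\F_p(1))$ is $1$-dimensional and spanned by $b\up1|_{\ell_1}$, because $\ell_1\not\equiv 1\pmod p$ makes $H^0$ and $H^2$ of $\F_p(1)$ small in the appropriate sense. $H^1(G_{\ell_1},\F_p)$ is also $1$-dimensional, spanned by the unramified class, since $\mu_p\not\subset\Q_{\ell_1}$. Local Tate duality pairs these two lines perfectly into $H^2(G_{\ell_1},\F_p(1))\cong\F_p$. So $a\up1|_{\ell_1}\cup b=0$ for a nonzero $b$ exactly when $a\up1|_{\ell_1}=0$. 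This uses that $a\up1|_{\ell_1}$ is a cocycle, which we showed above.

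\emph{(v)$\Leftrightarrow$(vi).} First check that $a\up1\smile b\up1+b\up1\smile d\up1$ is a cocycle, using $-da\up1=b\up1\smile c\up1$ and $-dd\up1=c\up1\smile b\up1$; the latter holds because $d(b\up1c\up1)$ is computable. Then use Poitou--Tate: $H^2(G_{\Q,\cS},\F_p(1))\to\bigoplus_{v\in\cS}H^2(G_v,\F_p(1))$ has kernel dual to $\Sha^1(\F_p)$, which is $0$. So a class vanishes iff all its local components vanish.
\begin{itemize}
\item At $\ell_1$: $c\up1|_{\ell_1}=0$ gives $d\up1|_{\ell_1}=-a\up1|_{\ell_1}$, so the local component becomes $a\up1\cup b\up1-b\up1\cup a\up1=2\,a\up1|_{\ell_1}\cup b\up1|_{\ell_1}$, using graded commutativity in degree one.
\item At $p$, $\ell_0$, $\infty$: the components should vanish. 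At $p$ use property (ii) of Lemma \ref{lem: a1 setup} and the coboundary $c\up1|_p=d x_{c\up1}$. At $\ell_0$, $H^2(G_{\ell_0},\F_p(1))$ is $1$-dimensional, $b\up1|_{\ell_0}$ is unramified, and one computes there. The archimedean place is trivial because $p$ is odd.
\end{itemize}
The second bullet is the main computational obstacle: verifying that the components at $p$ and $\ell_0$ vanish, or at least are independent of $\ell_1$ data. Alternatively, one can sidestep it by using the sum-of-invariants-is-zero reciprocity to reduce to just the $\ell_1$ component together with one other place. Given those local vanishings, (v) holds iff $a\up1|_{\ell_1}\cup b\up1|_{\ell_1}=0$, which is (vi) with $b=b\up1|_{\ell_1}$.
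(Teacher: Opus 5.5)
Your arguments for (i)$\Leftrightarrow$(ii), (ii)$\Leftrightarrow$(iii)$\Leftrightarrow$(iv), and (i)$\Leftrightarrow$(vi) are the paper's, with more detail. The injectivity you flag as delicate is routine: $\mathbf{a}\up1|_{\ell_1}$ lies in the unramified line $H^1(G_{\ell_1},\F_p)$, and the residue degree of $M$ at $\ell_1$ is the order of $\ell_1$ modulo $p$, which is prime to $p$.

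The genuine gap is in (v)$\Leftrightarrow$(vi). Your main plan asserts that the $p$-component of $x=a\up1\smile b\up1+b\up1\smile d\up1$ vanishes, and proposes to check this by a local computation at $p$. That cannot be done. Once $x|_{\ell_0}=0$, reciprocity gives $\mathrm{inv}_p[x|_p]=-\mathrm{inv}_{\ell_1}[x|_{\ell_1}]=-2\,\mathrm{inv}_{\ell_1}(a\up1|_{\ell_1}\cup b\up1|_{\ell_1})$. So vanishing at $p$ is \emph{equivalent} to (vi), the very condition being characterized. A local proof at $p$ would establish \eqref{eq: a1 at ell1 CFT} unconditionally, which the paper only obtains conditionally. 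There is also no purely local data to compute with: property (ii) of Lemma \ref{lem: a1 setup} only constrains $a\up1$ on $I_p$, not on a Frobenius at $p$.

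Your fallback (use reciprocity to keep only $\ell_1$ and one other place) is the right idea. It is what the paper does, via the injectivity of $H^2(G_{\Q,\cS},\F_p(1))\to H^2(G_{\ell_0},\F_p(1))\oplus H^2(G_{\ell_1},\F_p(1))$ from \cite[Lem.\ 3.2.8]{part1}. But the place you must keep is $\ell_0$, and there you still owe the vanishing. Knowing only that $b\up1|_{\ell_0}$ is unramified is not enough. The local pairing at $\ell_0$ pairs the unramified line nontrivially with every ramified class, and $a\up1|_{\ell_0}$ need not be unramified, being a combination of $c\up1|_{\ell_0}$ and $a_0|_{\ell_0}$.

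The missing observation is that $b\up1|_{\ell_0}=0$ identically. Since $\ell_1$ is a $p$th power modulo $\ell_0$, it is a $p$th power in $\Q_{\ell_0}^\times$ by Hensel. Since $\ell_0\equiv 1\pmod p$, we have $\mu_p\subset\Q_{\ell_0}$. Hence $G_{\ell_0}$ fixes every $p$th root of $\ell_1$. Then $x|_{\ell_0}=0$ as a cochain, so (v) is equivalent to $[x|_{\ell_1}]=0$, which by your $\ell_1$-computation is (vi).
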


\begin{proof}
    Note that, since $\log_{\ell_0}(\ell_1)=0$ by assumption (\S\ref{subsec: conventions}), we have $a_0|_{\ell_1}=0$. Since $a\up1 =\mathbf{a}\up1+\gamma a_0$ for some $\gamma \in \F_p$, this makes the equivalence of \ref{part: a1 prop 1} and \ref{part: a1 prop 2} clear. The equivalence of \ref{part: a1 prop 2}, \ref{part: a1 prop 3}, and \ref{part: a1 prop 4} follows from the facts that $M'$ is the splitting field of $\mathbf{a}\up1$ and that $\Gal(M'/M)$ is identified with $\Cl_{T_0}(M)_{p,G}$ by class field theory. The spaces $H^1(G_{\ell_1},\F_p)=Z^1(G_{\ell_1},\F_p)$ and $H^1(G_{\ell_1},\F_p(1))$ both have dimension 1, and by Tate duality, the cup product pairing between them is perfect, so \ref{part: a1 prop 1} and \ref{part: a1 prop 6} are equivalent.

    It remains to show \ref{part: a1 prop 5} is equivalent to \ref{part: a1 prop 6}. Let $x$ denote the 2-cochain $a\up1 \smile b\up1 + b\up1 \smile d\up1$ in \ref{part: a1 prop 5}. Note that $-dd\up1=c\up1 \smile b\up1$, so $x$ is indeed a 2-cocycle by the Leibniz rule. By \cite[Lem.\ 3.2.8]{part1}, the sum of restriction maps
    \[
    H^2(G_{\Q,\cS}, \F_p(1)) \to H^2(G_{\ell_0},\F_p(1)) \oplus H^2(G_{\ell_1},\F_p(1))
    \]
    is injective. Since $b\up1\vert_{\ell_0}=0$, it is clear that $x \vert_{\ell_0}=0$, so \ref{part: a1 prop 5} is equivalent to $x\vert_{\ell_1}$ being a coboundary. Since $c\up1\vert_{\ell_1}=0$, we have $d\up1\vert_{\ell_1}=-a\up1\vert_{\ell_1}$, so 
    \[
    [x \vert_{\ell_1}] = a\up1\vert_{\ell_1} \cup b\up1\vert_{\ell_1} -b\up1\vert_{\ell_1} \cup a\up1\vert_{\ell_1} = 2 a\up1\vert_{\ell_1} \cup b\up1\vert_{\ell_1},
    \]
    by the skew-symmetry of the cup product. Since the class of $b\up1\vert_{\ell_1}$ spans $H^1(G_{\ell_1},\F_p(1))$, this shows that \ref{part: a1 prop 5} is equivalent to \ref{part: a1 prop 6}.
\end{proof}

In the language of Massey products, the class of the 2-cocycle $a\up1 \smile b\up1 + b\up1 \smile d\up1$ is a \emph{triple Massey product} of type $\langle b\up1,c\up1, b\up1 \rangle$ with respect to the defining system $a\up1$ and $d\up1$. In the strategy developed in \cite{WWE3,CarlA-inf} and summarized in \S\ref{subsec: intro to defs and Massey products}, it is Massey products of exactly this type that obstruct deforming certain Galois pseudorepresentations.

\subsection{The pseudodeformation ring $R$ and its tangent space} 
\label{subsec: set up R}
In this section, we introduce the pseudodeformation ring $R$. We survey some of the main ideas, referring to \cite[\S2]{part1} for a detailed discussion of pseudodeformation theory. 

Let $\bar D = \psi(\omega \oplus 1) : G_{\Q,\cS} \to \F_p$ be the residual pseudorepresentation associated to the weight $2$ Eisenstein series. For an Artinian local $\Z_p$-algebra $A$ with residue field $\F_p$, consider pseudorepresentations $D_A:G_{\Q,\cS} \to A$ such that
\begin{enumerate}[label=(\roman*)]
    \item $D_A \otimes_A \F_p = \bar D$, 
    \item $D_A$ is finite-flat at $p$, in the sense of \cite{WWE3,WWE4}.
\end{enumerate}
The functor sending $A$ to the set of $D_A$ satisfying these two conditions is pro-represented by a Noetherian local $\Z_p$-algebra $R$. Let $R_{\Gamma_0(N)}^\varep$ denote the quotient of $R$ corresponding to imposing the additional closed condition 
\begin{enumerate}[label=(\roman*),resume]
    \item $D_A$ is unramified-or-$\varepsilon$-Steinberg at $\ell_0$ and $\ell_1$, in the sense of \cite{WWE5}. 
\end{enumerate}

It is the ring $R_{\Gamma_0(N)}^\varep$ that is the central object of study in \cite{part1}; its structure is described in detail there. Although it is not stated in this way in \cite{part1}, many of the most basic structural results proven in \cite{part1} are applied only to $R_{\Gamma_0(N)}^\varep$ even though the identical proof also applies to $R$. We now restate the results we require about $R$ and refer to \cite{part1} for the proofs, which carry over to this setting verbatim.

There is a universal finite-flat at $p$ Cayley--Hamilton algebra $E$ over $R$, which is a GMA (as introduced in \S\ref{subsec: intro to defs and Massey products}) $E=\sm{R}{B}{C}{R}$ with cross-diagonal multiplication map $m: B \otimes_R C \to R$. The universality implies that there is a representation $\rho_R: G_{\Q,\cS} \to E^\times$ such that $\psi(\rho_R)$ is the universal pseudorepresentation $D_R$. 

The image of $m$ is an ideal $J^\red \subset R$ that is called \emph{reducibility ideal}; the quotient $R^\red =R/J^\red$ represents the functor of deformations $D_A$ that are reducible (i.e.~that can be realized as the sum of two characters). We let $\bar J^\red$ be the image of $J^\red$ in $R/pR$. (For all other $\Z_p$-modules $A$ in this paper, $\bar A = A/pA$; $J^\red$ is the only exception.) We call a tangent vector $\phi: R \to \F_p[\ep]/(\ep^2)$ of $R$ \emph{irreducible} if it does not factor through $R^\red$. 

We call a pseudorepresentation $D$ of $G_{\Q,\cS}$ \emph{unramified} at a prime $q$ if the inertia subgroup $I_q$ is contained in the kernel of $D$, defined in \cite[\S1.17]{chen2014}. This is also a Zariski-closed condition on pseudorepresentations; we call the corresponding ideal of $R$ its \emph{unramified-at-$q$} ideal. 

The best way to understand the modules $B$ and $C$ in terms of their connection with Galois cohomology. There are perfect pairings exactly as in \cite[Lem.\ 3.10.3]{WWE5}, 
\[
B/\m_R B \times H^1_f(G_{\Q,\cS},\F_p(1)) \to \F_p, \quad C/\m_R C \times H^1_f(G_{\Q,\cS},\F_p(-1)) \to \F_p,
\]
where the Selmer condition $f$ is at $p$ and comes from the finite-flat at $p$ condition on $R$. 
As in \cite[Prop.\ 2.4.2]{part1}, $H^1_f(G_{\Q,\cS},\F_p(-1))$ is spanned by the class of $c\up1$, and we let $c \in C$ be an element whose image in $C/\m C$ is a dual basis for $c\up1$. Similarly, $H^1_f(G_{\Q,\cS},\F_p(-1))$ is spanned by the class of $b\up1$ and the class of a Kummer cocycle $b\up0$ of $\ell_0$, and we let $b_0, b_1 \in B$ be elements that map in $B/\m B$ an ordered dual basis of $b\up0, b\up1$. Since $\{b_0,b_1\}$ and $c$ generate $B$ and $C$, respectively, as $R$-modules, it follows that $J^\red$ is generated by $b_0c$ and $b_1c$.

\begin{prop}
\label{prop:B and C} Assume that $b\up 0 \cup c\up1 \ne 0$. 
\begin{enumerate}[label=(\roman*)]
    \item \label{part: Jred gens} $\bar J^\red$ is principal and generated by $b_1c$, and not generated by $b_0c$.
    \item \label{part: ell1-unram gen} the unramified-at-$\ell_1$ ideal of $R$ is principal and generated by $b_1c$. 
\end{enumerate} 
\end{prop}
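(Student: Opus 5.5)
The plan is to reduce both statements to the structure of $B/\m_R B$, $C/\m_R C$ and the images of the generators in $J^\red/\m_R J^\red$, and then use the dual-basis description of these modules together with cup products. Since $\{b_0,b_1\}$ generates $B$ and $c$ generates $C$, the ideal $J^\red$ is generated by $b_0c$ and $b_1c$. Part \ref{part: Jred gens} will follow once we show two things. First, $b_0c \in (p, b_1c) + \m_R J^\red$, so that Nakayama's lemma applied to $\bar J^\red$ gives principality with generator $b_1c$. Second, $b_0c \notin \m_R J^\red + pR$, so that $b_0c$ does not generate $\bar J^\red$, and we also need $\bar J^\red \ne 0$.

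\textbf{Step 1 (reducible first-order deformations).} I will identify $\operatorname{Hom}(J^\red/(\m_R J^\red + pR\cap J^\red), \F_p)$ with first-order GMA deformations, following \cite[\S4]{part1} and \cite{WWE3}. Consider a homomorphism $R \to \F_p[\ep]/(\ep^2)$ together with a GMA representation valued in $E_1$ as in \eqref{eq: abstract rho1}. It is determined by a pair of cocycles $b \in H^1_f(G_{\Q,\cS},\F_p(1))$ and $c\up1$, and the product $m(b_i\otimes c)$ is recorded by $\ep\, b(\text{dual})c\up1$. Such a deformation exists precisely when the cup products in \eqref{eq: rho_b lift conditions}(i),(ii) vanish, that is, when $b\cup c\up1 = 0$ and $c\up1 \cup b = 0$ in $H^2(G_{\Q,\cS},\F_p)$.

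\textbf{Step 2 (the two directions).} By \cite[Lem.\ 3.2.1]{part1}, $b\up1\cup c\up1=0$, so there is a tangent vector sending $b_1c\mapsto \ep$ and $b_0c\mapsto 0$. This shows $b_1c\ne 0$ in $\bar J^\red/\m$, and it is the distinguished irreducible tangent vector. By hypothesis $b\up0\cup c\up1\ne 0$, and for any combination $x b\up0+yb\up1$ the cup product with $c\up1$ equals $x\,(b\up0\cup c\up1)$. Hence no first-order deformation has $b_0c\mapsto$ nonzero unless the $b\up0$ component is zero. I then need to convert this into the relation "$b_0c \in \m_R J^\red + pR$". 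To do so I will use that the cotangent space of $J^\red$ modulo $\m_R J^\red$ is dual to these cup-product-constrained pairs, via the exact sequence relating $J^\red/\m_R J^\red$ to $B\otimes C$ and the obstruction in $H^2$, as in the proof of \cite[Prop.\ 2.4.2]{part1}. This gives principality and generation by $b_1c$. The claim that $b_0c$ does not generate is then immediate: if it did, its image would be a unit multiple of $b_1c$ modulo $\m$, contradicting the fact that the tangent vector of the first part of this step kills $b_0c$ but not $b_1c$.

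\textbf{Step 3 (part \ref{part: ell1-unram gen}).} The restriction to $I_{\ell_1}$ of $\rho_R$ has diagonal entries that are trivial on inertia up to $J^\red$, because $\omega$ and $1$ are unramified at $\ell_1$ and reducible deformations are unramified at $\ell_1$ by the finite-flat condition and $\ell_1\not\equiv\pm1$. Therefore the unramified-at-$\ell_1$ ideal is generated by the images of $\rho_R(\sigma)$ for $\sigma\in I_{\ell_1}$ in the $B$- and $C$-coordinates, multiplied against $C$ and $B$. Now $c\up1|_{\ell_1}=0$ and $b\up0|_{\ell_1}$ is unramified, while $b\up1|_{I_{\ell_1}}\ne0$. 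Using the pairing this gives $\rho_R(I_{\ell_1})$ concentrated on $b_1$ modulo $\m B$, and the ideal is generated by $b_1C = (b_1c)$.

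The main obstacle is the Nakayama upgrade in Step 2. The difficulty is passing from the statement "no tangent vector detects $b_0c$ independently" to the statement "$b_0c\in(b_1c)+\m_RJ^\red$ modulo $p$". Doing this requires the precise presentation of $J^\red/\m_R J^\red$, and I would import it verbatim from \cite{part1}.
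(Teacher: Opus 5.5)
\textbf{Part (i).} The gap here is real and sits exactly where you flag it. It is also worse than a missing citation.

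Your Step~1 identification is not correct. A tangent vector $R\to\F_p[\ep]/(\ep^2)$ sees $J^\red$ only through $\m_R/(\m_R^2+pR)$. So your (correct) computation, that every finite-flat first-order deformation sends $b_0c\mapsto 0$, yields only $b_0c\in\m_R^2+pR$.

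Principality of $\bar J^\red$ needs the stronger inclusion $b_0c\in(b_1c)+\m_RJ^\red+pR$. Now $R/(\m_RJ^\red+pR)$ is a square-zero extension of $R^\red/pR^\red$, not of $\F_p$. In that setting the diagonal-entry equation acquires an extra term, coming from the failure of lifts of the reducible characters to be multiplicative. This term can cancel $b\up0\cup c\up1$, because $H^2(G_{\Q,\cS},\F_p)$ is one-dimensional (Euler characteristic $0$, with $h^0=1$ and $h^1=2$).

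Concretely, the construction goes as follows.
\begin{itemize}
\item Let $p^v\,\|\,\ell_0-1$, and let $\tilde a_0:G_{\Q,\cS}\to\{0,\dots,p^v-1\}$ be a set-theoretic lift of the $\Z/p^v$-valued $\ell_0$-character, with carry cocycle $\epsilon$.
\item The class of $\epsilon$ is nonzero, since $\tilde a_0$ has no $\Z/p^{v+1}$-lift: the $p$-part of $G_{\Q,\cS}^{ab}$ is $\Z_p\times\Z/p^v$. So pick $\mu\in\F_p^\times$ with $\mu\,(b\up0\cup c\up1)=[\epsilon]$.
\item Over $A=\F_p[x]/(x^{p^v+1})$, take the GMA $\sm{A}{\F_p}{\F_p}{A}$ with $m(u\otimes v)=\mu uv\,x^{p^v}$, and set
\[
\rho(\sigma)=\ttmat{\omega(\sigma)(1+x)^{-\tilde a_0(\sigma)}(1+\alpha(\sigma))}{b\up0(\sigma)}{\omega(\sigma)c\up1(\sigma)}{(1+x)^{\tilde a_0(\sigma)}(1+\delta(\sigma))}.
\]
\item Since $(1+x)^{p^v}=1+x^{p^v}$, multiplicativity reduces to $-d\alpha=(\mu\,b\up0\smile c\up1-\epsilon)x^{p^v}$ and $-d\delta=(\mu\,c\up1\smile b\up0+\epsilon)x^{p^v}$. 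Both are solvable by the choice of $\mu$.
\item Adding multiples of $a_px^{p^v}$ to $\alpha$ and $\delta$ makes $\rho$ finite-flat at $p$, and $R$ imposes nothing at $\ell_0$.
\end{itemize}
The resulting map $R\to A$ kills $p$ and $b_1c$ but sends $b_0c$ to $\mu x^{p^v}\neq0$.

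So, as far as I can check, the inclusion you propose to import from \cite{part1} fails for this $R$. The ring in \cite{part1} carries a local condition at $\ell_0$ that $R$ lacks, and the paper's one-line citation of \cite[Prop.~5.1.7]{part1} passes over the same point.

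What your argument does establish is still useful:
\begin{itemize}
\item $b_0c\in\m_R^2+pR$;
\item $b_0c$ does not generate $\bar J^\red$, given the finite-flat irreducible tangent vector of Lemma~\ref{lem:rho1}. That tangent vector needs the $I_p$-normalization of $\mathbf{a}\up1$, not just $b\up1\cup c\up1=0$.
\end{itemize}
This already covers the use in the proof of Theorem~\ref{thm: main}: $\phi_S\circ\pi$ is a tangent vector, so $\pi(b_0c)\in\ker\phi_S\cap yS=\F_py^2$.

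\textbf{Part (ii).} Step~3 only controls $\rho_R(I_{\ell_1})$ modulo $\m_R$. Knowing $b(\sigma)\in Rb_1+\m_RB$, $c(\sigma)\in\m_RC$, and diagonal entries $\equiv1$ mod $J^\red$ gives at best $J_1\subseteq(b_1c)+\m_RJ^\red$, where $J_1$ is the unramified-at-$\ell_1$ ideal. Nakayama does not close this, since $\m_RJ^\red\not\subseteq\m_RJ_1$. Also, ``trivial up to $J^\red$'' does not place the diagonal deviations in the ideal generated by the off-diagonal entries.

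The input the paper uses is exact: the image of $I_{\ell_1}$ lies in $1+R\,b_1$. You can prove it as follows.
\begin{itemize}
\item Choose idempotents making $\rho_R(\Fr_{\ell_1})$ diagonal; this is possible because $\ell_1\not\equiv1\pmod p$ gives distinct residual eigenvalues.
\item Let $\tau$ topologically generate pro-$p$ tame inertia, and compare $\rho_R(\Fr_{\ell_1})\rho_R(\tau)\rho_R(\Fr_{\ell_1})^{-1}=\rho_R(\tau)^{\ell_1}$.
\item The $C$-entries give $(\ell_1^{-1}-\ell_1)c(\tau)\in\m_R\cdot Rc(\tau)$, so $c(\tau)=0$ by Nakayama. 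This is where $\ell_1\not\equiv-1$ enters.
\item Then $a(\tau)=a(\tau)^{\ell_1}$ and $d(\tau)=d(\tau)^{\ell_1}$ force $a(\tau)=d(\tau)=1$.
\item Normalize $b_1$ to be a unit multiple of $b(\tau)$. This is allowed because $b(\tau)\equiv b\up1(\tau)b_1$ mod $\m_RB$.
\item The rule $b(\sigma\tau)=a(\sigma)b(\tau)+b(\sigma)d(\tau)$ then puts every $B$-entry of $\rho_R(I_{\ell_1})$ in $Rb_1$.
\end{itemize}
Now \cite[\S2.4]{WWE4} gives $J_1=m(Rb_1\otimes C)=(b_1c)$.

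A minor point: reducible deformations are unramified at $\ell_1$ because $\ell_1\not\equiv1\pmod p$, not because of the finite-flat condition.
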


\begin{proof}
    The first point is as in \cite[Prop.\ 5.1.7]{part1}, using the fact that $b \up 0 \cup c\up1 \ne 0$. The second point is a standard consequence of the technique of imposing conditions on pseudorepresentations explained in \cite[\S2.4]{WWE4}, after using the fact that $R\cdot b_1 \subset B$ contains the image of $I_{\ell_1}$ in $E$. 
\end{proof}

\begin{lem}
\label{lem:rho1}
    Assume that $b\up 0 \cup c\up1 \ne 0$.
    Let $\phi: R \to \F_p[\ep]/(\ep^2)$ be an irreducible tangent vector of $R$ that factors through $R_{\Gamma_0(N)}^\varep$, and let $D_1:G_{\Q,\cS} \to \F_p[\ep]/(\ep^2)$ be the associated pseudodeformation. There is a homomorphism $\rho_1: G_{\Q,\cS} \to E_1^\times$ such that $D_1=\psi(\rho_1)$ and such that 
    \begin{equation}
    \label{eq:rho1}
            \rho_1(\sigma) = \ttmat{\omega(1+a\up1(\sigma)\ep)}{b\up1(\sigma)}{c\up1(\sigma)}{1+d\up1(\sigma)}
    \end{equation}
    where the cocycles $b\up1$ and $c\up1$ were defined as in \S\ref{subsec:ANT} and the cochains $a\up1$ and $d\up1$ as in Proposition \ref{prop: a1 vanishing equivalences}.
\end{lem}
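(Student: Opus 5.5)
The plan is to push the universal Cayley--Hamilton representation $\rho_R: G_{\Q,\cS} \to E^\times$ forward along $\phi$. Then we identify the resulting coordinates with the cochains of \S\ref{subsec:ANT}.

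First, we build a map of GMAs $E \to E_1$ lying over $\phi$. Since $\phi$ is a tangent vector, it kills $\m_R^2$. So $E \to E\otimes_R \F_p[\ep]/(\ep^2)$ has $B$-, $C$-coordinates $B/\m_R B$ and $C/\m_R C$ once we quotient further by $\m_R B$ and $\m_R C$.

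We send $b_0 \mapsto 0$, $b_1 \mapsto 1$, and $c \mapsto 1$, with $B$ and $C$ landing in the $\F_p[\ep_0]=\F_p$ off-diagonal entries of $E_1$. We must check this respects the multiplication $m$. The product $b_1c$ must go to $\ep$ up to a unit, and $b_0c$ must go to $0$. By Proposition \ref{prop:B and C}\ref{part: Jred gens}, $\bar J^\red$ is generated by $b_1c$. Irreducibility of $\phi$ means $\phi(J^\red)\neq 0$, so $\phi(b_1c)=u\ep$ with $u\in\F_p^\times$, and we rescale $c$ to make $u=1$. For $b_0c$, note that $\phi$ factors through $R_{\Gamma_0(N)}^\varep$, where (as in \cite[\S4.3]{part1}) the unramified-or-Steinberg condition at $\ell_0$ forces $\phi(b_0c)=0$. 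Alternatively, $b_0c \in \m_R\cdot b_1c+pR$ modulo $p$, so $\phi(b_0c)\in \ep\phi(\m_R)=0$. Either way, the map $E\to E_1$ is a well-defined algebra homomorphism, and composing with $\rho_R$ gives $\rho_1$ with $\psi(\rho_1)=\phi\circ D_R=D_1$.

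Next, we identify the coordinates. The off-diagonal entries of $\rho_1$ are functions $G_{\Q,\cS}\to\F_p(\pm1)$, and multiplicativity of $\rho_1$ modulo $\ep$ makes them cocycles. By the perfect pairings $B/\m_RB\times H^1_f(\F_p(1))\to\F_p$ and $C/\m_RC\times H^1_f(\F_p(-1))\to\F_p$, and our choice of dual bases, these cocycles are $b\up1$ and $c\up1$, up to coboundaries and scalars. We absorb the coboundaries by conjugating $\rho_1$ by a diagonal-plus-off-diagonal unit of $E_1$; this does not change $\psi(\rho_1)$.

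The diagonal entries have the form $\omega(1+a\ep)$ and $1+d\ep$. Taking the $\ep$-part of the homomorphism condition gives $-da=b\up1\smile c\up1$ and $-dd=c\up1\smile b\up1$, using the cross multiplication $(b,c)\mapsto\ep bc$. The determinant is $\omega(1+(a+d)\ep)-\ep\, b\up1c\up1$, and the finite-flat condition together with the determinant being $\omega$ times a character unramified outside $\ell_0$-type data pins down $a+d$. We then set $d\up1=b\up1c\up1-a\up1$, adjusting $a$ by a cocycle as needed.

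Finally, we verify that $a$ satisfies (i) and (ii) of Lemma \ref{lem: a1 setup}. Property (i) is the relation $-da=b\up1\smile c\up1$ just obtained. Property (ii) is the main obstacle: it comes from the finite-flat condition at $p$, exactly as in \cite[\S4.2--4.3]{part1}. There one writes $\rho_1|_{G_p}$ in a basis adapted to the finite-flat structure, using that $c\up1|_p = d(x_{c\up1})$. Conjugating by $\sm{1}{0}{x_{c\up1}}{1}$ makes the $C$-coordinate vanish on $G_p$, and the upper-left entry then becomes $a+b\up1x_{c\up1}$. This entry must be unramified at $p$, because finite-flat extensions of $\omega$ by $\omega$ (or of $1$ by $1$) over $\F_p[\ep]$ have unramified diagonal deformations; this gives (ii). I would import this computation verbatim from \cite{part1}, since the $\ell_0$-condition is never used there. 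Any residual freedom in $a$ is a multiple of $a_0$, which is allowed since Proposition \ref{prop: a1 vanishing equivalences} only requires (i) and (ii).
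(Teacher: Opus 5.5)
Your argument is sound in outline, but it runs in the opposite direction from the paper's. The paper goes from cochains to tangent vectors. It starts from the explicit $\rho_1$ of \cite[Lem.\ 4.3.2]{part1}, built from $\mathbf{a}\up1$, so properties (i), (ii) and $\det=\omega$ hold by construction. It then cites two facts from \cite{part1}. First, by \cite[\S5.1]{part1}, every irreducible tangent vector of $R_{\Gamma_0(N)}^\varep$ is, up to scalar, this one plus a reducible one. Second, by \cite[Lem.\ 2.3.2]{part1}, the reducible direction is realized by $\mathbf{a}\up1\mapsto\mathbf{a}\up1+\gamma a_0$, $d\up1\mapsto d\up1-\gamma a_0$.

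You go from tangent vectors to cochains instead. You push $\rho_R$ forward along a GMA map $E\to E_1$ over $\phi$, which is the same device the paper uses for $E\otimes_R S\to E_S$ in the proof of Theorem \ref{thm: main}, and then read off the coordinates. This needs no knowledge of the tangent space of $R_{\Gamma_0(N)}^\varep$ and treats every irreducible $\phi$ uniformly. The cost is that you must re-derive (ii) from finite flatness; your conjugation by $\sm{1}{0}{x_{c\up1}}{1}$ does this correctly. In the paper, (ii) comes for free because $a_0$ is unramified at $p$. Your second justification of $\phi(b_0c)=0$ is the correct one. It uses both halves of Proposition \ref{prop:B and C}(i): the fact that $\bar J^\red$ is not generated by $b_0c$ is what puts the coefficient of $b_1c$ in $\m_R$.

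The step that needs repair is the determinant. In this paper $R$ does not fix the determinant. Finite flatness at $p$ together with $\ell_1\not\equiv 1 \pmod p$ only gives $\det D_1=\omega(1+\gamma a_0\ep)$ for some $\gamma\in\F_p$. What forces $\gamma=0$, and hence $d=b\up1c\up1-a$ on the nose, is the hypothesis that $\phi$ factors through $R_{\Gamma_0(N)}^\varep$. There the unramified-or-Steinberg conditions, together with finite flatness, force the determinant to be $\kappa$, whose image in $\F_p[\ep]/(\ep^2)$ is $\omega$. Your fallback of ``adjusting $a$ by a cocycle'' is not available. Once $\rho_1$ is fixed, changing $a$ (with or without a compensating change in $d$) changes the trace or the determinant, so $\psi(\rho_1)$ would no longer equal $D_1$.

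There is also a scalar issue. Rescaling $c$ so that $\phi(b_1c)=\ep$ makes the $C$-coordinate cohomologous to a multiple $u\,c\up1$, and no conjugation in $E_1$ removes $u$, because diagonal units scale the $B$- and $C$-coordinates inversely. So, like the paper, you obtain \eqref{eq:rho1} only after rescaling $\ep$, that is, for $\phi$ up to an $\F_p^\times$-scalar.
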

\begin{proof}
    Such a homomorphism $\rho_1$ is constructed in \cite[Lem.\ 4.3.2]{part1} for a particular choice of irreducible tangent vector; for this $\rho_1$, the cochain $a\up1$ is the particular choice $\mathbf{a}\up1$ defined in Lemma \ref{lem: a1 setup}. As explained in \cite[\S5.1]{part1}, every irreducible tangent vector differs from this one by a reducible tangent vector up to scalar. By \cite[Lem.\ 2.3.2]{part1}, this difference arises from taking the pseudorepresentation associated to a new $\rho_1$ as in \eqref{eq:rho1} arising from adding $\gamma a_0$ to $\mathbf{a}\up1$ for some $\gamma \in \F_p$ (and subtracting $\gamma a_0$ from $d\up1$). 
\end{proof}

\section{Deformations to $S$ and the vanishing of a triple Massey product}
\label{sec: main proof}
In this section, we prove Theorem \ref{thm: main}. By Proposition \ref{prop: a1 vanishing equivalences}, it suffices to show that a certain triple Massey product vanishes. Following the general strategy of \cite{WWE3,CarlA-inf} summarized in \S\ref{subsec: intro to defs and Massey products}, this Massey-vanishing follows from the existence of a suitable pseudodeformation.

Let $S = \displaystyle\frac{\F_p[y,z]}{(y^3,y^2-\mu z^2,yz)}$ and let $\phi_S: S \to \F_p[\ep]/(\ep^2)$ be $\phi_S(y)=\ep$ and $\phi_S(z)=0$, as in the introduction. We define the $S$-GMA $E_S$ by 
\[
E_S = \ttmat{S}{B_S}{C_S}{S},
\qquad 
\text{where}
\qquad 
B_S = \F_p \oplus \F_p[\epsilon_1], \ C_S=\F_p[\epsilon_1] 
\]
are $S$-modules with $S$ acting via $\phi_S$. Like the construction of $E_1$ in \eqref{eq: define E_i}, the multiplication in $E_S$ is $2\times 2$-matrix multiplication, using the action of $S$ on $B_S$ and $C_S$ via $\phi_S$ and the cross-diagonal multiplication map $m:B_S \otimes_S C_S \to S$ given by
\begin{equation}
    \label{eq: E_S cross-diagonal}
    m : (u,v) \otimes x \mapsto v_0x_0 y + (\eta u x_0+v_0x_1+v_1x_0)y^2 
\end{equation}
where $v=v_0+v_1\epsilon$ and $x=x_0+x_1\epsilon$ ($u, v_0,v_1,x_0,x_1 \in \F_p$),  for some arbitrary fixed $\eta \in \F_p$. The accompanying $S$-algebra homomorphism lying over $\phi_S : S \to \F_p[\ep_1]$ is 
\[
\theta_S : E_S \to E_1 \quad \text{given by} \quad \ttmat{a}{(b',b_0+b_1\epsilon)}{c_0+c_1\epsilon}{d} \mapsto \ttmat{\phi_S(a)}{b_0}{c_0}{\phi_S(d)}.
\]

\begin{proof}[Proof of Theorem \ref{thm: main}] Recall that the assumption $\rk_{\Z_p}\bT^{\circ,\varep}_{\Gamma_0(\ell_0)}=1$ implies that $a_0 \cup c\up1$ and $b\up1 \cup c\up1$ are nonzero, and hence we can apply the results of Section 2.
Let $\pi: R \rsurj S$ be such that $\pi(J^\red)=yS$ and $\phi_S \circ \pi$ factors through $R_{\Gamma_0(N)}^\varep$. This implies that $\phi_S \circ \pi$ is irreducible; let $\rho_1: G_{\Q,\cS} \to E_1^\times$ be the representation associated to it by Lemma \ref{lem:rho1}. By Lemma \ref{prop:B and C}, we have $\pi(b_1c)S=yS$ and $\pi(b_0c)S \subset y \m_S =\F_p y^2$.  We let the constant $\eta \in \F_p$ in the definition of $E_S$ be such that $\pi(b_0c)=\eta y^2$.

We will first construct an $S$-algebra homomorphism $E \otimes_R S \to E_S$; for this we repeat the argument of \cite[Lemma 6.1.3]{part1}. We first construct it as a map of $S$-modules, which amounts to constructing maps $B \otimes_R S \to B_S$ and $C \otimes_R S \to C_S$. The simpler map is $C \otimes_R S \to C_S$. Since $c$ generates $C$, there is a surjective map
\[
\pi_C: S \twoheadrightarrow C \otimes_R S, \quad \pi_C(s)=sc.
\]
If $s \in \ker(\pi_C)$, then $sc=0$, so $s \in \Ann_S(b_1c)=\Ann_S(y)=\ker(\phi_S)$. Hence, we can define the map $C \otimes_R S \to C_S$ by $sc \mapsto \phi(s)$. The construction of $B \otimes_R S \to B_S$ is similar, but more involved.
Since $\{b_0,b_1\}$ is a minimal generating set for $B$, there is a surjective map
\[
\pi_B:S^{\oplus 2} \twoheadrightarrow B \otimes_R S, \quad \pi_B(s_0,s_1)=s_0b_0+s_1b_1.
\]
Define a map
\[
\pi_{B_S}:S^{\oplus 2} \twoheadrightarrow B_S, \quad \pi_{B_S}(s_0,s_1)=(s_0 \bmod \m_S, \phi_S(s_1)).
\]
We will show that $\pi_{B_S}$ factors through $\pi_B$, thus defining a map $B \otimes_R S \to B_S$. To this end, let $(s_0,s_1) \in \ker(\pi_B)$. Since $B$ is not cyclic, we have $s_0,s_1 \in \m_S$, so $\pi_{B_S}(s_0,s_1)=(0,\phi(s_1))$, and we need to show that $s_1 \in \ker(\phi_S)=\Ann_S(b_1c)$.  Since $(s_0,s_1) \in \ker(\pi_B)$, we have $s_0b_0+s_1b_1=0$, which, multiplying by $c$, gives
\[
0=s_0b_0c+s_1b_1c.
\]
As $b_0c \in y\m_S$, we have $s_0b_0c \in y \m_S^2=0$, so the above equation simplifies to $0=s_1b_1c$. In other words, $s_1 \in \Ann_S(b_1c)$, as desired. Finally, we have to show that the $S$-module map $E \otimes_R S \to E_S$ is a ring homomorphism. This follows easily from the definitions, just as in the proof of \cite[Lem.\ 6.1.3]{part1}, using the definition of $\eta$.

Now, let $\rho_S: G_{\Q,\cS} \to E_S^\times$ be the composition of the universal map $\rho_R:G_{\Q,\cS} \to E^\times$ with the map $E \otimes_R S \to E_S$ constructed in the previous paragraph. The existence of this representation $\rho_S$ implies the vanishing of the required triple Massey product, just as in the proof of \cite[Lem.\ 6.2.9]{part1}. Indeed, by construction, $\theta_S \circ \rho = \rho_1$. Writing $\rho_S$ in coordinates as
\[
\rho_S = \ttmat{a_S}{b_S}{c_S}{d_S}
\]
we have $\phi_S \circ a_S=\omega(1+a\up1\ep)$ and $\phi_S \circ d_S = 1+d\up1 \ep$. Examining the $B$-coordinate, the condition that $\rho_S$ be a homomorphism implies that, for all $\sigma,\tau \in G_{\Q,\cS}$,
\begin{equation}
\label{eq:b_S hom}
  b_S(\sigma\tau) = \phi_S(a_S(\sigma))b_S(\tau) +b_S(\sigma)\phi_S(d_S(\tau)).
\end{equation}
For $\sigma \in G_{\Q,\cS}$, the element $b_S(\sigma)$ can be written as 
\[
b_S(\sigma)=(b'(\sigma),b\up1(\sigma)+b\up2(\sigma)\epsilon) \in B = \F_p \oplus \F_p[\ep_1].
\]
Thinking of $b\up2$ as a cochain $b\up2:G_\Q \to \F_p(1)$,  \eqref{eq:b_S hom} can be written in terms of coboundary equations and implies
\[
-d b\up2 = a\up1 \smile b\up1 +b\up1 \smile d\up 1,  
\]
and consequently 
\[
a\up1 \smile b\up1 +b\up1 \smile d\up 1 \text{ is a coboundary}. 
\]
Thus the map $\Cl_{T_0}(M)_{p,G} \to \Cl_{T}(M)_{p,G}$ of \eqref{eq: a1 at ell1 CFT} is injective by Proposition \ref{prop: a1 vanishing equivalences}.
\end{proof}

\section{Hecke algebras: irreducibility of a cuspidal tangent vector}
\label{sec: Hecke algs}
For the rest of the paper, we focus on a strategy for constructing the putative map $\pi: R \rsurj S$.
Our strategy is to show $\bT_{\Gamma_1(N)}^\circ$ is ``large enough'' that it admits a map $\pi' : \bT_{\Gamma_1(N)}^\circ \rsurj S$ such that the resulting $S$-valued pseudorepresentation produces a surjection $\pi: R \rsurj S$ satisfying the hypotheses of Theorem \ref{thm: main}. We summarize it in two steps:
\begin{enumerate}
\item The main result in this section (Proposition \ref{prop: cusp Gamma0(N) irreducible}) establishes that the Hecke algebra quotient $\bT^{\circ}_{\Gamma_1(N)} \rsurj \bT^{\circ}_{\Gamma_0(N)}/p\bT^{\circ}_{\Gamma_0(N)} \simeq \F_p[\ep_1]$ carries a pseudorepresentation  corresponding to an irreducible tangent vector in the sense of Lemma \ref{lem:rho1}. 
\item The main result in $\S\ref{sec:R=T}$ (Theorem \ref{thm: conjs imply result}) explains how the $R=\mathbb T$ conjectures formulated in \cite{HPWE} imply that the above map $\bT^{\circ}_{\Gamma_1(N)}\rsurj \F_p[\ep_1]$ can be lifted to a quotient isomorphic to $S$. 
\end{enumerate}
As highlighted in \S\ref{subsec: intro relax ell0}, this strategy crucially relies on using a larger, $\Lambda$-free Hecke algebra of level $\Gamma_1(N)$ since the cuspidal Hecke algebra of level $\Gamma_0(N)$ is not always sufficiently large to produce the Galois representations required to establish \eqref{eq: a1 at ell1 CFT}. However, a technical difficulty arises related to the choice of ramified Hecke operators: the approach in \cite{part1}, which was expanded upon to prove all results in \S\S\ref{sec: setup}--\ref{sec: main proof}, uses Atkin--Lehner operators $\mathbf{w}_\ell$; while there are substantial advantages to using an Atkin--Lehner localization---see \cite[\S1.4.3]{WWE5}---it is inconvenient to extend it to levels $\Gamma_1(N)$ because Atkin--Lehner operators $\mathbf{w}_\ell$ do not generally commute with non-trivial diamond actions of $\F_\ell^\times$. 

We begin this section by giving a brief overview of the Hecke algebras under consideration and the two choices for ramified Hecke operators; after the overview, we proceed with Step 1 in the strategy above.

\subsection{Hecke algebras and ramified Hecke operators}
\label{subsec: Hecke U vs w}

For a squarefree integer $n$ with $p \nmid n$ and $j = 0,1$, let $M_{\Gamma_j(n)}$ (resp. $S_{\Gamma_j(n)}$ denote the space of modular forms (resp. cusp forms) of weight $2$ and level $\Gamma_j(n)$ with $\Z_p$-integral $q$-expansions at the cusp $\infty$.
The Hecke algebras discussed in this paper are defined over $\Z_p$ as the image of Hecke operators acting on the $\Z_p$-modules $M_{\Gamma_j(n)}$ of modular forms for $n = \ell_0, N$; they are generated over $\Z_p$ by unramified Hecke operators $T_m$ for $(m,n) = 1$, diamond operators $\dia{d}$ for $d \in (\Z/n\Z)^\times$, and some choice of ramified Hecke operators at primes $\ell \mid n$. More specifically, for $n = \ell_0, N$, we consider the following two choices of ramified Hecke operators and localizations at appropriate Eisenstein ideals. 
\begin{itemize}[leftmargin=1.1em]
    \item As in \S\ref{subsec: intro Mazur}, $\bT_{\Gamma_0(n)}^\varep$ denotes the Hecke algebra that includes the Atkin-Lehner involutions $\mathbf{w}_\ell$ for $\ell\,|\,n$ and has been localized at the Eisenstein maximal ideal generated by $p$ and, letting $\sigma$ denote the sum-of-divisors function,  
    \begin{equation} \label{eq: eigensystem w}
T_m -\sigma(m) \text{ for } (m,n) = 1, \text{ and } \mathbf{w}_\ell + 1 \text{ for primes }\ell \mid n. 
\end{equation}
This choice of Eisenstein localization is not necessary for the arguments in \S\S\ref{subsec: stabilization and cusps}-\ref{subsec: cuspidal tangent irreducible conclusion}; we do this to ensure consistent and minimally complicated notation. 
\item For $j=0,1$, $\bT_{\Gamma_j(n)}$ denotes the Hecke algebra that includes $T_\ell$ for primes $\ell \mid n$---as is common, we denote them by $U_\ell$---and has been localized at the Eisenstein maximal ideal generated by $p$ along with 
\begin{equation} \label{eq: eigensystem}
T_m -\sigma(m) \text{ for } (m,n) = 1, \dia{d} = 1, U_\ell - 1 \text{ for primes }\ell \mid n. 
\end{equation}
Each of these Hecke algebras admits a natural $\Lambda$-algebra structure arising from the diamond action (where $\Lambda$ was defined in \S\ref{subsec: intro tame family}), and by Lemma \ref{lem: freeness} we have $\bT_{\Gamma_1(n)} \otimes_\Lambda \Lambda/z\Lambda \cong \bT_{\Gamma_0(n)}$ and $\bT_{\Gamma_1(n)}^\circ \otimes_\Lambda \Lambda/z\Lambda \cong \bT_{\Gamma_0(n)}^\circ$. 
\end{itemize}
For $\bT_{\Gamma_0(n)}^\varep$ or $\bT_{\Gamma_j(n)}$, denote their quotient acting faithfully on cusp forms by $\bT_{\Gamma_0(n)}^{\circ,\varep}$ or $\bT_{\Gamma_j(n)}^\circ$, respectively. Since $z \in \Lambda$ generates its augmentation ideal, there is an isomorphism $\Lambda/z\Lambda \cong \Z_p$. Henceforth, we write $- \otimes_\Lambda \Z_p$ for $- \otimes_\Lambda \Lambda/z\Lambda$.

Now, for level $\Gamma_0(n)$, the Hecke algebras $\bT_{\Gamma_0(n)}^\varep$ and $\bT_{\Gamma_0(n)}$ are often closely related. Specifically, when acting on eigenforms that are new at a prime $\ell\mid N$, they satisfy the relation $\mathbf{w}_{\ell}=-U_{\ell}$, meaning the localization at $\mathbf{w}_{\ell} = -1$ matches the localization at $U_\ell = 1$. On eigenforms that are old at $\ell$, a discrepancy arises essentially when the Hecke polynomial at $\ell$ has repeated roots mod $p$. In general, the passage between $\mathbf{w}_\ell$- and $U_\ell$-operators is a subtle issue, which is explored in the upcoming paper \cite{APAW} in order to 
extend certain $R = \bT$ results to the level $\Gamma_1(n)$-setting. 

Fortunately, for the purposes of this paper, there is a simple resolution of this difference: we only need to pass from $\mathbf{w}$-operators to $U$-operators when considering the cuspidal Hecke algebras $\bT_{\Gamma_0(n)}^{\circ,\varep}$ and $\bT_{\Gamma_j(n)}^\circ$. The following lemma establishes a natural isomorphism between them. 

\begin{lem}
\label{lem: natural iso} 
Under the running assumptions (\S\ref{subsec: conventions}), there is an isomorphism
\[
\bT_{\Gamma_0(N)}^\circ\simeq\bT_{\Gamma_0(N)}^{\circ,\varep}\]
characterized by $T_{m}\mapsto T_{m}$ for every $(m,N)=1$. 
\end{lem}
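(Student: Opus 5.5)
The plan is to compare both cuspidal Hecke algebras with a single algebra and to analyse the cuspidal eigenforms in each localization. Both $\bT_{\Gamma_0(N)}^\circ$ and $\bT_{\Gamma_0(N)}^{\circ,\varep}$ are reduced and finite flat over $\Z_p$. Both are quotients of the Hecke algebra generated by the $T_m$ with $(m,N)=1$ together with ramified operators, acting on $S_{\Gamma_0(N)}\otimes\Z_p$. So it suffices to show two things. First, the two localizations cut out the same set of Galois orbits of normalized eigenforms, meaning systems of eigenvalues for $T_m$ with $(m,N)=1$. Second, in each algebra the ramified operators $U_\ell$ (resp.\ $\mathbf{w}_\ell$) already lie in the $\Z_p$-subalgebra $\bT'$ generated by the $T_m$ with $(m,N)=1$. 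Granting both, each of $\bT_{\Gamma_0(N)}^\circ$ and $\bT_{\Gamma_0(N)}^{\circ,\varep}$ equals the image of $\bT'$ acting on the same $\Z_p$-summand of cusp forms. This gives the isomorphism, characterized by $T_m\mapsto T_m$.

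First I would enumerate the cuspidal forms in each localization. By the running assumptions, $\rk\bT_{\Gamma_0(\ell_0)}^{\circ,\varep}=1$ and $\rk\bT_{\Gamma_0(N)}^{\circ,\varep}=2$. So the $\varep$-side consists of one newform $f$ of level $N$, new at both primes with $\mathbf{w}_{\ell_i}=-1$ mod $\m$, together with the $\ell_1$-old space of the unique Eisenstein-congruent form $g$ of level $\ell_0$. That old space contributes exactly one rank (the other $\ell_1$-stabilization lies outside $\mathbf{w}_{\ell_1}=-1$). The new-at-$N$ forms are the same on both sides, since $\mathbf{w}_\ell=-U_\ell$ on them; for this I would cite the relation stated in the preceding discussion, together with the Eisenstein-ideal localization of the paper. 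Forms that are old at $\ell_0$ do not occur on either side, since $\bT^\circ_{\Gamma_0(\ell_1)}$ is zero at the Eisenstein ideal, as $\ell_1\not\equiv 1\pmod p$.

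For $g$ on the $U$-side, the $\ell_1$-stabilizations $g_\alpha,g_\beta$ have $U_{\ell_1}$-eigenvalues equal to the roots of $X^2-a_{\ell_1}(g)X+\ell_1$. This polynomial reduces to $(X-1)(X-\ell_1)$ modulo $\m$. Since $\ell_1\not\equiv 1\pmod p$, the roots are distinct mod $p$, so by Hensel exactly one stabilization has $U_{\ell_1}\equiv1$. Thus the $U$-localization also sees exactly $f$ and one copy of $g$, matching the $\varep$-side. Here the hypothesis $\ell_1\not\equiv\pm1$ is what rules out the repeated-root discrepancy.

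Next I would show that the ramified operators lie in $\bT'$, using the generation argument below. The rank-two algebra $\bT'\otimes\Q_p$ is $K_f\times K_g$, because $f$ and $g$ have different eigenvalue systems away from $N$ (one is new of level $N$, the other old). Hence $U_{\ell_1}$ and $\mathbf{w}_{\ell_1}$ lie in $\bT'\otimes\Q_p$, and integrality requires care. The operators $U_{\ell_0}$ and $\mathbf{w}_{\ell_0}$ act by $\pm1$ on both forms: $U_{\ell_0}=1$, since $g$ is new at $\ell_0$ with $U_{\ell_0}=-\mathbf{w}_{\ell_0}=1$, and likewise for $f$. So they are trivially in $\bT'$. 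Likewise $\mathbf{w}_{\ell_1}=-1$ on both components, since the relevant stabilization of $g$ is the $\mathbf{w}_{\ell_1}=-1$ combination. The genuine obstacle is $U_{\ell_1}$. It acts by $1$ on $f$ and by the unit root $\alpha$ on $g$, with $\alpha\equiv1$ mod $\m$ but possibly $\alpha\neq1$.

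To handle $U_{\ell_1}$, I would use the relation $U_{\ell_1}^2-T'_{\ell_1}U_{\ell_1}+\ell_1=0$, where $T'_{\ell_1}$ is the level-$\ell_0$ Hecke operator acting on the old part. A cleaner route is to show that $U_{\ell_1}$ satisfies a polynomial over $\bT'$ with a simple root mod $\m$: $(U_{\ell_1}-1)(U_{\ell_1}-\ell_1)=0$ holds modulo an ideal of $\bT'$, and $1-\ell_1$ is a unit. Hensel's lemma in the complete local ring $\bT'$ then puts $U_{\ell_1}$ inside $\bT'$. The expected hard step is making this rigorous, that is, verifying that the coefficient $a_{\ell_1}$ on the $g$-component comes from $\bT'$. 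This should follow since $a_{\ell_1}(g)=\alpha+\ell_1\alpha^{-1}$ is a trace of Frobenius, and by Chebotarev it lies in the span of the traces $T_m$.
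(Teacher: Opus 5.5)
Your strategy matches the paper's, which compares the two algebras through their normalization maps. You use the same ingredients: both forms are $\ell_0$-new, so $U_{\ell_0}=1$; $\mathbf{w}_{\ell_0}=\mathbf{w}_{\ell_1}=-1$ are constant; and $\ell_1\not\equiv 1\pmod p$ together with Hensel puts $U_{\ell_1}$ in the subalgebra $\bT'$ generated by the $T_m$, $(m,N)=1$. Your enumeration of the eigenforms in each localization is correct.

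One harmless inaccuracy: the two localizations do not cut out the same $\Z_p$-summand of cusp forms. The $g$-contribution is $g(\tau)-(\ell_1/\alpha)g(\ell_1\tau)$ on the $U$-side and $g(\tau)-\ell_1 g(\ell_1\tau)$ on the $\varep$-side, and these differ because $\alpha\neq 1$ (the Ramanujan bound gives $a_{\ell_1}(g)\neq 1+\ell_1$). What you actually need, and what your first criterion says, is that $T_m$ has the same image $(a_m(f),a_m(g))$ in $\Z_p\times\Z_p$ on both sides.

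The genuine gap is the step you flag as hard; as you formulate it, it does not close. A polynomial $X^2-tX+\ell_1$ with $t\in\bT'$ must annihilate $U_{\ell_1}=(1,\alpha)$ on \emph{both} components, so you need $t=(1+\ell_1,\,a_{\ell_1}(g))$. Knowing only that $a_{\ell_1}(g)$ is the $g$-component of some element of $\bT'$ says nothing about the $f$-component. Indeed $\bT'=\{(x,y)\in\Z_p\times\Z_p : x\equiv y \bmod p^k\}$, where $p^k$ is the congruence modulus between $f$ and $g$. So the statement $U_{\ell_1}\in\bT'$ is exactly the level-raising congruence $\alpha\equiv 1\pmod{p^k}$, which is the whole content of the lemma and cannot be extracted from $g$ alone.

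The missing input is the local behaviour of $f$ at $\ell_1$:
\begin{itemize}
\item Take a Frobenius lift $\sigma\in G_{\ell_1}$ and set $t=\Tr_D(\sigma)$ for the Hecke-valued pseudorepresentation $D$. This lies in $\bT'$ by Chebotarev and continuity.
\item The $g$-component of $t$ is $a_{\ell_1}(g)$.
\item The $f$-component of $t$ is $1+\ell_1$, because $f$ is Steinberg at $\ell_1$ with $U_{\ell_1}=1$, so $\rho_f|_{G_{\ell_1}}$ is an extension of $1$ by $\kappa$. Also ${\det}_D(\sigma)=\ell_1$.
\end{itemize}
Then $P(X)=X^2-tX+\ell_1\in\bT'[X]$ kills $U_{\ell_1}$ and reduces to $(X-1)(X-\ell_1)$ modulo $\m$. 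Hensel's lemma in $\bT'$ gives a root $u\equiv 1$ in $\bT'$. Uniqueness of the root congruent to $1$ in the local ring $\bT^\circ_{\Gamma_0(N)}$ then gives $U_{\ell_1}=u\in\bT'$: since $1-\ell_1$ is a unit, $P(U_{\ell_1})-P(u)=(U_{\ell_1}-u)(U_{\ell_1}+u-t)$ with the second factor a unit. This is the argument the paper has in mind, compare its treatment of $u_{\ell_1}$ just before Theorem \ref{thm: conjs imply result}.
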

\begin{proof}
Following the argument in \cite[App.\ A]{WWE5}, we can consider normalization maps for  $\bT_{\Gamma_0(N)}^\circ$ and $\bT_{\Gamma_0(N)}^{\circ,\varep}$. The key points that make this comparison straightforward is that both of these newforms are $\ell_0$-new (so $U_{\ell_0}$ is constant), and that $\ell_1\not\equiv 1\pmod{p}$ (so $U_{\ell_1}$ is contained in the subalgebra generated by unramified Hecke operators). 
\end{proof}

\begin{rem}\label{rem: Hecke w vs U}Going forward, it will be important to keep track of the choice of ramified Hecke operators in Hecke algebras of levels $\Gamma_0(n)$; all Hecke algebras of level $\Gamma_1(n)$ include $U_\ell$.  For the sake of clarity, we emphasize that the assumption in \eqref{eq: min rk} that $\rk_{\Z_p} \bT_{\Gamma_0(N)}^\circ = 2$ implies that
$\rk_{\Z_p} \bT_{\Gamma_0(N)}^\varep = 3$ and $\rk_{\Z_p} \bT_{\Gamma_0(N)} = 4$.\end{rem}

\subsection{Galois pseudorepresentations attached to Hecke eigenforms}
\label{subsec: attach Gal reps to eigenforms}
As is well-known, a normalized modular Hecke eigenform $f$ of weight 2 and level $\Gamma_1(n)$ with $q$-series expansion $f = \sum_{i \geq 0} a_i(f)q^i$ and nebencharacter $\chi$ gives rise to Galois representations $\rho_f : G_{\Q,\cS} \to \GL_2(\oQ_p)$ characterized up to semi-simplification by $\Tr \rho_f(\Fr_q) = a_q(f)$, $\det(\Fr_q) = q\chi(q)$ for all primes $q \nmid Np$; here $\Fr_q$ denotes a choice of Frobenius element at $q$. Over non-Eisenstein congruence classes, one can show that these representations interpolate to a two-dimensional Galois representation valued in the corresponding Hecke algebra. In contrast, for Eisenstein congruence classes, only the trace and determinant functions of these representations interpolate well in general. In other words, taking either $\bT=\bT_{\Gamma_j(n)}$ ($j=0,1$) or $\bT=\bT_{\Gamma_0(n)}^\varep$,
we can make sense of a pseudorepresentation $D : G_{\Q,\cS} \to \bT$ such that 
\[
\Tr_D(\Fr_q) = T_q, \qquad {\det}_D(\Fr_q) = q\dia{q}.
\]
This interpolates the pseudorepresentations $\psi(\rho_f)$ associated to the Hecke eigenforms $f$ parameterized by $\bT$: we have $\pi_f \circ D = \psi(\rho_f)$, where $\pi_f : \bT\to \oQ_p$ encodes the Hecke eigensystem of $f$. 

Because all the Hecke algebras we consider are localized at the unramified Eisenstein eigensystem 
\[
T_m \mapsto \sigma(m) \text{ for } (m,n) = 1, \;\;\;\dia{d} \mapsto 1,
\]
their associated pseudorepresentation is a deformation of the fixed pseudorepresentation $\Db = \psi(\omega \oplus 1) : G_{\Q,\cS} \to \F_p$. Moreover, 
$\rho_f\vert_p$ is finite-flat (see e.g.\ \cite[Thm.\ 3.1(f), pg.~86]{DDT1994}). Thus, by the theory of conditions on pseudorepresentations set up in \cite{WWE4} and applied here in \S\ref{subsec: set up R} to construct the universal finite-flat-at-$p$ deformation $D_R : G_{\Q,\cS} \to R$ of $\Db$, the pseudorepresentation $D : G_{\Q,\cS} \to \bT$ gives rise to a local homomorphism $R \to \bT$. From now on, we consider all Hecke algebras defined in \S\ref{sec: Hecke algs} to be $R$-algebras.

The running assumption in \eqref{eq: min rk} that $\rk_{\Z_p} \bT^\circ_{\Gamma_0(N)}=2$ implies that there exists an isomorphism 
\[
\bar \bT^\circ_{\Gamma_0(N)} \simeq \F_p[\ep_1]=\F_p[\ep]/(\ep^2).
\]
Thus, the map $R \to \bar\bT^\circ_{\Gamma_0(N)}$ can be thought of as a tangent vector for $R$ up to $\F_p^\times$-scalar. In \S\S\ref{subsec: stabilization and cusps}--\ref{subsec: cuspidal tangent irreducible conclusion}, we prove:

\begin{prop}
    \label{prop: cusp Gamma0(N) irreducible}
    The tangent vector $R \to \bar\bT^\circ_{\Gamma_0(N)} \simeq \F_p[\ep_1]$ is irreducible in the sense explained in \S\ref{subsec: set up R}. 
\end{prop}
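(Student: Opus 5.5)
We must show that the composite $R \to \bar\bT^\circ_{\Gamma_0(N)} \simeq \F_p[\ep_1]$ does not kill $J^\red$, i.e.\ that $\bar\bT^\circ_{\Gamma_0(N)}$ is not a quotient of $R^\red$. The plan is to argue by contradiction. Suppose the tangent vector is reducible. Then the pseudorepresentation $D \otimes \F_p[\ep_1]$ is a sum of two characters $\chi_1 \oplus \chi_2 : G_{\Q,\cS} \to \F_p[\ep_1]^\times$ deforming $\omega$ and $1$. Each is a finite-flat-at-$p$ deformation, so $\chi_1 = \omega(1+\alpha\ep)$ and $\chi_2 = 1+\delta\ep$ with $\alpha,\delta \in Z^1(G_{\Q,\cS},\F_p)$, which is spanned by $a_0$ and $a_p$. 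Finite-flatness forces the $a_p$-components to vanish in the appropriate way. The determinant condition $\det_D(\Fr_q) = q\dia{q}$ with trivial diamonds at level $\Gamma_0(N)$ gives $\alpha+\delta=0$. Hence reducibility means $T_q \equiv 1+q + \gamma\,\ep\,(q-1)\log_{\ell_0}(q)$ for some $\gamma\in\F_p$ and all $q\nmid Np$.

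Next I would compare with Eisenstein series. Write $\bT^\circ_{\Gamma_0(N)}$, which is free of rank $2$ over $\Z_p$ (Remark \ref{rem: Hecke w vs U} together with Lemma \ref{lem: natural iso}). It accounts for the $\ell_0$-level cusp form (old at $\ell_1$, since $\rk\bT^\circ_{\Gamma_0(\ell_0)}=1$) and the $N$-newform. Because $\bT^\circ_{\Gamma_0(N)}/p \cong \F_p[\ep_1]$ is generated by unramified $T_q$ (Lemma \ref{lem: natural iso}, $\ell_1\not\equiv 1$), $\ep$ is a nonzero multiple of the image of some $T_q-(1+q)$. So reducibility says that $\ep$ lies in the $\F_p$-span of the images of $(q-1)\log_{\ell_0}(q)$. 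I would then show this is incompatible with the structure of the cuspidal quotient. The cleanest route is the one-dimensional level-$\ell_0$ situation: restricting to the $\ell_0$-old component and using Mazur's result that $\bT^\circ_{\Gamma_0(\ell_0)} \cong \Z_p$ with Eisenstein ideal generated by $T_q-(1+q)$ for a good prime $q$, the reduction modulo $p$ of cusp eigenvalue differences is controlled by Mazur's and Merel's formulas. Alternatively, I would use the characterization in \cite{WWE3} that the cuspidal tangent vector at level $\ell_0$ is irreducible exactly when $b\up0 \cup c\up1 \neq 0$ (equivalent to rank $1$).

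The decisive step, and the expected main obstacle, is to transfer that level-$\ell_0$ irreducibility to level $N$ through the stabilization. The relevant quotient $\bar\bT^\circ_{\Gamma_0(N)} \to \bar\bT^\circ_{\Gamma_0(\ell_0)} = \F_p$ is only the residue field, so it carries no tangent information. I would therefore instead show that the reducible locus of $R$ restricted to $\Gamma_0(N)$-cusp forms has length $1$. To do this, compare $\bT_{\Gamma_0(N)}$, of rank $4$, with its Eisenstein quotient via the congruence module between cusp forms and Eisenstein series. Compute, using the constant terms at cusps of the $\ell_1$-stabilized Eisenstein series (the content of \S\ref{subsec: stabilization and cusps}), that the cuspidal Eisenstein ideal $I^\circ \subset \bT^\circ_{\Gamma_0(N)}$ has $\bT^\circ/I^\circ \cong \Z_p/p^{v}$ with $v$ equal to the $p$-adic valuation of the numerator of $(\ell_0-1)/12$, which is $1$ since the rank is $1$. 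Given that, the reducible quotient $\bar\bT^{\circ}/J^\red$, which surjects onto $\bT^\circ/(p,I^\circ) = \F_p$ and is dominated by it (reducible deformations with finite-flat and determinant conditions being Eisenstein, via the argument above with $\gamma$), is $\F_p$ rather than $\F_p[\ep_1]$. Hence $J^\red$ maps onto $\ep\F_p$, proving irreducibility. The delicate points are showing that reducible pseudodeformations land in the Eisenstein ideal (handling $\gamma\neq0$ via the $U_{\ell_0}$ eigenvalue $1$ and $\ell_0$-newness) and the cusp constant-term computation.
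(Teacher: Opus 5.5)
Your first step is correct, and it recovers one input of the paper's argument. A reducible tangent vector with fixed determinant that is finite-flat at $p$ sends $T_q \mapsto 1+q+\gamma\ep(q-1)\log_{\ell_0}(q)$, so such vectors form a single line parametrized by $\gamma$. The case $\gamma=0$ is excluded at once, because $\bar\bT^\circ_{\Gamma_0(N)}$ is generated by the $T_q$.

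The gap is in ruling out $\gamma\neq 0$. Your assertion that $\bar\bT^\circ_{\Gamma_0(N)}/\bar J^\red$ is dominated by $\bT^\circ_{\Gamma_0(N)}/(p,I^\circ)$ just restates the proposition. Indeed, $(p,I^\circ)$ is the maximal ideal whatever $\bT^\circ/I^\circ$ turns out to be, so the congruence-module computation contributes nothing modulo $p$. (The claim that rank one forces $v=1$ is also not justified.)

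Nor can $\ell_0$-newness and $U_{\ell_0}=1$ kill $\gamma$. The line of $\gamma\neq 0$ reducible vectors is actually realized by $t_{\ell_0}\colon R\rsurj\bar\bT^\varep_{\Gamma_0(\ell_0)}\simeq\F_p[\ep_1]$, the level-$\ell_0$ cusp form glued to the Eisenstein series; this is the tangent-space shadow of Mazur's good primes. That vector factors through $R^\varep_{\Gamma_0(\ell_0)}$, a quotient of $R^\varep_{\Gamma_0(N)}$, so it satisfies every local condition your cuspidal vector $t_c$ does. No local argument can separate the two.

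What is missing is a global reason why $t_c$ is not a multiple of $t_{\ell_0}$. The paper gets this from transversality of the old and cuspidal quotients. Because $\iota^\varep$ multiplies constant terms by the unit $1-\ell_1$, Lemma~\ref{lem: transverse old and cusp} gives $\bT^\varep_{\Gamma_0(\ell_0)}\otimes_{\bT^\varep_{\Gamma_0(N)}}\bT^{\circ,\varep}_{\Gamma_0(N)}\cong\bT^{\circ,\varep}_{\Gamma_0(\ell_0)}\cong\Z_p$. Hence $\bar\bT^\varep_{\Gamma_0(N)}\cong\bar\bT^\varep_{\Gamma_0(\ell_0)}\times_{\F_p}\bar\bT^{\circ,\varep}_{\Gamma_0(N)}$, whose two-dimensional tangent space has $t_{\ell_0}$ and $t_c$ as a basis.

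The reducible subspace is one-dimensional and contains $t_{\ell_0}$; this is \cite[Prop.\ 5.1.8]{part1}, or equally your own $\gamma$-computation. Therefore $t_c$ is irreducible, and Lemma~\ref{lem: natural iso} transfers the conclusion from $\bT^{\circ,\varep}_{\Gamma_0(N)}$ to $\bT^\circ_{\Gamma_0(N)}$.

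You did invoke the constant-term computation of \S\ref{subsec: stabilization and cusps}, but to compute an Eisenstein congruence module. The place it is indispensable is this transversality.
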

Note that, in particular,  as the resulting tangent vector is non-zero, this implies that $R\to \bT^{\circ}_{\Gamma_0(N)}$ is surjective. The surjectivity of this map that is equivalent to $\bT_{\Gamma_0(N)}^{\circ}$ being generated by unramified Hecke operators.

\subsection{Behavior of cuspidal quotients with respect to stabilization}
\label{subsec: stabilization and cusps}
Let $S_{\Gamma_0(n)}^\varep \subset M_{\Gamma_0(n)}^\varep$ represent the respective subspaces of $S_{\Gamma_j(n)}\subset M_{\Gamma_j(n)}$ such that the Atkin--Lehner involutions $\mathbf{w}_\ell$ acts by $-1$ for all primes $\ell \mid n$ and the unramified Hecke action is localized at the Eisenstein eigensystem $T_m \mapsto \sigma(m)$ for $(m,n)=1$. 
As in the proof of \cite[Lem.\ 3.2.5]{ohta2014}, the fact that the Atkin--Lehner group acts transitively on the set of cusps of the closed modular curve $X_0(n)$ over $\Z_p$ of level $\Gamma_0(n)$ implies that there is a short exact sequence
\begin{equation}
\label{eq:res seq}
  0 \to S_{\Gamma_0(n)}^\varep\to M_{\Gamma_0(n)}^\varep\xrightarrow{a_0} \Z_p \to 0,  
\end{equation}
where $a_0(f)$ is the constant term at the infinity cusp. As in \cite[Thm.\ 2.4.6]{ohta2014}, there is a perfect duality between $M_{\Gamma_0(n)}^\varep$ and $\bT_{\Gamma_0(n)}^\varep$ given by $(f,t)\mapsto a_1(tf)$, and similarly for cusp forms. By \eqref{eq:res seq}, this implies that there is a unique element $t_0 \in \bT_{\Gamma_0(n)}^\varep$ such that $a_1(t_0f)=a_0(f)$ for all $f \in M_{\Gamma_0(n)}$, and that $\bT^{\circ,\varep}_{\Gamma_0(n)}=\bT^{\varep}_{\Gamma_0(n)}/(t_0)$. 

Now, let $\ell$ be a prime with $\ell \nmid n$. Let $\iota: M_{\Gamma_0(n)} \to M_{\Gamma_0(n\ell)}$ denote the inclusion that is the identity on $q$-expansions. This induces an inclusion $\iota^\varep: M_{\Gamma_0(n)}^\varep\to M_{\Gamma_0(n\ell)}^\varep$ defined by $\iota^\varep(f)=\iota(f)-\mathbf{w}_\ell \circ\iota(f)$. As a function of $\tau$ in the upper-halfplane, one can easily compute that $\iota^\varep(f)(\tau)=f(\tau)-\ell f(\ell\tau)$. In particular, $a_0(\iota^\varep(f))=(1-\ell)a_0(f)$, so the sequences \eqref{eq:res seq} for $n$ and $n\ell$ fit into a commutative diagram 
\[
\xymatrix@R=1.2em{
0 \ar[r] & S_{\Gamma_0(n)}^\varep\ar[r] \ar[d]^-{\iota^\varep} &
M_{\Gamma_0(n)}^\varep\ar[r]^-{a_0} \ar[d]^-{\iota^\varep} &
\Z_p \ar[r] \ar[d]^-{1-\ell} & 0 \\
0 \ar[r] & S_{\Gamma_0(n\ell)}^\varep\ar[r]  &
M_{\Gamma_0(n\ell)}^\varep\ar[r]^-{a_0}  &
\Z_p \ar[r] & 0.
}
\]
If $\pi^\varep: \bT_{\Gamma_0(n\ell)}^{\varep} \to \bT_{\Gamma_0(n)}^{\varep}$ denotes the map dual to $\iota^\varep$, then the diagram implies that $\pi^\varep(t_0)=(1-\ell)t_0$, so that
\[
\bT_{\Gamma_0(n)}^{\varep} \otimes_{\bT_{\Gamma_0(n\ell)}^{\varep}} \bT_{\Gamma_0(n\ell)}^{\circ,\varep} = \bT_{\Gamma_0(n)}^{\varep}/(\pi^\varep(t_0)) =\bT_{\Gamma_0(n)}^{\varep}/((1-\ell)t_0).
\]
In particular, we will use the following consequence of the calculation:
\begin{lem}
    \label{lem: transverse old and cusp}
    In the setting described above, if $\ell \not \equiv 1 \pmod{p}$, then $\bT_{\Gamma_0(n)}^{\varep} \otimes_{\bT_{\Gamma_0(n\ell)}^{\varep}} \bT_{\Gamma_0(n\ell)}^{\circ,\varep} \cong \bT_{\Gamma_0(n)}^{\circ,\varep}$.
\end{lem}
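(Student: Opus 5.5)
The argument is essentially bookkeeping on top of the computation immediately preceding the statement, which already shows
\[
\bT_{\Gamma_0(n)}^{\varep} \otimes_{\bT_{\Gamma_0(n\ell)}^{\varep}} \bT_{\Gamma_0(n\ell)}^{\circ,\varep} = \bT_{\Gamma_0(n)}^{\varep}/((1-\ell)t_0).
\]
It therefore suffices to show that $(1-\ell)t_0$ and $t_0$ generate the same ideal of $\bT_{\Gamma_0(n)}^\varep$. Once that is known, we apply the identification $\bT^{\circ,\varep}_{\Gamma_0(n)}=\bT^{\varep}_{\Gamma_0(n)}/(t_0)$, which was obtained from the exact sequence \eqref{eq:res seq} together with the perfect duality between $M^\varep_{\Gamma_0(n)}$ and $\bT^\varep_{\Gamma_0(n)}$.

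The key observation is that $1-\ell \in \Z_p$. By hypothesis $\ell\not\equiv 1 \pmod p$, so $1-\ell\in\Z_p^\times$. Its image in $\bT_{\Gamma_0(n)}^\varep$ is therefore a unit, and hence $((1-\ell)t_0)=(t_0)$.

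The only steps needing care are the ones feeding into the displayed identity. First, I would recheck that the quotient $\bT^{\circ,\varep}_{\Gamma_0(n\ell)}$ is exactly $\bT^\varep_{\Gamma_0(n\ell)}/(t_0)$, where $t_0$ is the constant-term functional at level $n\ell$; this uses \eqref{eq:res seq} at level $n\ell$. Second, base change along $\pi^\varep$ then gives $\bT^\varep_{\Gamma_0(n)}/(\pi^\varep(t_0))$. Third, the identity $\pi^\varep(t_0)=(1-\ell)t_0$ follows by dualizing the commutative diagram. Concretely, for $f\in M^\varep_{\Gamma_0(n)}$ one has
\[
a_1(\pi^\varep(t_0)f)=a_1(t_0\,\iota^\varep f)=a_0(\iota^\varep f)=(1-\ell)a_0(f).
\]
This uses that $\iota^\varep$ is compatible with the Hecke action through $\pi^\varep$ and preserves $a_1$, which holds since $\iota^\varep(f)=f(\tau)-\ell f(\ell\tau)$.

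The main, if mild, obstacle is justifying that $\pi^\varep$ is a well-defined ring map compatible with the localizations at $\mathbf{w}=-1$. One also needs that $\iota^\varep$ lands in the $\varep$-part and has $a_1(\iota^\varep f)=a_1(f)$. The latter holds because the $\ell f(\ell\tau)$ term contributes only to coefficients indexed by multiples of $\ell$. Granting the setup described before the lemma, the conclusion follows.
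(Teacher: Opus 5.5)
Your proposal is correct and follows the paper's argument exactly. The paper obtains $\bT_{\Gamma_0(n)}^{\varep} \otimes_{\bT_{\Gamma_0(n\ell)}^{\varep}} \bT_{\Gamma_0(n\ell)}^{\circ,\varep} = \bT_{\Gamma_0(n)}^{\varep}/((1-\ell)t_0)$ by dualizing the constant-term diagram (via $\pi^\varep(t_0)=(1-\ell)t_0$), and the lemma follows because $1-\ell\in\Z_p^\times$ when $\ell\not\equiv 1 \pmod{p}$, so $\bT_{\Gamma_0(n)}^{\varep}/((1-\ell)t_0)=\bT_{\Gamma_0(n)}^{\varep}/(t_0)=\bT_{\Gamma_0(n)}^{\circ,\varep}$.
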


\subsection{The cuspidal tangent vector is irreducible} 
\label{subsec: cuspidal tangent irreducible conclusion}
We now specialize the discussion to the case of interest in this paper where $n=\ell_0$ and $\ell=\ell_1$ (so that $n\ell=N$). 
Since $\ell_1 \not \equiv 1 \pmod{p}$, we have the isomorphism $\bT_{\Gamma_0(\ell_0)}^{\varep} \otimes_{\bT_{\Gamma_0(N)}^{\varep}} \bT_{\Gamma_0(N)}^{\circ,\varep} \cong \bT_{\Gamma_0(\ell_0)}^{\circ,\varep}$ from Lemma \ref{lem: transverse old and cusp}. Since the product map
\[
\bT_{\Gamma_0(N)}^{\varep} \to \bT_{\Gamma_0(\ell_0)}^{\varep} \times \bT_{\Gamma_0(N)}^{\circ,\varep}
\]
is injective (this is easily checked after inverting $p$) and the image is the fiber product over the maximal common quotient $\bT_{\Gamma_0(\ell_0)}^{\varep} \otimes_{\bT_{\Gamma_0(N)}^{\varep}} \bT_{\Gamma_0(N)}^{\circ,\varep}$, Lemma \ref{lem: transverse old and cusp} implies that this product map induces isomorphisms
\begin{equation}
    \label{eq: Hecke transverse}
    \text{(i) } \bT_{\Gamma_0(N)}^{\varep} \isoto \bT_{\Gamma_0(\ell_0)}^{\varep} \times_{\bT_{\Gamma_0(\ell_0)}^{\circ,\varep}} \bT_{\Gamma_0(N)}^{\circ,\varep}
    \qquad \text{and} \qquad 
    \text{(ii) } 
    \bar\bT_{\Gamma_0(N)}^{\varep} \isoto \bar\bT_{\Gamma_0(\ell_0)}^{\varep} \times_{\bar\bT_{\Gamma_0(\ell_0)}^{\circ,\varep}} \bar\bT_{\Gamma_0(N)}^{\circ,\varep},
\end{equation}
(where $\bar A = A/pA$). The left isomorphism induces the right isomorphism because $\bT_{\Gamma_0(\ell_0)}^{\circ,\varep}$ is $p$-torsion free.

Because $\rk_{\Z_p}\bT_{\Gamma_0(N)}^{\circ,\varep}=2$ and  
$\rk_{\Z_p} \bT_{\Gamma_0(\ell_0)}^{\circ,\varep} = 1$ (\S\ref{subsec: conventions}), and because there is only one $\Z_p$-valued normalized Eisenstein series of these levels with all Atkin--Lehner eigenvalues are $-1$, we deduce that $\rk_{\Z_p} \bT_{\Gamma_0(N)}^\varep = 3$ and $\rk_{\Z_p} \bT_{\Gamma_0(\ell_0)}^{\varep} = 2$. In particular, both $\bar\bT_{\Gamma_0(\ell_0)}^{\varep}$ and $\bar\bT_{\Gamma_0(N)}^{\circ,\varep}$ are isomorphic to $\F_p[\epsilon_1]$; thus the surjections $\bar\bT_{\Gamma_0(N)}^{\varep} \to \bar\bT_{\Gamma_0(\ell_0)}^{\varep}$ and $\bar\bT_{\Gamma_0(N)}^{\varep} \to \bar\bT_{\Gamma_0(N)}^{\circ,\varep}$ can be thought of as tangent vectors. The isomorphism 
of \eqref{eq: Hecke transverse}(ii) 
implies that these two tangent vectors span the tangent space of $\bar\bT_{\Gamma_0(N)}^\varep$. As in \cite[\S5]{WWE3}, the map $R \rsurj \bar\bT_{\Gamma_0(\ell_0)}^{\varep} \simeq \F_p[\epsilon_1]$ is a reducible tangent vector, 
in the sense discussed in \S\ref{subsec: set up R}. Since the reducible subspace of the tangent space of $\bT_{\Gamma_0(N)}^{\varep}$ is $1$-dimensional by \cite[Prop.\ 5.1.8]{part1}, this implies that the tangent vector $R \rsurj \bar\bT_{\Gamma_0(N)}^{\circ,\varep} \simeq \F_p[\epsilon_1]$ is irreducible. By Lemma \ref{lem: natural iso}, we have $\bar\bT_{\Gamma_0(N)}^{\circ,\varep} \cong \bar\bT_{\Gamma_0(N)}^{\circ}$, so this completes the proof of Proposition \ref{prop: cusp Gamma0(N) irreducible}.

\section{Structure of the Hecke algebra $\bT_{\Gamma_1(N)}^{\circ}$}
\label{sec:R=T}

We conclude this paper with an argument justifying the expectation that the Hecke algebra $\bT_{\Gamma_1(N)}^{\circ}$ has a quotient isomorphic to the ring $S$ as in Theorem \ref{thm: main}. This argument uses results on the structure of the Hecke algebras of level $\Gamma_1(n)$ for $n$ squarefree obtained in \cite{HPWE} as well as some instances of $R = \bT$-style conjectures also formulated in \cite{HPWE}. We discuss these inputs in \S \ref{subsec: R=T conj} and draw implications for the structure of $\bT_{\Gamma_1(N)}^{\circ}$ in \S \ref{subsec: cuspidal quotient}. 

\subsection {Hecke algebras as $\Lambda$-module and $R=\bT$ conjectures at level $\Gamma_1(n)$}
\label{subsec: R=T conj}

Hecke algebras of level $\Gamma_1(n)$ can be viewed naturally as algebras over group rings via the action of diamond operators.  For non-Eisenstein congruence classes, the group-ring module structure has been widely studied. For example, the Taylor--Wiles method relies on certain freeness results. By contrast, for Eisenstein congruence classes, similar results often hold for Hecke algebras acting faithfully on spaces of modular forms, but in weight $2$ they sometimes fail for cusp forms (see Lemma \ref{lem: freeness}(iii)). 

Recall that, for $j=0,1$ and the algebras $\bT_{\Gamma_j(n)}$ and their cuspidal quotients $\bT_{\Gamma_j(n)}^{\circ}$ are obtained as localizations corresponding to the mod $p$-system of Hecke eigenvalues defined by \eqref{eq: eigensystem}.
Also, recall that  $\Lambda=\Z_p[\Delta]$, where $\Delta$ is the $p$-Sylow subgroup of $(\Z/\ell_0\Z)^{\times}$, or equivalently, of $(\Z/N\Z)^{\times}$, under our running assumptions.
For $n \in \{\ell_0, 
\ell_1\}$, restricting the Hecke action from cusp forms of level $\Gamma_1(n)$ to $\Gamma_0(n)$ gives canonical surjections
\begin{equation}
\label{eq: augmentation quotient}
\bT_{\Gamma_1(n)} \otimes_{\Lambda} \Z_p \twoheadrightarrow \bT_{\Gamma_0(n)}, 
\qquad
\bT_{\Gamma_1(n)}^{\circ} \otimes_{\Lambda} \Z_p \twoheadrightarrow \bT_{\Gamma_0(n)}^{\circ}.
\end{equation}

Arguments of \cite[\S3]{HPWE}, based on the Riemann--Hurwitz formula, show the following, which includes the ``largeness'' claim regarding $\bT_{\Gamma_1(N)}^\circ$ that is the foundation of the argument in \S\ref{subsec: intro tame family}. 

\begin{lem}[{\cite[\S3.2]{HPWE}}] 
\label{lem: freeness}  
Under the running assumptions (\S \ref{subsec: conventions}), we have 
\begin{enumerate}[label=(\roman*)]
    \item\label{part: freeness lemma rank} The Hecke algebras $\bT_{\Gamma_1(N)}$, $\bT_{\Gamma_1(\ell_0)}$, and $\bT_{\Gamma_1(N)}^{\circ}$ are free $\Lambda$-modules of ranks 
    \[ 
        \rk_{\Lambda}\bT_{\Gamma_1(\ell_0)}=\rk _{\Lambda}\bT_{\Gamma_1(N)}^{\circ}=2, \qquad \rk_{\Lambda}\bT_{\Gamma_1(N)}=4.
    \]
    \item\label{part: freeness lemma isos}  For $n\in \{\ell_0, N\}$, the maps in \eqref{eq: augmentation quotient} are isomorphisms. 

    \item\label{part: freeness lemma specialization} There is an isomorphism $\bT_{\Gamma_1(\ell_0)}^\circ \cong \bT_{\Gamma_0(\ell_0)}^\circ \cong\Lambda/z\Lambda \cong \Z_p$. In particular, $\bT_{\Gamma_1(\ell_0)}^\circ$ is not $\Lambda$-free. 
\end{enumerate}    
\end{lem}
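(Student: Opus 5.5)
The plan is to deduce Lemma \ref{lem: freeness} from a genus computation for the modular curves, as in \cite[\S3]{HPWE}. Write $\pi: X_1(n) \to X_H(n)$ for the cover with group $\Delta$, where $X_H(n)$ is the intermediate curve with $\Gamma_0(n) \subset \Gamma_H(n)$ of index prime to $p$. The $\Delta$-action is tamely ramified, since $|\Delta|$ is a power of $p$ and $p \nmid n$. I will work with the Eisenstein-localized modules of weight-two forms with $\Z_p$-integral $q$-expansions, or equivalently with the localized $H^0$ of $\Omega^1$ (with log poles at cusps for $M$).

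The first step is freeness. A $\Z_p[\Delta]$-lattice is free exactly when its reduction mod $p$ is a free $\F_p[\Delta]$-module. I will verify this on the mod $p$ fibre using the equivariant Riemann--Hurwitz (Nakajima/Chevalley--Weil) formula for tame $p$-group covers in characteristic $p$. This formula says $H^0$ of a suitable twist of differentials is $\F_p[\Delta]$-free once the ramification divisor is accounted for. The ramification of $\pi$ occurs only at cusps and (for $\ell_0$) possibly elliptic points. Since $p\ge5$, elliptic points are unramified in the $p$-part.

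Freeness should then hold for the full space $M_{\Gamma_1(n)}$ with log poles, which after localization gives $\bT_{\Gamma_1(n)}$ free via the duality $(f,t)\mapsto a_1(tf)$. For the cuspidal part, the cusps ramified under $\Delta$ obstruct freeness. The Eisenstein localization must be tracked: at level $N$ the relevant cusps combine so that the cuspidal localized module remains free, while at level $\ell_0$ it does not. This is the step I expect to be the main obstacle. It requires knowing exactly which cusps survive the localization at \eqref{eq: eigensystem}, namely the $U_{\ell}=1$ and $\dia{d}=1$ conditions. I would handle it by computing the boundary map $M \to \Z_p[\text{cusps}]$, localized, as a $\Lambda$-module. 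At level $\ell_0$ its image is $\Lambda\oplus\Z_p$-like, producing the non-free cokernel; at level $N$ the extra cusps from $\ell_1$ make the cuspidal kernel free.

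The second step is the isomorphisms in \eqref{eq: augmentation quotient}. Given freeness, the $\Delta$-coinvariants of the forms (dually, the invariants) are identified with level $\Gamma_0(n)$ forms, again by tameness. Passing through the perfect dualities shows that the surjections $\bT_{\Gamma_1(n)}\otimes_\Lambda\Z_p\to\bT_{\Gamma_0(n)}$ are injective by a rank count.

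The ranks then follow by specializing at $z=0$. We have $\rk_\Lambda \bT_{\Gamma_1(\ell_0)}=\rk_{\Z_p}\bT_{\Gamma_0(\ell_0)}=2$, coming from one cusp form and one Eisenstein series. We also have $\rk_\Lambda\bT^\circ_{\Gamma_1(N)}=2$ and $\rk_\Lambda\bT_{\Gamma_1(N)}=4$ by Remark \ref{rem: Hecke w vs U}.

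For the third part, the Eisenstein part of $\bT_{\Gamma_1(\ell_0)}$ is a $\Lambda$-free quotient of rank $1$, given by the Eisenstein family with varying character. The cuspidal quotient is therefore $\Lambda$-torsion of $\Z_p$-rank $1$. Since its specialization is $\bT^\circ_{\Gamma_0(\ell_0)}\cong\Z_p$, it equals $\Lambda/z\Lambda$. In particular it is not $\Lambda$-free.
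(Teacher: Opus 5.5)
The paper does not reprove this input. It reads the $\Lambda$-freeness and parts (ii) and (iii) directly off \cite[Thm.\ 3.2.2, Cor.\ 3.2.3]{HPWE}, and deduces only the ranks from Remark \ref{rem: Hecke w vs U}, exactly as in your rank paragraph. Your reproof of the HPWE input has genuine errors.

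\textbf{The geometry is misdescribed.} A $p$-group acting with nontrivial inertia in characteristic $p$ is \emph{wildly} ramified. So ``tame since $|\Delta|$ is a power of $p$'' is backwards, and if $\Delta$ really ramified at cusps, the tame Nakajima/Chevalley--Weil results you invoke would not apply. In fact, for squarefree $n$ the width of a cusp $a/c$ is $n/\gcd(c,n)$ for $\Gamma_1(n)$ and $n/\gcd(c^2,n)$ for $\Gamma_0(n)$. These coincide, so no cusp ramifies. Since the elliptic inertia has order $2$ or $3$, the $\Delta$-cover $Y\to X=X_0(n)$ is \'etale. That is what makes forms with log poles $\Lambda$-free, because $\pi_*\mathcal{O}_Y$ is locally free over $\mathcal{O}_X[\Delta]$ and $H^1(Y,\Omega^1_Y(\mathrm{cusps}))=0$.

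It also shows your diagnosis of the cuspidal obstruction is wrong. Since $\Delta$ acts freely on the geometric cusps, $\Z_p[\mathrm{cusps}(Y)]\cong\Lambda^{c}$. The image of $M$ in it is the degree-zero part $\Lambda^{c-1}\oplus I_\Delta$, not ``$\Lambda\oplus\Z_p$'', which has the wrong rank. The cokernel is $H^1(Y,\Omega^1_Y)\cong\Z_p$, a trivial $\Lambda$-module, so $S\cong\Z_p\oplus\Lambda^{g-1}$ before Hecke localization. Hecke operators act on this $\Z_p$ by their degrees ($T_q\mapsto q+1$, $U_\ell\mapsto\ell$). Localizing at $U_{\ell_1}=1\not\equiv\ell_1$ therefore kills it at level $N$, and $\bT^\circ_{\Gamma_1(N)}$ comes out free. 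At level $\ell_0$ it survives because $\ell_0\equiv 1\pmod p$. This degree-eigensystem mechanism is the missing idea in the step you flagged as the main obstacle.

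\textbf{Your argument for (iii) fails as written.} The Eisenstein quotient of $\bT_{\Gamma_1(\ell_0)}$ is not $\Lambda$-free of rank $1$. For each character $\chi$ of $\Delta$, both $E_2^{1,\chi}$ (with $U_{\ell_0}=1$; at $\chi=1$ read $E_2(z)-\ell_0E_2(\ell_0z)$) and $E_2^{\chi,1}$ (with $U_{\ell_0}=\ell_0\equiv1$) lie in this congruence class. The only exception is that the second family has no holomorphic member at $\chi=1$, since that member would realize the degree eigensystem. So the Eisenstein quotient has $\Z_p$-rank $2|\Delta|-1$, which matches $\Lambda\oplus I_\Delta$ above. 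Under your premise, the cuspidal quotient of the rank-$2$ free algebra would have $\Z_p$-rank $|\Delta|$, so ``therefore of $\Z_p$-rank $1$'' is a non sequitur. It is the correct count that forces rank $1$. After that, your final step works: a $p$-group acts trivially on a rank-one $\Z_p$-lattice, and the surjection onto $\bT^\circ_{\Gamma_0(\ell_0)}\cong\Z_p$ is an isomorphism. Relatedly, (ii) for $\bT^\circ_{\Gamma_1(\ell_0)}$ cannot come from ``freeness plus a rank count'', since that algebra is not free. It has to be deduced from (iii), not used to prove it.
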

\begin{proof}
Claim \ref{part: freeness lemma isos}, claim \ref{part: freeness lemma specialization}, and the $\Lambda$-freeness claims in \ref{part: freeness lemma rank} may be read directly off from \cite[Thm.\ 3.2.2 and Cor.\ 3.2.3]{HPWE}. The cuspidal cases are the most subtle---see below for a discussion on the failure of $\Lambda$-freeness of $\bT_{\Gamma_1(\ell_0)}^\circ$. The $\Lambda$-ranks claimed in \ref{part: freeness lemma rank} follow from the ranks given in Remark \ref{rem: Hecke w vs U}. 
\end{proof}

More generally, as explained in \cite[\S1]{HPWE}, the failure of $\Lambda$-freeness of the cuspidal Hecke algebra of level $\Gamma_1(n)$ is concentrated at the eigensystem characterized by 
\begin{equation} 
\label{eq: degree}
T_m \mapsto \sigma(m) \text{ for } (m,n) = 1, \quad  \dia{d} \mapsto  1, \quad U_\ell \mapsto \ell \text{ for primes }\ell \mid n. 
\end{equation}
Under the assumptions of \S\ref{subsec: conventions}, this coincides with the mod-$p$ eigensystem defining $\bT_{\Gamma_1(\ell_0)}^{\circ}$. However, the obstruction is resolved when the prime $\ell_1\not\equiv 1\pmod{p}$ is added to the level and we localize at $U_{\ell_1} = 1$, meaning the resulting Hecke ring $\bT_{\Gamma_1(N)}^{\circ}$ is suitably ``large.'' 
The eigensystem in \eqref{eq: degree} is referred to as the \emph{degree eigensystem}, as each Hecke operator is sent to its degree as a correspondence. It formally behaves like the eigensystem corresponding to a stabilization of the weight $2$ Eisenstein series of level $\SL_2(\Z)$, if such a holomorphic modular form existed. 

This phenomenon has a Galois counterpart, which poses a technical difficulty in the formulation of a suitable deformation theory for forms of level $\Gamma_1(n)$; this formulation is one of the main contributions of \cite{HPWE}.
Indeed, in \cite{HPWE}, the authors define certain deformation rings that encode properties of the  $p$-adic Galois representations corresponding to modular forms of level $\Gamma_j(n)$ for $p \nmid n$ squarefree and $j=0,1$. More precisely, these deformation rings parameterize pairs $(D, (u_{\ell})_{\ell \mid n})$, where $D$ is a pseudorepresentation with suitable local conditions and $u_{\ell}$ is a scalar. When the pseudorepresentation comes from a representation, this datum corresponds to the choice of $\Fr_{\ell}$-eigenvalue for a quotient that is unramified at $\ell$. Importantly, for a representation corresponding to a form that is old at $\ell$, such a choice is not unique in general. 
We denote by $R_{\Gamma_j(n)}$ the deformations rings arising from this construction when we restrict to lifts of the residual choice $(\Db = \psi(\omega \oplus 1) : G_{\Q,\cS} \to \F_p, (u_{\ell}=1)_{\ell \mid n})$; this corresponds to restricting to pseudorepresentations arising from modular forms in the  congruence class defined by \eqref{eq: eigensystem}.\footnote{In the notation of \cite{HPWE}, this corresponds to the choices $\Delta=(\Z/n\Z)^{\times}$ and $\Delta=\{1\}$ for $j=0$ and $j=1$, respectively.} 

The deformation ring $R_{\Gamma_1(n)}$ is equipped with a $\Lambda$-module structure induced by the determinant of the Galois representation. Because Galois representation for forms of level $\Gamma_0(n)$  are characterized by the property that the determinant is fixed, it follows that $ R_{\Gamma_1(n)}\otimes _{\Lambda}\Z_p \cong R_{\Gamma_0(n)}$ essentially by definition. Forgetting the datum of $(u_{\ell})_{\ell \mid n}$ gives rise to a morphism $R\to R_{\Gamma_j(n)}$, which can fail to be surjective. This is the case in the settings of interest for $n \in \{\ell_0, N\}$. For example, taking $\kappa: G_{\Q, S} \to \Z_p^{\times}$ to be the $p$-adic cyclotomic character, for $n={\ell_0}$, 
the choices 
\begin{equation} 
\label{eq: eigenvalue choice}
(D = \psi(\kappa \oplus 1), u_{\ell_0}=1)\quad  \text{and} \quad 
(D = \psi(\kappa \oplus 1), u_{\ell_0}=\ell_0)
\end{equation} 
are both valid and belong to the same congruence class as $\ell_0\equiv 1 \pmod p$.

Similar to the mechanism explained in \S \ref{subsec: attach Gal reps to eigenforms}, there are natural maps $\Phi_{\Gamma_j(n)}:  R_{\Gamma_j(n)} \to \bT_{\Gamma_j(n)}$, induced by the pseudorepresentations associated to Hecke eigenforms, characterized by $\Phi_{\Gamma_j(n)}(\Tr_D(\Fr_q))=T_q$ for primes $q \nmid n$ and $\Phi_{\Gamma_j(n)}(u_\ell)=U_\ell$ for primes $\ell \mid n$. These maps are surjective by construction; indeed, $R_{\Gamma_j(N)}$ has an element mapping to every Hecke operator generating $\bT_{\Gamma_1(N)}$ (the operator $T_p$ is a special case; it may be handled exactly as in \cite[Prop.\ 4.1.1]{WWE5}). A simple observation prevents $\Phi_{\Gamma_j(n)}$ from being injective in full generality: the second option in equation \eqref{eq: eigenvalue choice} would correspond to the degree eigensystem \eqref{eq: degree}, which, as previously mentioned, is not realized in the space of modular forms.  

The main conjecture of \cite{HPWE}, specialized in the cases of interest for this paper, states the following: 
\begin{conj} 
\label{conj}  
Under the running assumptions (\S\ref{subsec: conventions}), for $j=0,1$, we have
\begin{enumerate}[label=(\roman*)]
 \item \label{part: conj i} the map $\Phi_{\Gamma_j(N)} : R_{\Gamma_j(N)} \rsurj \bT_{\Gamma_j(N)}$ is an isomorphism 
 \item \label{part: conj ii} the kernel of the map $\Phi_{\Gamma_j(\ell_0)}: R_{\Gamma_j(\ell_0)} \rsurj \bT_{\Gamma_j(\ell_0)}$  isomorphic to $\Lambda/z\Lambda \cong \Z_p$ as a $\Lambda$-module.
\end{enumerate}
\end{conj}

Note that, in particular, part \ref{part: conj ii} implies that the kernel of the map in the statement is principal, which means that the case $u_{\ell_0} = \ell_0$ of \eqref{eq: degree} accounts for the whole kernel. 

The underlying philosophy is that, while an obstruction to $R=\bT$ can occur, it is as small as possible and can be attributed entirely to the degree eigensystem. The authors of \cite{HPWE} aim to prove Conjecture \ref{conj} in follow-up work \cite{APAW}.    

\subsection{A quotient of $\bT^{\circ}_{\Gamma_1(N)}$ is isomorphic to $S$}
\label{subsec: cuspidal quotient} 

We now proceed with step 2 in the strategy outlined at the start of \S\ref{sec: Hecke algs}. Namely, we prove that Conjecture \ref{conj} implies that $\bT_{\Gamma_1(N)}$ has the desired form that is a hypothesis of the main Theorem \ref{thm: main}. We begin by producing the following diagram of $R$-algebras, which combines the $R$-to-$\bT$ comparison of \S\ref{subsec: R=T conj} with a change in level from $N$ to $\ell_0$, that is, a passage to $\ell_1$-old quotients on the Galois and Hecke side. The key to this change in level is the existence of pair of square diagrams, with a reduction modulo $z\Lambda$ map from one to the other, 
\begin{equation}
    \label{eq: level change diagrams}
    \xymatrix@R=0.4em{
    R_{\Gamma_1(N)}\ar@{->>}[r]^(0.45){\alpha_1} \ar[dd]^{\wr}_{\Phi_{\Gamma_1(N)}} & R_{\Gamma_1(\ell_0)}\ar@{->>}[dd]_{\Phi_{\Gamma_1(\ell_0)}} & & & R_{\Gamma_0(N)} \ar[dd]_\wr^{\Phi_{\Gamma_0(N)}} \ar@{->>}[r]^(0.45){\alpha_0} & R_{\Gamma_0(\ell_0)} \ar@{->>}[dd]^{\Phi_{\Gamma_0(\ell_0)}} \\
    & & \ar@{->>}[r]^{\pmod{z\Lambda}\ \ } & & & \\
    \bT_{\Gamma_1(N)}\ar@{->>}[r] & \bT_{\Gamma_1(\ell_0)} & & & \bT_{\Gamma_0(N)} \ar@{->>}[r] & \bT_{\Gamma_0(\ell_0)}
    }
\end{equation}
The existences of $\Phi_{\Gamma_j(n)}$ maps and of the reduction mod $z\Lambda$ map are discussed in \S\ref{subsec: R=T conj}. The rightward map within each square is what we must justify: these exist due to the fact that each $R_{\Gamma_j(n)}$ is a quotient of $R[u_{\ell_0}]$. This is non-trivial because $R_{\Gamma_j(n)}$ is by definition a quotient of $R[u_\ell : \ell \mid n]$, and thus our claim is that $u_{\ell_1} \in R_{\Gamma_j(N)}$ lies in the image of  $R[u_{\ell_0}] \to R_{\Gamma_j(N)}$. Indeed, this follows from virtually the same proof as that of Lemma \ref{lem: natural iso}: $u_{\ell_1}$ is a root of a polynomial in $R[X]$ that reduces mod $\m_R$ to $X^2 - (\ell_1+1)X + \ell_1$, which has distinct roots because $\ell_1 \not\equiv 1 \pmod{p}$ (\S\ref{subsec: conventions}). 

\begin{thm}
    \label{thm: conjs imply result}
    In addition to the running assumptions (\S\ref{subsec: conventions}) and Conjecture \ref{conj}, assume the following:  
    \begin{enumerate}[label=(\roman*)]
        \item\label{part: thm 2} There is an isomorphism $\bT_{\Gamma_1(\ell_0)} \otimes_{\bT_{\Gamma_1(N)}} \bT^\circ_{\Gamma_1(N)} \cong \bT^\circ_{\Gamma_1(\ell_0)}.$
        \item\label{part: thm 3} 
        The element $(u_{\ell_0}-1) \in R_{\Gamma_0(\ell_0)}$ generates the kernel of $\Phi_{\Gamma_0(\ell_0)} : R_{\Gamma_0(\ell_0)} \rsurj \bT_{\Gamma_0(\ell_0)}$.
    \end{enumerate}
    Then there is a surjective map $\pi' :\bT^\circ_{\Gamma_1(N)} \rsurj S$ such that $\pi'(J^\red \cdot \bT^\circ_{\Gamma_1(N)}) = yS$ and such that $\phi_S \circ \pi'$ factors through $\bT^\circ_{\Gamma_1(N)} \rsurj \bar\bT^\circ_{\Gamma_0(N)}$.
\end{thm}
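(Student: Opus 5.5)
By the introduction (\S\ref{subsec: intro tame family}) the goal reduces to showing we are in case \ref{part: cases ii}, i.e. $\dim_{\F_p}\bar\bT^{\circ,\red}_{\Gamma_1(N)} = 2$, or equivalently $\lambda_0 \equiv \mu_2 z^2 \pmod{(p,z^3)}$ with $\mu_2 \neq 0$. First I will set up the presentation \eqref{eq: T1Ncirc form}. By Lemma \ref{lem: freeness}\ref{part: freeness lemma rank}, $\bT^\circ_{\Gamma_1(N)}$ is $\Lambda$-free of rank $2$. By Proposition \ref{prop: cusp Gamma0(N) irreducible}, $R \to \bar\bT^\circ_{\Gamma_0(N)}$ is an irreducible tangent vector, so the image of $b_1 c$ (which generates $\bar J^\red$ by Proposition \ref{prop:B and C}) is a uniformizer $\ep$ of $\bar\bT^\circ_{\Gamma_0(N)} = \bT^\circ_{\Gamma_1(N)}/(p,z)$. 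Nakayama then lets us take $y$ lifting $b_1c$, which gives \eqref{eq: T1Ncirc form} with $\bT^\circ_{\Gamma_1(N)}/J^\red \cong \Lambda/(\lambda_0)$ after reducing mod $p$. Quotienting by $(p,yz,z^3)$ yields \eqref{eq: T1Ncirc surj}, with $y \mapsto y$ and $J^\red \mapsto yS$. The map $\phi_S\circ\pi'$ kills $z$, so it factors through $\bT^\circ_{\Gamma_1(N)}/(p,z)=\bar\bT^\circ_{\Gamma_0(N)}$ (Lemma \ref{lem: freeness}\ref{part: freeness lemma isos}). It remains to compute $\dim_{\F_p}\Lambda/(p,\lambda_0)=\dim_{\F_p}\bar\bT^{\circ,\red}_{\Gamma_1(N)}$ and show it equals $2$.

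To compute this I will use Conjecture \ref{conj} and diagram \eqref{eq: level change diagrams}. Since $R_{\Gamma_1(N)} \cong \bT_{\Gamma_1(N)}$, the reducible quotient $\bT_{\Gamma_1(N)}/J^\red$ is identified with $R^\red_{\Gamma_1(N)}$. This ring should be describable explicitly as the ring parameterizing pairs of characters $\kappa\langle\cdot\rangle^{\pm}$-type reducible pseudodeformations $\psi(\chi_1\oplus\chi_2)$ with $\chi_1\chi_2 = \kappa\cdot\det$ valued in $\Lambda$, together with the choices $u_{\ell_0}, u_{\ell_1}$, finite-flat at $p$. The cuspidal reducible quotient $\bT^\circ_{\Gamma_1(N)}/J^\red$ is then obtained by cutting out the Eisenstein part. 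The role of hypothesis \ref{part: thm 2} is to control the gluing: $\bT_{\Gamma_1(N)}$ is the fiber product of $\bT_{\Gamma_1(\ell_0)}$ and $\bT^\circ_{\Gamma_1(N)}$ over $\bT^\circ_{\Gamma_1(\ell_0)}\cong \Z_p$, which is the $\Gamma_1$-analogue of \eqref{eq: Hecke transverse}(i). The role of hypothesis \ref{part: thm 3} and Conjecture \ref{conj}\ref{part: conj ii} is to identify $\bT_{\Gamma_1(\ell_0)}$ as $R_{\Gamma_1(\ell_0)}/(\text{a }\Lambda/z\text{ piece})$, with the degree eigensystem $u_{\ell_0}=\ell_0$ accounting for the kernel. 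Combining these, I expect to express $\lambda_0$, up to a unit, as a product of two elements of $\m_\Lambda$, each congruent to a unit times $z$ mod $(p,z^2)$. One factor comes from $u_{\ell_0}-1$ on the reducible locus, where it should behave like $\langle\ell_0\rangle$-type diamond variation, i.e. like $z$. The other comes from the constant-term or Eisenstein ideal, where it behaves like $\log_{\ell_0}$ of the diamond character. Their product would then give $\lambda_0 \equiv \mu_2 z^2$ with $\mu_2\neq 0$.

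The main obstacle is exactly this computation of $\Lambda/(p,\lambda_0)$: showing both that $\mu_1=0$ and that $\mu_2\ne 0$. The vanishing $\mu_1=0$ I would get from irreducibility: mod $z$, $J^\red$ must map onto the maximal ideal $(\ep)$ of $\F_p[\ep_1]$, so $y^2 \equiv 0$ there, hence $\lambda_0\in(p,z)$. A finer analysis of $\bT^{\red}_{\Gamma_1(N)}$ as a $\Lambda$-module shows $\lambda_0\in (p,z^2)$. For $\mu_2 \ne 0$ I would first try a length count. The ranks $\rk_\Lambda \bT_{\Gamma_1(N)}=4$ and $\rk_\Lambda\bT_{\Gamma_1(\ell_0)}=2$, together with the fiber product description, bound the length of $R^\red_{\Gamma_1(N)}/p$ in the cuspidal part by $2$. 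Concretely, the reducible locus is a union of two $\Lambda$-lines (for $u_{\ell_0}\in\{1,\ell_0\}$-type branches) meeting transversally. Subtracting the Eisenstein line then leaves $\Lambda/(p,z^2)$. If this bookkeeping fails, the fallback is to compare directly with the computed examples, where $y^2-\mu z^2$ appears, to locate the correct factorization.
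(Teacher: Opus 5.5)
\textbf{There is a genuine gap: the dimension count that carries the theorem is not proved.} Your first paragraph is sound and matches the paper's setup. There $\{1,y\}$ is a $\Lambda$-basis by Nakayama and Proposition \ref{prop: cusp Gamma0(N) irreducible}, and everything reduces to showing $\dim_{\F_p}\Lambda/(p,\lambda_0)=2$. But that dimension count is the whole content of the theorem, and you do not prove it. The rest of the proposal consists of expectations (``should be describable'', ``I expect'', ``would then give'') and a fallback to the computed examples, which is not an argument.

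Your one concrete step is also wrong. Irreducibility of the tangent vector only gives $y^2\equiv 0$ modulo $(p,z)$, i.e.\ $\lambda_0\in(p,z)$. That is $\mu_0=0$, which is automatic from $\lambda_0\in\m_\Lambda$, and it says nothing about $\mu_1$. Nor can a length count based on ranks give $\mu_2\neq 0$. The ring $\Lambda[y]/(y^2+py+z^3)$ is compatible with all the freeness, rank and irreducibility facts you invoke, yet lands in case \ref{part: cases iii}. You say hypothesis \ref{part: thm 2} ``controls the gluing'', but you never extract an actual bound from it.

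The missing idea is the element $g$, which the paper constructs as follows.
\begin{itemize}
\item It uses three inputs: $J_1R_{\Gamma_1(N)}=\ker\alpha_1$, hypothesis \ref{part: thm 3}, and the fact that Conjecture \ref{conj}\ref{part: conj ii} makes $\ker\Phi_{\Gamma_1(\ell_0)}\to\ker\Phi_{\Gamma_0(\ell_0)}$ an isomorphism.
\item With these, it lifts $u_{\ell_0}-1\in R_{\Gamma_0(N)}$ to $g\in\ker(\bT_{\Gamma_1(N)}\rsurj\bT_{\Gamma_1(\ell_0)})$. The lift is chosen so that $\{y,g\}$ is a $\Lambda$-basis of this kernel and $g$ generates $\ker\Phi_{\Gamma_1(\ell_0)}\cong\Lambda/z\Lambda$.
\item Since $z\Lambda$ is the annihilator in $\Lambda$ of that degree-eigensystem kernel, one gets $y\bT_{\Gamma_1(N)}=\Lambda y\oplus z\Lambda g$.
\end{itemize}

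The two bounds then follow.
\begin{itemize}
\item \emph{Upper bound.} Hypothesis \ref{part: thm 2} gives $\bar\bT^\circ_{\Gamma_1(N)}/(y,g)\cong\F_p$. So $(y,g)$ and $(y,z)$ both generate the maximal ideal. Then $zg\in y\bar\bT^\circ_{\Gamma_1(N)}$ forces $z^2=0$ in $\bar\bT^{\circ,\red}_{\Gamma_1(N)}$.
\item \emph{Lower bound.} $g$ dies in $\bT^\circ_{\Gamma_0(N)}$, because $U_{\ell_0}$ acts by $1$ on both cusp forms there. So $g\in z\bar\bT^\circ_{\Gamma_1(N)}$, and $y\bar\bT^\circ_{\Gamma_1(N)}$ is the $\Lambda/p\Lambda$-span of $y$ and an element of $z^2\bar\bT^\circ_{\Gamma_1(N)}$. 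This gives $\mu_1=0$.
\end{itemize}

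Your heuristic that $\lambda_0$ is a product of two $z$-like factors, one coming from $u_{\ell_0}-1$, points in the right direction. In the paper, however, the other factor is the annihilator $z$ of the degree-eigensystem kernel, not a constant-term contribution. These two bound arguments are what turn the heuristic into a proof.
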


By Theorem \ref{thm: main}, the conclusion of this theorem implies the arithmetic conclusion of \eqref{eq: a1 at ell1 CFT}. 

\begin{rem}
We provide some comments on the assumptions. Assumption \ref{part: thm 2} is a variant at level $\Gamma_1(N)$ of what we prove at level $\Gamma_0(N)$ in \S\ref{sec: Hecke algs} to establish the isomorphisms of \eqref{eq: Hecke transverse}. Assumption \ref{part: thm 3} is rather straightforward to prove but involves details of the theory of $R_{\Gamma_j(n)}$ set up in \cite[\S4]{HPWE}, so we defer it to future work \cite{APAW}. Here, we point out that $u_{\ell_0}-1$ vanishes in $\bT_{\Gamma_0(\ell_0)}$ but does not vanish along the map $R_{\Gamma_0(\ell_0)} \to\Z_p$ corresponding to the right pair in \eqref{eq: eigenvalue choice}.     
\end{rem}

The central idea of the proof of Theorem \ref{thm: conjs imply result} is that the maximal quotient $\bar  \bT_{\Gamma_1(N)}^{\circ,\red}$ of $\bar \bT_{\Gamma_1(N)}^\circ$ that supports a reducible pseudorepresentation has $\F_p$-dimension $2$. The role of this dimension and the fact that it implies the existence of the desired map $\pi'$ has been explained in \S\ref{subsec: intro tame family}.

    This is an interesting phenomenon because even though the Galois pseudorepresentations supported by $\bT_{\Gamma_1(N)}^\circ$ are irreducible and merely residually reducible, there is still (modulo $p$) a non-trivial deformation of reducible Galois representations realized, which more apparently comes from tame families of Eisenstein series. This is comparable to the phenomenon called \textit{extra reducibility} in \cite{wake2020eisenstein}, which occurs in $\bT_{\Gamma_0(\ell_0)}$.

\begin{proof}
This proof focuses on ideals of $R_{\Gamma_1(N)}$ and its quotients, so we set up some notation for these. We recall the notion of reducibility ideal and unramified-at-$\ell_1$ ideal of $R$ set up to state Proposition \ref{prop:B and C}, and denote these by $J^\red$ and $J_1$, respectively. Because every deformation ring and Hecke algebra in this proof carries a pseudorepresentation of $G_{\Q,\cS}$ and thereby has the structure of an $R$-algebra, $J^\red$ and $J_1$ can be extended to any of the rings in this proof.

The key conclusion of Proposition \ref{prop:B and C} is that $J_1 \subset R$ is principal and that $J_1 \bar R = J^\red \bar R =: \bar J^\red$. Thus, the quotient map $\bar  \bT_{\Gamma_1(N)}^\circ \rsurj \bar  \bT_{\Gamma_1(N)}^{\circ,\red}$ has a principal kernel. It will be convenient for the proof to choose a generator $y \in J_1$ (which projects to $\bar R$ to a generator of $\bar J^\red$), so this quotient will also be written as $\bT_{\Gamma_1(N)}^\circ/(y,p)\bT_{\Gamma_1(N)}^\circ$. Because being unramified at $\ell_1$ cuts out $R_{\Gamma_1(\ell_0)}$ as a quotient of $R_{\Gamma_1(N)}$, there is an equality of ideals $J_1R_{\Gamma_1(N)} = \ker \alpha_1$, where $\alpha_1$ is as in \eqref{eq: level change diagrams}.  
For the same reason, $J_1R_{\Gamma_0(N)}=\ker \alpha_0$.

We claim that the image of $y$ in $\bT_{\Gamma_1(N)}^\circ$ makes $\{1,y\}$ a $\Lambda$-basis for it. This follows from applying Nakayama's lemma together with Proposition \ref{prop: cusp Gamma0(N) irreducible}, which says that the reducibility ideal of $\bar J^\red \subset \bar R$ surjects onto the unique non-trivial ideal of $\bT_{\Gamma_1(N)}^\circ \otimes_\Lambda \Lambda/(z,p) \cong \bar\bT_{\Gamma_0(N)}^\circ \simeq \F_p[\ep_1]$. In particular, this justifies the presentation of $\bT_{\Gamma_1(N)}^\circ$ in \S\ref{subsec: intro tame family} as \eqref{eq: T1Ncirc form}.

Our goal is to prove that $\bar \bT_{\Gamma_1(N)}^{\circ,\red} \cong \bT_{\Gamma_1(N)}^\circ/(J^\red,p)\bT_{\Gamma_1(N)}^\circ = \bT_{\Gamma_1(N)}^\circ/(y,p)\bT_{\Gamma_1(N)}^\circ$ has $\F_p$-dimension $2$. To do this, we first study the ideal $y \bT_{\Gamma_1(N)} \subset \bT_{\Gamma_1(N)}$, and then characterize its projection to $\bT_{\Gamma_1(N)}^\circ/p\bT_{\Gamma_1(N)}^\circ$.

Identifying $R_{\Gamma_1(N)} \cong \bT_{\Gamma_1(N)}$ by Conjecture \ref{conj}\ref{part: conj i}, the kernel of the surjection $\Phi_{\Gamma_1(\ell_0)} \circ \alpha_1 :\bT_{\Gamma_1(N)} \rsurj \bT_{\Gamma_1(\ell_0)}$ is $\Lambda$-free of rank $2$ by Lemma \ref{lem: freeness}, which states that the source and target of the surjection are $\Lambda$-free of ranks $4$ and $2$, respectively. Likewise, this kernel is compatible with reduction modulo $z\Lambda$, in the sense that the natural map
\begin{equation}
\label{eq: kernel surj}
    \ker (\Phi_{\Gamma_1(\ell_0)} \circ \alpha_1) \otimes_\Lambda \Z_p 
    \to \ker \left(\Phi_{\Gamma_0(\ell_0)} \circ \alpha_0 : \bT_{\Gamma_0(N)} \rsurj \bT_{\Gamma_0(\ell_0)}\right)    
\end{equation}
is an isomorphism of free $\Z_p$-modules of rank $2$. 

Next, we establish the existence of an element $g \in \ker (\Phi_{\Gamma_1(\ell_0)} \circ \alpha_1) \subset \bT_{\Gamma_1(N)}$ satisfying the following properties. 
\begin{enumerate}[label=(\alph*)]
    \item\label{part: def g generates kernel} The image of $g$ under $\alpha_1 : R_{\Gamma_1(N)} \cong \bT_{\Gamma_1(N)} \rsurj R_{\Gamma_1(\ell_0)}$ generates $\ker \Phi_{\Gamma_1(\ell_0)}$. 
    \item\label{part: def g vanishes} The image of $g$ under $\bT_{\Gamma_1(N)} \rsurj \bT_{\Gamma_0(N)}^\circ$ vanishes.
    \item\label{part: def g g'} The image $g'$ of $g$ under $\bT_{\Gamma_1(N)} \rsurj R_{\Gamma_0(N)} \cong \bT_{\Gamma_0(N)}$ makes $\{y,g'\} \subset R_{\Gamma_0(N)}$ a $\Z_p$-basis for $\ker (\Phi_{\Gamma_0(\ell_0)} \circ \alpha_0)$.
    \item\label{part: def g lambda basis} The subset $\{y,g\} \subset \ker (\Phi_{\Gamma_1(\ell_0)} \circ \alpha_1)$ a $\Lambda$-basis.
\end{enumerate}
Indeed, consider the surjection $\beta : \ker (\Phi_{\Gamma_0(\ell_0)} \circ \alpha_0) \rsurj \ker \Phi_{\Gamma_0(\ell_0)}$ given by restriction of $\alpha_0$; the source has $\Z_p$-rank $2$ and the target has $\Z_p$-rank $1$. Now $\ker \beta = J_1 R_{\Gamma_0(N)}$ is $\Z_p$-free of rank $1$ and spanned by the image of $y$ in $R_{\Gamma_0(N)}$. Let 
\[
g' = (u_{\ell_0}-1) \in \ker (\Phi_{\Gamma_0(\ell_0)} \circ \alpha_0) \subset R_{\Gamma_0(N)},
\] 
which is a lift over $\beta$ of the generator (by assumption \ref{part: thm 3}) $u_{\ell_0}-1$ of $\ker \Phi_{\Gamma_0(\ell_0)}$. Thus, $\{y,g'\}$ is a $\Z_p$-basis of $\ker (\Phi_{\Gamma_0(\ell_0)} \circ \alpha_0)$. Now let $g$ denote a lift of $g'$ over the reduction mod $z\Lambda$ map $\ker (\Phi_{\Gamma_1(\ell_0)} \circ \alpha_1) \rsurj \ker (\Phi_{\Gamma_0(\ell_0)} \circ \alpha_0)$ (see \eqref{eq: kernel surj}). 

We claim that our choice of $g$ satisfies \ref{part: def g generates kernel}--\ref{part: def g lambda basis}. Property \ref{part: def g generates kernel} follows from the fact that the projection of $g$ along $\bT_{\Gamma_1(N)} \rsurj R_{\Gamma_0(\ell_0)}$ generates $\ker \Phi_{\Gamma_0(\ell_0)}$ by choice of $g'$, along with the consequence of Conjecture \ref{conj}\ref{part: conj ii} that the surjection $R_{\Gamma_1(\ell_0)} \rsurj R_{\Gamma_0(\ell_0)}$ induces an isomorphism $\ker \Phi_{\Gamma_1(\ell_0)} \isoto \ker \Phi_{\Gamma_0(\ell_0)}$. Property \ref{part: def g vanishes} follows from the facts that $g$ maps to $g'=u_{\ell_0} -1 \in R_{\Gamma_0(N)}$, that the surjection $\bT_{\Gamma_1(N)} \rsurj \bT_{\Gamma_0(N)}^\circ$ factors through $\bT_{\Gamma_1(N)} \rsurj R_{\Gamma_0(N)}$, and that $U_{\ell_0}$ acts as $1$ on the two $\Z_p$-valued cusp forms whose eigensystems are parameterized by $\bT_{\Gamma_0(N)}^\circ$. Property \ref{part: def g g'} was already mentioned above, after choosing $g'$. Property \ref{part: def g lambda basis} follows from \ref{part: def g g'} by Nakayama's lemma. 

Applying Conjecture \ref{conj}\ref{part: conj ii} and \ref{part: def g generates kernel}, the annihilator ideal in $\Lambda$ of $\ker \Phi_{\Gamma_1(\ell_0)} = g R_{\Gamma_1(\ell_0)}$ is $z\Lambda$ (thus, for $x \in \Lambda$, $xg \in y\bT_{\Gamma_1(N)}$ if and only if $x \in z\Lambda$). Consequently, applying \ref{part: def g lambda basis}, we must have a $\Lambda$-linear presentation for $y\bT_{\Gamma_1(N)}$ within the $\Lambda$-free module $(y,g)\bT_{\Gamma_1(N)} = \ker (\Phi_{\Gamma_1(\ell_0)} \circ \alpha_1)$ with $\Lambda$-basis $\{y,g\}$ as 
\[
\Lambda \cdot y \oplus \Lambda \cdot g \cong (y,g)\bT_{\Gamma_1(N)} \supset y\bT_{\Gamma_1(N)} \cong \Lambda \cdot y \oplus (z\Lambda) \cdot g. 
\] 

Now, by assumption \ref{part: thm 2} and the fact that $\bT_{\Gamma_1(N)}/(y,g)\bT_{\Gamma_1(N)} \cong \bT_{\Gamma_1(\ell_0)}$, we have (by Lemma \ref{lem: freeness}\ref{part: freeness lemma specialization}) that
\[
    \bT_{\Gamma_1(N)}^\circ/(y,g)\bT_{\Gamma_1(N)}^\circ \cong \bT_{\Gamma_1(\ell_0)}^\circ \cong \Z_p, \qquad \text{thus } \bT_{\Gamma_1(N)}^\circ/(y,g,p)\bT_{\Gamma_1(N)}^\circ \cong \F_p. 
\]
We see that $(y,g)\bar \bT_{\Gamma_1(N)}^\circ \subset \bar \bT_{\Gamma_1(N)}^\circ$ is the maximal ideal, as is $(y,z)\bar \bT_{\Gamma_1(N)}^\circ$. Combining this with the fact that $zg \in y \bar \bT_{\Gamma_1(N)}^\circ$, we deduce that $\dim_{\F_p} \bT_{\Gamma_1(N)}^\circ/(y,p)\bT_{\Gamma_1(N)}^\circ \leq 2$. 

On the other hand, the ideal $y\bar\bT_{\Gamma_1(N)}^\circ$ is the $\Lambda/p\Lambda$-span of the images of $y$ and $zg$. Now we apply property \ref{part: def g g'} of $g$, which is that it projects to $\bT_{\Gamma_1(N)}^\circ/z\bT_{\Gamma_1(N)}^\circ \cong \bT_{\Gamma_0(N)}^\circ$ as zero. Thus, the image of $g$ in $\bar\bT_{\Gamma_1(N)}^\circ$ lies in $z\bar\bT_{\Gamma_1(N)}^\circ$, from which it follows that $y\bar\bT_{\Gamma_1(N)}^\circ$ is the $\Lambda/p\Lambda$-span of two elements, one of which lies in $z^2(\Lambda/p\Lambda)$. We conclude that $\dim_{\F_p} \bar\bT_{\Gamma_1(N)}^\circ/y\bar\bT_{\Gamma_1(N)}^\circ = 2$. Then the argument in \S\ref{subsec: intro tame family} completes the proof.
\end{proof}

\subsection*{Acknowledgements}

The authors would like to thank Barry Mazur and Ken Ribet for their inspiring work on the topics discussed in this article. CH was supported by an AMS-Simons Research Enhancement Grant for Primarily Undergraduate Institution Faculty. PW was supported by a grant from the National Science Foundation DMS-2337830 and by a fellowship from Simons Foundation International [SFI-MPS-SFM-00021789, PW]. CWE was supported by a grant from the Simons Foundation MPS-TSM-00846912 and a grant from the National Science Foundation DMS-2401384. 

The authors used an AI tool (Claude Opus 5.0) to simplify the ring presentations of \S\ref{subsec: computation} from SageMath output and subsequently independently verified the presentation. This was the only use of AI tools. 

\bibliographystyle{alpha}

\bibliography{metabelian.bib}

\newcommand{\etalchar}[1]{$^{#1}$}
\def\cprime{$'$} \def\Dbar{\leavevmode\lower.6ex\hbox to 0pt{\hskip-.23ex
  \accent"16\hss}D} \def\cfac#1{\ifmmode\setbox7\hbox{$\accent"5E#1$}\else
  \setbox7\hbox{\accent"5E#1}\penalty 10000\relax\fi\raise 1\ht7
  \hbox{\lower1.15ex\hbox to 1\wd7{\hss\accent"13\hss}}\penalty 10000
  \hskip-1\wd7\penalty 10000\box7}
  \def\cftil#1{\ifmmode\setbox7\hbox{$\accent"5E#1$}\else
  \setbox7\hbox{\accent"5E#1}\penalty 10000\relax\fi\raise 1\ht7
  \hbox{\lower1.15ex\hbox to 1\wd7{\hss\accent"7E\hss}}\penalty 10000
  \hskip-1\wd7\penalty 10000\box7} \def\Dbar{\leavevmode\lower.6ex\hbox to
  0pt{\hskip-.23ex \accent"16\hss}D}
  \def\cfac#1{\ifmmode\setbox7\hbox{$\accent"5E#1$}\else
  \setbox7\hbox{\accent"5E#1}\penalty 10000\relax\fi\raise 1\ht7
  \hbox{\lower1.15ex\hbox to 1\wd7{\hss\accent"13\hss}}\penalty 10000
  \hskip-1\wd7\penalty 10000\box7}
  \def\cftil#1{\ifmmode\setbox7\hbox{$\accent"5E#1$}\else
  \setbox7\hbox{\accent"5E#1}\penalty 10000\relax\fi\raise 1\ht7
  \hbox{\lower1.15ex\hbox to 1\wd7{\hss\accent"7E\hss}}\penalty 10000
  \hskip-1\wd7\penalty 10000\box7}
\begin{thebibliography}{BHRS{\etalchar{+}}26}

\bibitem[BC09]{BC2009}
Jo{\"e}l Bella{\"{\i}}che and Ga{\"e}tan Chenevier.
\newblock Families of {G}alois representations and {S}elmer groups.
\newblock {\em Asterisque}, 324:xii+314 pp., 2009.

\bibitem[BHRS{\etalchar{+}}26]{APAW}
Amie Bray, Catherine Hsu, \'{O}scar Rivero~Salgado, Alice Pozzi, Vinayak
  Vatsal, and Carl Wang-Erickson.
\newblock {Universal modularity conjectures for elliptic modular forms:
  {M}odularity lifting}.
\newblock In preparation, 2026.

\bibitem[CE05]{CE2005}
Frank Calegari and Matthew Emerton.
\newblock On the ramification of {H}ecke algebras at {E}isenstein primes.
\newblock {\em Invent. Math.}, 160(1):97--144, 2005.

\bibitem[Che14]{chen2014}
Ga\"{e}tan Chenevier.
\newblock The $p$-adic analytic space of pseudocharacters of a profinite group,
  and pseudorepresentations over arbitrary rings.
\newblock In {\em Automorphic Forms and {G}alois Representations: {V}ol.\ {I}},
  volume 414 of {\em London Mathematical Society Lecture Note Series}, pages
  221--285. Cambridge Univ. Press, Cambridge, 2014.
\newblock We follow the numbering of the online version
  \url{https://arxiv.org/abs/0809.0415v2}, which differs from the print
  version.

\bibitem[DDT94]{DDT1994}
Henri Darmon, Fred Diamond, and Richard Taylor.
\newblock Fermat's last theorem.
\newblock In {\em Current developments in mathematics, 1995 ({C}ambridge,
  {MA})}, pages 1--154. Int. Press, Cambridge, MA, 1994.

\bibitem[HPWE26]{HPWE}
Catherine Hsu, Alice Pozzi, and Carl Wang-Erickson.
\newblock Universal modularity conjectures for elliptic modular forms:
  {O}bstructions.
\newblock Preprint, 2026.

\bibitem[HWWE23]{part2}
Catherine Hsu, Preston Wake, and Carl Wang-Erickson.
\newblock Explicit non-{G}orenstein {$R={\mathbb{T}}$} via rank bounds {II}:
  {C}omputational aspects.
\newblock {\em Res. Number Theory}, 9(1):\ Paper No. 16, 52 pp., 2023.
\newblock Part of the proceedings of the Fifteenth Algorithmic Number Theory
  Symposium (ANTS-XV).

\bibitem[HWWE25]{part1}
Catherine Hsu, Preston Wake, and Carl Wang-Erickson.
\newblock Explicit non-{G}orenstein {$R={\Bbb {T}}$} via rank bounds {I}:
  {D}eformation theory.
\newblock {\em Selecta Math. (N.S.)}, 31(1):Paper No. 18, 71, 2025.

\bibitem[LMP26]{LMP}
Jaclyn Lang, Katharina M\"uller, and Bharathwaj Palvannan.
\newblock {Modularity theorems for Eisenstein congruences in prime-power
  level}.
\newblock arXiv:2608.20125, 2026.

\bibitem[LPW26]{LPW}
Jaclyn Lang, Robert Pollack, and Preston Wake.
\newblock {Counting level-raising congruences using modular representation
  theory}.
\newblock arXiv:2609.05144, 2026.

\bibitem[Maz77]{mazur1978}
B.~Mazur.
\newblock Modular curves and the {E}isenstein ideal.
\newblock {\em Inst. Hautes \'Etudes Sci. Publ. Math.}, (47):33--186 (1978),
  1977.

\bibitem[Maz89]{mazur1989}
B.~Mazur.
\newblock Deforming {G}alois representations.
\newblock In {\em Galois groups over {${\bf Q}$} ({B}erkeley, {CA}, 1987)},
  volume~16 of {\em Math. Sci. Res. Inst. Publ.}, pages 385--437. Springer, New
  York, 1989.

\bibitem[Oht14]{ohta2014}
Masami Ohta.
\newblock Eisenstein ideals and the rational torsion subgroups of modular
  {J}acobian varieties {II}.
\newblock {\em Tokyo J. Math.}, 37(2):273--318, 2014.

\bibitem[Rib76]{ribet1976}
Kenneth~A. Ribet.
\newblock A modular construction of unramified {$p$}-extensions of
  {$\mathbf{Q}(\mu _{p})$}.
\newblock {\em Invent. Math.}, 34(3):151--162, 1976.

\bibitem[Sea18]{SAGE}
William~A Stein and et~al.
\newblock {\em {S}ageMath, the {S}age {M}athematics {S}oftware {S}ystem}.
\newblock The Sage Development Team, 2018.
\newblock {\tt http://www.sagemath.org}.

\bibitem[SW97]{SW1997}
C.~M. Skinner and A.~J. Wiles.
\newblock Ordinary representations and modular forms.
\newblock {\em Proc. Nat. Acad. Sci. U.S.A.}, 94(20):10520--10527, 1997.

\bibitem[Wak23]{wake2020eisenstein}
Preston Wake.
\newblock The {E}isenstein ideal for weight {$k$} and a {B}loch-{K}ato
  conjecture for tame families.
\newblock {\em J. Eur. Math. Soc. (JEMS)}, 25(7):2815--2861, 2023.

\bibitem[WE20]{CarlA-inf}
Carl Wang-Erickson.
\newblock Presentations of non-commutative deformation rings via
  ${A}_\infty$-algebras and applications to deformations of {G}alois
  representations and pseudorepresentations.
\newblock arXiv:1809.02484v2 [math.NT], 2020.

\bibitem[WWE19]{WWE4}
Preston Wake and Carl Wang-Erickson.
\newblock Deformation conditions for pseudorepresentations.
\newblock {\em Forum Math. Sigma}, 7:e20, 2019.

\bibitem[WWE20]{WWE3}
Preston Wake and Carl Wang-Erickson.
\newblock The rank of {M}azur's {E}isenstein ideal.
\newblock {\em Duke Math. J.}, 169(1):31--115, 2020.

\bibitem[WWE21]{WWE5}
Preston Wake and Carl Wang-Erickson.
\newblock The {E}isenstein ideal with squarefree level.
\newblock {\em Adv. Math.}, 380:Paper No. 107543, 62, 2021.

\bibitem[Yoo19]{yoo2019}
Hwajong Yoo.
\newblock Non-optimal levels of a reducible {$\rm{mod}\,\ell$} modular
  representation.
\newblock {\em Trans. Amer. Math. Soc.}, 371(6):3805--3830, 2019.

\end{thebibliography}

\end{document}